\DeclarePairedDelimiter{\abs}{\lvert}{\rvert}
\DeclarePairedDelimiter{\norm}{\lVert}{\rVert}
\newcommand{\T}{\ensuremath{\mathbb{T}}}
\newcommand*{\R}{\ensuremath{\mathbb{R}}}
\newcommand*{\N}{\ensuremath{\mathbb{N}}}
\newcommand*{\Z}{\ensuremath{\mathbb{Z}}}
\newcommand*{\C}{\ensuremath{\mathbb{C}}}
\renewcommand*{\div}{\ensuremath{\mathrm{div\,}}}
\newcommand*{\tr}{\ensuremath{\mathrm{tr\,}}}
\newcommand*{\Id}{\ensuremath{\mathrm{Id}}}
\newcommand{\eps}{\varepsilon}
\newcommand*{\curl}{\ensuremath{\mathrm{curl\,}}}
\renewcommand*{\P}{\ensuremath{\mathcal{P}}}
\newcommand*{\RR}{\ensuremath{\mathcal{R}}}
\newcommand{\vertiii}[1]{{\left\vert\kern-0.25ex\left\vert\kern-0.25ex\left\vert #1 
    \right\vert\kern-0.25ex\right\vert\kern-0.25ex\right\vert}}
\newtheorem{theorem}{Theorem}[section]
\newtheorem{lemma}[theorem]{Lemma}
\newtheorem{proposition}[theorem]{Proposition}
\newtheorem{corollary}[theorem]{Corollary}
\newtheorem{definition}[theorem]{Definition\rm}
\newtheorem{remark}{Remark}
\begin{document}
\pagenumbering{arabic}
\title[Ill-posedness for ipodissipative Navier--Stokes]{Ill-posedness of Leray solutions for the ipodissipative Navier--Stokes equations}

\author[Colombo]{Maria Colombo}
\address{Institute for Theoretical Studies, ETH Z\"urich,
\\ 
and Institut f\"ur Mathematik, Universit\"at Z\"urich, CH-8057 Z\"urich}
\email{maria.colombo@math.uzh.ch}

\author[De Lellis]{Camillo De Lellis}
\address{Institut f\"ur Mathematik, Universit\"at Z\"urich, CH-8057 Z\"urich}
\email{camillo.delellis@math.uzh.ch}

\author[De Rosa]{Luigi De Rosa}
\address{Institut f\"ur Mathematik, Universit\"at Z\"urich, CH-8057 Z\"urich}

\begin{abstract} 
We prove the ill-posedness of Leray solutions to the Cauchy problem for the ipodissipative Navier--Stokes equations,
when the dissipative term is a fractional Laplacian $(-\Delta)^\alpha$ with exponent $\alpha < \frac{1}{5}$. The proof follows the
``convex integration methods'' introduced by the second author and L\'aszl\'o Sz\'ekelyhidi Jr. for the incomprresible Euler equations.
The methods yield indeed some conclusions even for exponents in the range $[\frac{1}{5}, \frac{1}{2}[$. 
\end{abstract}

\maketitle

\textbf{Keywords:} Navier-Stokes equation, fractional dissipation, Leray solutions, non-uniqueness.

\textbf{2010 Mathematics Subject Classification:} 35Q31 35A01 35D30.

\section{Introduction}

In this paper we consider the ipodissipative Navier--Stokes equations on a periodic $3$-dimensional torus, namely the system
\begin{equation}\label{NS}
\left\{\begin{array}{l}
\partial_t v + \div (v\otimes v) + \nabla p+(-\Delta)^\alpha v =0\\  \hspace{8cm}\text{in } \mathbb{T}^3  \times [0,1] \\
\div v = 0
\end{array}\right.
\end{equation}
where $\alpha \in ]0,1[$ and $- (-\Delta)^\alpha$ is the fractional Laplacian operator, which in Fourier series has the symbol $- |k|^{2\alpha}$:
\[
- (- \Delta)^\alpha f (x) = - \sum_{k\in \mathbb Z^3} |k|^{2\alpha} \hat{f}_k e^{ik\cdot x}\, \qquad \forall f\in \mathcal{D}' (\T^3)\, .
\]
As for the classical Navier--Stokes equations, the celebrated method of Leray can be applied to the Cauchy problem for system \eqref{NS} in order to produce solutions which satisfy a suitable energy inequality. More precisely we have the following theorem.

\begin{theorem}\label{t:leray}
For any $\overline v \in L^2 (\T^3)$ with $\div \overline v=0$ and every $\alpha \in ]0,1[$ there is a weak solution $u\in L^\infty (\R^+, L^2 (\T^3)) \cap L^2 (\R^+, H^\alpha (\T^3))$ of \eqref{NS} such that $v (\cdot, 0) = \overline v$ and 
\begin{equation}\label{e:energy_ineq}
\frac{1}{2}\int_{\T^3} |v|^2 (x, t)\, dx + \int_0^t \int_{\T^3} |(-\Delta)^{\sfrac{\alpha}{2}} v|^2 (x, s)\, dx\, ds \leq \frac{1}{2}\int_{\T^3} |\overline v|^2 (x)\, dx \quad \forall t\geq 0\, .
\end{equation}
\end{theorem}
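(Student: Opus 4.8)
The statement is the hypodissipative counterpart of Leray's classical existence theorem, and the proof is a routine adaptation of his scheme; I only sketch it. The plan is to construct approximate solutions by a Galerkin truncation, derive the energy identity at the approximate level, and pass to the limit via the Aubin--Lions compactness lemma. The only place where the hypothesis $\alpha>0$ (as opposed to $\alpha=0$, the Euler case) is genuinely used is the compactness step, which is also the main — though entirely standard — obstacle.

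\emph{Step 1: Galerkin approximation and energy identity.} For $N\in\N$ let $P_N$ be the $L^2(\T^3)$-orthogonal projection onto the finite-dimensional space of divergence-free trigonometric polynomials with frequencies $|k|\le N$ (on the torus the Leray projection and the frequency truncation commute, so this makes sense), and consider the ODE system
\[
\partial_t v_N + P_N\div(v_N\otimes v_N) + (-\Delta)^\alpha v_N = 0,\qquad \div v_N=0,\qquad v_N(\cdot,0)=P_N\overline v .
\]
Its right-hand side is a quadratic, hence locally Lipschitz, vector field on a finite-dimensional space, so there is a unique local smooth solution $v_N$. Testing with $v_N$, using $P_N v_N=v_N$, the self-adjointness of $P_N$, and $\int_{\T^3} w\cdot\div(w\otimes w)\,dx=0$ for every smooth divergence-free $w$, we get $\tfrac12\tfrac{d}{dt}\|v_N\|_{L^2}^2+\|(-\Delta)^{\sfrac\alpha2}v_N\|_{L^2}^2=0$ and therefore, for all $t$ in the existence interval,
\[
\tfrac12\|v_N(\cdot,t)\|_{L^2(\T^3)}^2+\int_0^t\|(-\Delta)^{\sfrac\alpha2}v_N\|_{L^2(\T^3)}^2\,ds=\tfrac12\|P_N\overline v\|_{L^2(\T^3)}^2\le\tfrac12\|\overline v\|_{L^2(\T^3)}^2 .
\]
In particular the solution is global, $\{v_N\}$ is bounded in $L^\infty(\R^+,L^2(\T^3))$, and (observing that the zero Fourier mode is constant in time and bounded by $\overline v$) also in $L^2_{\mathrm{loc}}(\R^+,H^\alpha(\T^3))$.

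\emph{Step 2: Compactness.} Since $v_N$ is bounded in $L^\infty_t L^2_x$, the tensor $v_N\otimes v_N$ is bounded in $L^\infty_t L^1_x$, so $\div(v_N\otimes v_N)$ is bounded in $L^\infty_t H^{-3}_x$, while $(-\Delta)^\alpha v_N$ is bounded in $L^2_t H^{-\alpha}_x$; hence $\partial_t v_N$ is bounded in $L^2_{\mathrm{loc}}(\R^+,H^{-3}(\T^3))$. Because $\alpha>0$, the embedding $H^\alpha(\T^3)\hookrightarrow L^2(\T^3)$ is compact (finitely many Fourier modes approximate), so the Aubin--Lions--Simon lemma on each interval $[0,T]$ together with a diagonal argument in $T$ yields a subsequence (not relabeled) and a limit $v$ with $v_N\to v$ strongly in $L^2_{\mathrm{loc}}(\R^+,L^2(\T^3))$, $v_N\overset{*}{\rightharpoonup}v$ in $L^\infty_t L^2_x$, $v_N\rightharpoonup v$ in $L^2_{\mathrm{loc},t}H^\alpha_x$, and $\div v=0$. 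Passing to the limit in the weak formulation tested against divergence-free $\varphi\in C^\infty_c([0,\infty)\times\T^3)$ is then immediate: the terms linear in $v_N$ converge by weak convergence (using $\partial_t\varphi,\,(-\Delta)^\alpha\varphi\in L^2_{t,x}$ and $P_N\varphi\to\varphi$ smoothly), the quadratic term converges since $v_N\otimes v_N\to v\otimes v$ in $L^1_{\mathrm{loc},t,x}$ by the strong $L^2$-convergence, and the initial term converges since $P_N\overline v\to\overline v$ in $L^2$. Thus $v$ is a weak solution; moreover $\partial_t v\in L^2_{\mathrm{loc}}(\R^+,H^{-3})$ and $v\in L^\infty_t L^2_x$ give weak continuity of $t\mapsto v(\cdot,t)$ in $L^2(\T^3)$, and testing with $\varphi$ not vanishing at $t=0$ identifies $v(\cdot,0)=\overline v$ (the pressure being recovered a posteriori from $\Delta p=-\div\div(v\otimes v)$).

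\emph{Step 3: Energy inequality.} After a further subsequence we may assume $v_N(\cdot,t)\to v(\cdot,t)$ in $L^2(\T^3)$ for a.e.\ $t\ge0$. Fixing such a $t$ and passing to the $\liminf$ in the identity of Step 1 — using the strong $L^2_x$-convergence at time $t$ for the first term, the weak lower semicontinuity $\int_0^t\|(-\Delta)^{\sfrac\alpha2}v\|^2\le\liminf_N\int_0^t\|(-\Delta)^{\sfrac\alpha2}v_N\|^2$ for the dissipation term, and $\|P_N\overline v\|_{L^2}\le\|\overline v\|_{L^2}$ on the right — yields \eqref{e:energy_ineq} for a.e.\ $t\ge0$. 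Since $t\mapsto\int_0^t\|(-\Delta)^{\sfrac\alpha2}v\|^2$ is continuous and nondecreasing and $t\mapsto\|v(\cdot,t)\|_{L^2}$ is lower semicontinuous (weak $L^2$-continuity in time), letting good times $s\uparrow t$ upgrades the inequality to every $t\ge0$, which completes the proof. The only non-classical ingredient is the compactness of $H^\alpha(\T^3)\hookrightarrow L^2(\T^3)$ for $\alpha>0$, which is precisely where the assumption $\alpha\in{]0,1[}$ is used; everything else is Leray's original argument.
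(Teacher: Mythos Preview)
Your proof is correct and follows the same overall scheme as the paper: Fourier--Galerkin truncation, energy identity at the approximate level, and strong $L^2$ compactness to pass to the limit. The only notable difference is in the compactness step: you invoke the Aubin--Lions--Simon lemma directly (via the bound on $\partial_t v_N$ in a negative Sobolev space and the compact embedding $H^\alpha\hookrightarrow L^2$), whereas the paper carries out the argument by hand, mollifying in space, bounding $\partial_t(w_K*\varphi_\delta)$ using the equation and $L^{\gamma/2}$ bounds on the nonlinearity and pressure, and then applying Ascoli--Arzel\`a to the curves $t\mapsto w_K*\varphi_\delta(\cdot,t)$. Your route is the cleaner textbook version; the paper's is essentially a self-contained proof of the same compactness. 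You also spell out the passage to the energy inequality (a.e.\ convergence plus weak lower semicontinuity, then upgrading to every $t$), which the paper leaves implicit once strong convergence is in hand.
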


For the reader's convenience we will include a proof of Theorem \ref{t:leray} in the appendix.
As usual, the term {\em weak solution} of \eqref{NS} with initial data $\overline v$ is used for any solenoidal vector field $v$ such that
\[
\int_0^\infty \int_{\T^3} [(\partial_t - (-\Delta)^\alpha) \varphi \cdot v + D \varphi : v\otimes v] (x,s) \, dx\, ds
= - \int_{\T^3} \overline v (x)\cdot \varphi (x,0)\, dx
\]
for every smooth test vector field $\varphi \in C^\infty_c (\T^3\times \R, \R^3)$ with $\div \varphi = 0$. Note that $p$ can be recovered uniquely (as a distribution) if we impose that $\int p (x,t)\, dx = 0$. 

It is not difficult to show that any weak solution of \eqref{NS} in $L^\infty (\R^+, L^2 (\T^3)) \cap L^2 (\R^+, H^\alpha (\T^3))$ can be redefined on a set of measure zero so that the map $\R^+\ni t\mapsto v (\cdot, t)\in L^2 (\T^3)$ is weakly continuous. The spatial $L^2$ norm of the solution is thus well defined for every time $t$: \eqref{e:energy_ineq} must be interpreted in a pointwise-in-time sense using the corresponding well defined trace $v (\cdot, t)$. 
As it is the case in Leray's construction for the ``classical'' Navier--Stokes equations, the solution produced by the proof of Theorem \ref{t:leray} can be shown to satisfy an additional form of the energy estimate, namely:
\begin{align}
& \frac{1}{2} \int_{\T^3} |v|^2 (x,t)\, dx + \int_s^t\int_{\T^3} |(-\Delta)^{\sfrac{\alpha}{2}} v|^2 (x, \tau)\, dx\, d\tau \leq \frac{1}{2} \int_{\T^3} |v|^2 (x,s)\, dx\nonumber\\
&\qquad\qquad\qquad\qquad\qquad\qquad\qquad\qquad\qquad\qquad\qquad\qquad\mbox{for a.e. $s$ and $\forall t>s$} \, .\label{e:energy_ineq_2}
\end{align}
From now on, solutions of the Cauchy problem $v (\cdot, 0) = \overline v$ of \eqref{NS} defined on $\mathbb T^3 \times \R^+$ and satisfying \eqref{e:energy_ineq} and \eqref{e:energy_ineq_2} will be called Leray solutions.

\medskip

In this note we show that the ``convex integration'' methods introduced in \cite{DS-Inv} can be used to disprove the uniqueness of Leray's solutions if the exponent $\alpha$ is sufficiently small.

\begin{theorem}\label{t:nonuniq}
Let $\alpha < \frac{1}{5}$. Then there are initial data $\overline v\in L^2 (\T^3)$ with $\div \overline v=0$ for which there exist infinitely many Leray solutions $v$ of \eqref{NS} with $v (\cdot, 0) = \overline v$.
\end{theorem}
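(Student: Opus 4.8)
The plan is to adapt the convex integration scheme of De Lellis--Sz\'ekelyhidi for Euler to the dissipative system \eqref{NS}, treating the fractional Laplacian as a perturbative error term. The starting point is to introduce the \emph{subsolution} (or Reynolds-stress) formulation: a smooth solution of
\begin{equation}\label{e:subsol}
\partial_t v + \div (v\otimes v) + \nabla p + (-\Delta)^\alpha v = \div \mathring{R}, \qquad \div v = 0,
\end{equation}
with $\mathring{R}$ a symmetric trace-free matrix field which is small in some integral norm. First I would fix a nontrivial smooth divergence-free profile $\bar v$ supported on a subinterval of $(0,1)$ (for instance, a time-localized shear-type flow), compute the associated $\mathring{R}$ from \eqref{e:subsol} — here the only new term compared to the Euler case is $(-\Delta)^\alpha v$, which for a fixed smooth $v$ is just another smooth forcing — and verify that $(v,p,\mathring R)$ can be taken arbitrarily small in $L^\infty_t L^2_x$ while still being nonzero. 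Then I would run the iteration producing a sequence $(v_q, p_q, \mathring R_q)$ with $\|\mathring R_q\|_{L^1}\to 0$ and $v_q \to v_\infty$ uniformly in $C^0_t L^2_x$, so that $v_\infty$ is a genuine weak solution of \eqref{NS}.

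The heart of the argument is the perturbation step: given $(v_q, p_q, \mathring R_q)$ one adds a highly oscillatory correction $w_{q+1}$ built from Beltrami flows (Mikado/Beltrami building blocks as in \cite{DS-Inv}), with amplitude tuned to absorb $\mathring R_q$ and frequency $\lambda_{q+1}$ growing super-exponentially. The new Reynolds stress $\mathring R_{q+1}$ collects: (i) a transport/Nash error, (ii) a quadratic interaction error, (iii) the inverse-divergence of the new dissipative term $(-\Delta)^\alpha w_{q+1}$. Terms (i) and (ii) are handled exactly as in the Euler scheme and gain a power of $\lambda_q/\lambda_{q+1}$. The genuinely new term is (iii): since $w_{q+1}$ oscillates at frequency $\lambda_{q+1}$ with amplitude $\sim \delta_{q+1}^{1/2}$, one has $\|(-\Delta)^\alpha w_{q+1}\|_0 \lesssim \lambda_{q+1}^{2\alpha}\delta_{q+1}^{1/2}$, and applying the inverse divergence (which gains one derivative) gives a contribution to $\mathring R_{q+1}$ of size $\lesssim \lambda_{q+1}^{2\alpha -1}\delta_{q+1}^{1/2}$. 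For this to be reconciled with the required smallness $\|\mathring R_{q+1}\|\lesssim \delta_{q+2}$, one needs $\lambda_{q+1}^{2\alpha-1}\delta_{q+1}^{1/2}\ll \delta_{q+2}$; choosing $\delta_q = \lambda_q^{-2\beta}$ and $\lambda_{q+1}=\lambda_q^b$ with $b>1$, this becomes a constraint of the form $2\alpha - 1 - \beta < -\beta b^{?}$, which — after optimizing $b\to 1^+$ and $\beta\to 0^+$ — is satisfiable precisely when $2\alpha - 1 < 0$ with room to spare, and the sharp bookkeeping yields the threshold $\alpha < \tfrac{1}{5}$ quoted in the statement (the exponent $\tfrac15$ coming from balancing this dissipative loss against the $C^0$-regularity $\theta<\tfrac13$ limitation of the scheme, via the interpolation $2\alpha < \theta$-type inequality).

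With $v_\infty$ constructed, the remaining point is to upgrade it to a \emph{Leray} solution, i.e. to check \eqref{e:energy_ineq} and \eqref{e:energy_ineq_2}. This is where one must be careful: convex integration a priori produces only a weak $L^2$ solution, not one in $L^2_t H^\alpha_x$, so the dissipative integral in \eqref{e:energy_ineq} need not even be finite. The strategy is the standard gluing with Leray solutions: one arranges that $v_\infty$ agrees with the fixed smooth subsolution $v$ outside the time interval $(t_0,t_1)\Subset(0,1)$ on which the perturbation is active, and in particular $v_\infty(\cdot,0)=\bar v =: \overline v$ and $v_\infty$ is smooth (hence trivially satisfies the energy identities) near $t=0$; one then checks that $v_\infty$ can be chosen so that at some time $t_1$ its energy has strictly dropped below that of $\bar v$ (this is automatic if the perturbation dissipates energy, which we can impose by shrinking the construction), and for $t>t_1$ one \emph{concatenates} with a genuine Leray solution given by Theorem \ref{t:leray} with initial datum $v_\infty(\cdot,t_1)$. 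Since there are infinitely many admissible choices of building blocks / phases in the iteration producing mutually distinct $v_\infty$ on $(t_0,t_1)$, and each extends to a Leray solution, one obtains infinitely many Leray solutions with the same initial datum $\overline v$. The main obstacle I anticipate is precisely the compatibility of the convex integration error estimate (iii) with the smallness budget when $\alpha$ approaches $\tfrac15$ — i.e. squeezing the parameters $b,\beta$ through the dissipative loss — together with the bookkeeping needed to guarantee the energy \emph{inequality} (not identity) survives the limit with the correct sign, which forces the perturbation to be designed so that it only ever removes energy relative to the subsolution.
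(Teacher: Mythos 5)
Your proposal gets the convex-integration framework right: you set up the Reynolds-stress system, you correctly identify that the only new term in the error is $\RR((-\Delta)^\alpha w_{q+1})$, of size $\sim \delta_{q+1}^{1/2}\lambda_{q+1}^{2\alpha-1}$, and you correctly observe that this closes the scheme only for $\alpha<\tfrac12$. But the step that produces \emph{Leray} solutions — the verification of the energy inequality — is exactly where your sketch has a genuine gap. ``Impose that the perturbation dissipates energy by shrinking the construction'' does not work: the convex-integration solution has violent high-frequency oscillations, so $\int|(-\Delta)^{\alpha/2}v|^2\,dx$ is large, while the energy $\|v(t)\|_{L^2}^2$ is whatever you prescribe; the inequality \eqref{e:energy_ineq_2} compares a (prescribable) time-slope of the energy with a (large) dissipative integral, and shrinking the perturbation shrinks both without obviously favoring the inequality. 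The paper's resolution is quantitative: prescribe energy profiles $e_K$ with a very steep initial slope $e_K'(0)=-2K$, and show (Proposition \ref{p:tecnica}) that the scheme can be run so that $\|v\|_{C^{\alpha+\varepsilon}}\lesssim \max\{E_1^{2\alpha+3\varepsilon},E_2^{(2\alpha+4\varepsilon)/3}\}$ where $E_1\sim\|e\|_{C^1}\sim K$ and $E_2\sim\|e\|_{C^2}\sim K^2$; then $\int|(-\Delta)^{\alpha/2}v|^2\lesssim K^\gamma$ with $\gamma<1$ for $\alpha<\tfrac15$, and for $K$ large this is beaten by the slope $-2K$. That explicit control of the H\"older norm in terms of the $C^1$ and $C^2$ norms of the energy profile, with \emph{sublinear} exponents, is the genuinely new analytic content, and it is missing from your argument.

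Two further points. First, your explanation of the threshold $\tfrac15$ is off: it does not come from an ``interpolation $2\alpha<\theta$'' against the Onsager bound $\theta<\tfrac13$. The BDIS-type scheme used here plateaus at spatial H\"older regularity $\theta<\tfrac15$ (for $\alpha\le\tfrac14$); one needs $v\in C^{\alpha+\varepsilon}$ to control $\|(-\Delta)^{\alpha/2}v\|_{L^2}$, so $\alpha+\varepsilon<\tfrac15$, hence $\alpha<\tfrac15$. Second, your ``glue with a smooth piece on $[0,t_0]$ and branch after $t_0$'' device is a legitimate-sounding route to identical initial data, but it is the approach of \cite{Da,DaSz} and requires substantial extra work to localize the Beltrami scheme in time while keeping the Reynolds error under control; the paper deliberately avoids this and instead obtains $v_e(\cdot,0)$ independent of $e$ simply by restricting to energy profiles with common $e(0)$ and $e'(0)$ (Proposition \ref{p:tecnica}(d)), so that the starting triple $(v_0,p_0,\mathring R_0)$ and every iterative step at $t=0$ are determined once and for all. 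Also note the paper's initial data $\overline v$ is only $C^\beta$ for some $\beta<\tfrac15$, not smooth: if $\overline v$ were smooth, one would have to contend with local uniqueness of strong solutions, which your construction does not address.
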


Indeed, the solutions $v$ constructed in our proof are somewhat stronger in a sufficiently small interval containing the origin. More precisely we prove the following

\begin{theorem}\label{t:locale}
Let $\alpha < \frac{1}{5}$. Then there are initial data $\overline v\in L^2 (\T^3)$ with $\div \overline v =0$ such that
\begin{itemize}
\item[(a)] $\overline v$ belongs to some H\"older space $C^\beta (\T^3)$ for $\alpha < \beta < \frac{1}{5}$;
\item[(b)] there is a positive time $T$  and infinitely many solutions $v \in C^\beta (\T^3 \times [0,T])$ of \eqref{NS} with $v (\cdot, 0) = \overline v$;
\item[(c)] such solutions satisfy the energy inequality \eqref{e:energy_ineq_2} for \em{all times} $0 \leq s\leq t \leq T$.
\end{itemize}
\end{theorem}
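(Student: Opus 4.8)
The plan is to adapt the iterative convex integration scheme of De Lellis--Sz\'ekelyhidi for the Euler equations to the ipodissipative system~\eqref{NS}, treating the fractional Laplacian as a perturbative forcing term. One builds a sequence $(v_q, p_q, \mathring R_q)$ solving the ipodissipative Euler--Reynolds system
\ali{
\partial_t v_q + \div(v_q\otimes v_q) + \nabla p_q + (-\Delta)^\alpha v_q = \div \mathring R_q, \qquad \div v_q = 0,
}
where $v_q$ is a finite trigonometric polynomial in space at frequency $\lesssim \lambda_q$ (with $\lambda_q\to\infty$ super-exponentially, say $\lambda_q = \lceil a^{b^q}\rceil$) and $\mathring R_q$ is a symmetric traceless ``error'' with $\|\mathring R_q\|_0 \lesssim \delta_{q+1} := \lambda_{q+1}^{-2\beta'}$ for a suitable $\beta' > \beta$. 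The inductive step produces $v_{q+1} = v_q + w_{q+1}$, where $w_{q+1}$ is a sum of highly oscillatory Beltrami-type (or Mikado/stationary) flows whose low-frequency amplitudes are chosen, via the usual quadratic algebra on $\mathbb{R}^{3\times 3}_{\mathrm{sym}}$, to cancel $\mathring R_q$ up to a new error $\mathring R_{q+1}$ that is smaller in $C^0$ but lives at higher frequency. The scheme starts from $v_0 \equiv 0$ (or a small fixed shear) on an interval $[0,T]$, and one arranges $\sum_q \|w_{q+1}\|_0 < \infty$ and $\sum_q \|w_{q+1}\|_1 \lambda_{q+1}^{-\beta}<\infty$ (or rather the $C^\beta$ interpolation sums) so that $v_q \to v$ in $C^\beta(\T^3\times[0,T])$, with $\mathring R_q\to 0$, hence $v$ solves~\eqref{NS} on $[0,T]$; running the argument on a shrinking sequence of amplitude scales or with different free parameters at $q=0$ yields infinitely many distinct solutions sharing the same trace $\overline v := v(\cdot,0)$.

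The new feature compared to the Euler case is the extra term $(-\Delta)^\alpha w_{q+1}$ appearing in the definition of $\mathring R_{q+1}$. Since $w_{q+1}$ oscillates at frequency $\lambda_{q+1}$, one has $\|(-\Delta)^\alpha w_{q+1}\|_0 \lesssim \lambda_{q+1}^{2\alpha}\|w_{q+1}\|_0 \sim \lambda_{q+1}^{2\alpha}\delta_{q+1}^{1/2}$; to absorb this into $\mathring R_{q+1}$ one applies the standard anti-divergence (Beltrami/bilinear) operator $\mathcal R$, which gains one derivative, producing a contribution of size $\lesssim \lambda_{q+1}^{2\alpha-1}\delta_{q+1}^{1/2}$ (times logarithmic/geometric losses). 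Comparing with the target $\delta_{q+2}\sim\lambda_{q+2}^{-2\beta}$ and the oscillation/transport errors (which behave as in Euler, $\lesssim \delta_{q+1}\lambda_{q+1}/\lambda_{q+2}$ after the gain), the constraint becomes, schematically, $\lambda_{q+1}^{2\alpha-1}\delta_{q+1}^{1/2}\lesssim \delta_{q+2}$, i.e. $b(2\alpha-1) - b\beta' < -b^2\beta$ after taking logarithms base $\lambda_q$. Optimising over $b>1$ and the gap between $\beta'$, $\beta$ and the regularity produced by the Euler part (which caps at $1/3$), the dissipative term is the bottleneck and forces $2\alpha - 1 < -\frac{4}{5}$ in the most favourable balance, i.e. $\alpha < \frac{1}{5}$; this is exactly where the hypothesis enters. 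So the heart of the proof is the bookkeeping in the $q\to q+1$ estimate showing that with $\beta<\frac15$ and $b$ chosen close to the Euler-optimal value one can simultaneously beat the transport error, the oscillation error, the Nash error, \emph{and} the new dissipative error.

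In more detail, the inductive proposition I would state carries the estimates $\|v_q\|_0 \le 1 - \delta_q^{1/2}$ (or a similar bounded-energy statement), $\|v_q\|_1 \lesssim \delta_q^{1/2}\lambda_q$, $\|\mathring R_q\|_0 \le \eta \delta_{q+1}$, together with an estimate on $\partial_t v_q + v_q\cdot\nabla v_q$ needed to control the transport of the amplitudes along the flow of $v_q$. The construction of $w_{q+1}$ uses a partition of unity in time of size $\mu_q^{-1}$ with $\mu_q$ an intermediate frequency ($\lambda_q \ll \mu_q \ll \lambda_{q+1}$) chosen to balance transport vs. oscillation errors, exactly as in the Euler scheme; the fractional term does not interact with the time cutoffs and only contributes the additional $\mathcal R(-\Delta)^\alpha w_{q+1}$ error analysed above. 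For the H\"older regularity of the limit and hence parts (a) and (b) of the theorem, one uses the telescoping $v = v_0 + \sum_{q\ge 1} w_{q+1}$ together with the interpolation $\|w_{q+1}\|_{C^\beta}\lesssim \|w_{q+1}\|_0^{1-\beta}\|w_{q+1}\|_1^{\beta}\lesssim \delta_{q+1}^{1/2}\lambda_{q+1}^{\beta}$, which is summable precisely because $\delta_{q+1}^{1/2}\lambda_{q+1}^{\beta}\sim\lambda_{q+1}^{\beta-\beta'}$ with $\beta<\beta'$; since $\overline v = v(\cdot,0)$ is the restriction of a $C^\beta$ function, (a) holds as well. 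Part (c), the energy inequality for \emph{all} $0\le s\le t\le T$, is automatic: a $C^\beta$ (indeed $C^0_t L^2_x$) function that solves~\eqref{NS} classically-in-the-weak-sense satisfies the energy \emph{identity} when tested appropriately --- but to be safe, since the convex integration solutions are only $C^\beta$ with $\beta<1/3$ one should either include enough regularity or, more simply, note that \eqref{e:energy_ineq_2} with $\le$ is preserved under the $C^0_tL^2_x$ limit of the $v_q$ because along the scheme one can enforce monotonicity of $\frac12\|v_q(\cdot,t)\|_{L^2}^2 + \int_s^t\|(-\Delta)^{\alpha/2}v_q\|_{L^2}^2$ up to errors tending to zero; choosing the free data at the base step so that $v(\cdot,0)=\overline v$ is the same for every solution in the family, and so that the energies are genuinely different (e.g. by prescribing different energy profiles $e(t)$ on $(0,T]$ à la De Lellis--Sz\'ekelyhidi), gives infinitely many distinct Leray solutions and completes Theorem~\ref{t:locale}; Theorem~\ref{t:nonuniq} then follows by extending each local solution to $\R^+$ via Theorem~\ref{t:leray}. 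The main obstacle, as indicated, is the quantitative balance in the $q\to q+1$ step forcing $\alpha<\frac15$, which requires a careful choice of the parameter $b$ and of the gaps between $\beta$, $\beta'$, and the exponents in $\delta_q$, $\mu_q$.
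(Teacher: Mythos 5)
Your outline of the iteration itself — Navier--Stokes--Reynolds system, highly oscillatory Beltrami-type perturbations, the error term $\mathcal R\big((-\Delta)^\alpha w_{q+1}\big)$ of size $\sim \delta_{q+1}^{1/2}\lambda_{q+1}^{2\alpha-1}$ — matches the paper's Sections 3--7. But there is a conceptual mis-attribution: you claim the $q\to q+1$ bookkeeping on the dissipative error "forces $\alpha<\frac15$." It does not. The iteration closes whenever $2\alpha-1<0$, i.e.\ for every $\alpha<\frac12$; the paper says this explicitly (Theorem \ref{t:main} and the Corollary stated right after the proof of Theorem \ref{t:locale}, which gives infinitely many weak solutions for $\alpha\in[\frac15,\frac12[$). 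The threshold $\frac15$ is entirely an artifact of the \emph{energy} argument in part (c), not of the convex integration step.

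The treatment of (c) is where the real gap lies. The assertion that the energy inequality is "automatic" for a $C^\beta$ solution is false — for $\beta<\frac13$ the nonlinear term need not satisfy any energy balance, and indeed the whole point of the construction is that $\int|v|^2$ is a \emph{prescribed} profile $e(t)$ rather than a conserved quantity. Nor does the paper enforce any monotonicity of the energy functional along the iteration, which is the alternative you suggest. Instead, the paper proves a quantitative H\"older estimate for the limit solution in terms of the derivatives of the energy profile: $\|v_e\|_{C^{\alpha+\varepsilon}}\le C\max\{E_1^{2\alpha+3\varepsilon},E_2^{(2\alpha+4\varepsilon)/3}\}$ where $E_1,E_2$ dominate $\|e\|_{C^1},\|e\|_{C^2}$ (Proposition \ref{p:tecnica}(c)). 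Crucially the exponents $2\alpha+3\varepsilon$ and $\frac{2\alpha+4\varepsilon}{3}$ are strictly less than $\frac12$ when $\alpha<\frac15$ and $\varepsilon$ is small, so that $\int|(-\Delta)^{\alpha/2}v|^2\lesssim\|v\|_{C^{\alpha+\varepsilon}}^2\lesssim K^\gamma$ with $\gamma<1$, where $K\sim\|e\|_{C^1}$. One then designs a family of profiles $e$ with $e(0)=1$, $e'(0)=-2K$ and $e'\le -2K+2$ on $[0,\frac1{4K}]$ (and agreeing to first order at $t=0$ so that the initial data $v(\cdot,0)$ are identical across the family), and lets $K\to\infty$: the prescribed energy drops at rate $\sim K$ while the dissipation only contributes $\sim K^\gamma\ll K$, which yields \eqref{e:energy_ineq_2} for all $0\le s\le t\le\frac1{4K}$. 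The need for $v\in C^{\alpha+\varepsilon}$ with $\alpha+\varepsilon<\frac15$ (the scheme's regularity ceiling from Theorem \ref{t:main}(i)), together with the sub-linearity of the H\"older bound in $K$, is what forces $\alpha<\frac15$. Without this quantitative lemma your argument for (c) does not close, and the appearance of $\frac15$ remains unexplained.
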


Each solution in Theorem \ref{t:locale} can be prolonged past the time $T$ using Theorem \ref{t:leray} (note that \eqref{NS} is invariant under time-shifts and so Theorem \ref{t:leray} is valid with any initial time $T$ substituting $0$): Theorem \ref{t:nonuniq} is thus an obvious corollary. Moreover, the solutions constructed in our proof can be arranged so to violate the energy {\em equality}, namely the inequality in \eqref{e:energy_ineq_2} can be shown to be strict for some times (see Remark \ref{r:stretta}).

The main point of the proof of Theorem \ref{t:locale} is that the methods introduced in \cite{DS-Inv} for the incompressible Euler equations and developed further in the literature (especially in the context of Onsager's conjecture, see \cite{DS-JEMS,Is2013,BDIS,Buck,BDS,IV,DaSz,Is2016,BDSV}) can be adapated to produce infinitely many local solutions satisfying (a), (b) and (c). More specifically, our proof is a simple modification of the one in \cite{BDIS}. As we will see, the type of iteration used in \cite{BDIS} works indeed when the exponent $\alpha$ is smaller than $\frac{1}{2}$, in particular it yields infinitely many weak solutions even in the range $\alpha \in [\frac{1}{5}, \frac{1}{2}[$. In the latter case, however, we are not able to show that such solutions satisfy the corresponding energy inequalities: therefore they are {\em not} Leray solutions. 

In the forthcoming paper \cite{DR} the second author will extend the validity of Theorem \ref{t:locale} to the range of H\"older exponents $]0, \frac{1}{3}[$, combining the ideas of this paper with those of \cite{BDSV} (the latter reference builds on the new techniques introduced in \cite{DaSz,Is2016}, which led Isett in \cite{Is2016} to finally prove the Onsager's conjecture).
However, since the arguments in \cite{DR} will be much longer and more complicated, we hope that the current note will help the interested readers in understanding the simple mechanisms behind the Theorems \ref{t:nonuniq} and \ref{t:locale}, once the convex integration methods are taken for granted.

\medskip

The key starting observation that the addition of a (sufficiently weak) ipodissipative term does not obtstruct the convex integration methods (introduced for the Euler equations) is indeed due to Buckmaster, Shkoller and Vicol in \cite{BSV}, although in a different context. The main difficulties here are:
\begin{itemize}
\item to ensure that the energy condition of Leray's weak solutions can be fullfilled;
\item to ensure that one can impose the same initial data to infinitely many solutions.
\end{itemize}
The first point requires a careful estimate of the H\"older norm of the solutions. The second point has been addressed in wide generality in the papers \cite{Da} and \cite{DaSz} for the Euler equations. Here we solve the issue with a very simple trick, avoiding pages of lengthy arguments.

\medskip

In the remarkable works \cite{JS-Inv,JS} the authors have conjectured (and given strong evidence) that even Leray solutions of the classical Navier--Stokes equations (namely with $\alpha =1$) are not unique. However, the mechanism suggested in \cite{JS} is entirely different from the one exploited here. 

\section{Local ill-posedness}

In this section we outline the main argument leading to Theorem \ref{t:locale}. In fact we will show a somewhat more general result, where the exponent $\alpha$ is taking values in the range $]0, \frac{1}{2}[$. More precisely we will show that

\begin{theorem}\label{t:main}
Assume $e: [0, 1]\to \R$ is a positive smooth function with $\frac{1}{2} \leq e(t) \leq 1$ and $\varepsilon > 0$ a positive number.
For any $\alpha \in ]0,\frac{1}{2}[$  there is a solution $(v,p)\in C^0( \T^3 \times [0, 1]; \R^3\times \R)$ of \eqref{NS}
such that
\begin{equation}\label{e:energy_id}
e(t) = \int_{\mathbb T^3} |v|^2 (x,t)\, dx\qquad\forall t\in [0, 1]\, 
\end{equation}
and
\begin{itemize}
\item[(i)] $v\in C^{\frac{1}{5}-\eps}$, $p\in  C^{\frac{2}{5}-2\eps }$ if $\alpha \leq \frac{1}{4}$,
\item[(ii)] $v\in C^{\frac{1-2\alpha}{3-2\alpha}-\eps}$ and $p\in  C^{2 \frac{1-2\alpha}{3-2\alpha}-2\eps }$ if $\frac{1}{4}< \alpha < \frac{1}{2} \,$.
\end{itemize}  
\end{theorem}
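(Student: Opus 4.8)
The plan is to prove Theorem \ref{t:main} by the Nash–De Lellis–Székelyhidi convex integration scheme, following closely the Euler iteration of \cite{BDIS}, and checking that the extra ipodissipative term $(-\Delta)^\alpha v$ can be absorbed as an additional error as long as $\alpha < \frac12$. The natural setting is the relaxed system, i.e. the Navier--Stokes--Reynolds equations
\begin{equation*}
\partial_t v + \div (v\otimes v) + \nabla p + (-\Delta)^\alpha v = \div \mathring R,\qquad \div v = 0,
\end{equation*}
where $\mathring R$ is a symmetric trace-free tensor. One constructs inductively a sequence $(v_q, p_q, \mathring R_q)$ solving this system with $v_{q+1} = v_q + w_{q+1}$, where $w_{q+1}$ is a highly oscillatory perturbation at frequency $\lambda_{q+1}$ built from Beltrami flows (as in \cite{DS-JEMS, BDIS}), designed so that the new Reynolds stress $\mathring R_{q+1}$ is much smaller than $\mathring R_q$, while $\|v_{q+1}-v_q\|_0 \lesssim \delta_{q+1}^{1/2}$ and $\|v_{q+1}-v_q\|_1 \lesssim \delta_{q+1}^{1/2}\lambda_{q+1}$, with $\delta_q = \lambda_q^{-2\beta}$ and $\lambda_q$ growing super-exponentially. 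The energy profile is prescribed exactly as in \cite{BDIS}: at each step the amplitude of $w_{q+1}$ is tuned so that $\int |v_{q+1}|^2(x,t)\,dx$ matches $e(t)$ up to an error controlled by $\delta_{q+2}$, and in the limit $v = \lim_q v_q$ satisfies \eqref{e:energy_id} identically.

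The key new estimate is the contribution of the dissipative term to the Reynolds stress. After one step, the new error contains (besides the usual transport, oscillation, Nash and mollification errors) a term coming from $(-\Delta)^\alpha w_{q+1}$; writing it as $\div$ of a tensor via the operator $\mathcal R$ (the inverse-divergence operator of \cite{DS-JEMS}), one gains one derivative but loses $2\alpha$ from the fractional Laplacian, so this term is of size roughly $\delta_{q+1}^{1/2}\lambda_{q+1}^{2\alpha-1}$. Since $\alpha < \frac12$ this is a genuinely small quantity as $\lambda_{q+1}\to\infty$, and one checks it is dominated by the other errors provided $\delta_{q+2}\lambda_{q+2}$ controls $\delta_{q+1}^{1/2}\lambda_{q+1}^{2\alpha-1}$ (in addition to the analogous Euler conditions $\delta_{q+2}\lambda_{q+2}\gtrsim \delta_{q+1}^{1/2}\delta_q^{1/2}\lambda_q$, etc.). Balancing all constraints with the ansatz $\delta_q = \lambda_q^{-2\beta}$, $\lambda_{q+1} = \lambda_q^b$ and optimizing over $b>1$ yields the admissible Hölder exponent: in the regime where the dissipative constraint is not binding one recovers the $\frac15$ of \cite{BDIS} (case $\alpha \le \frac14$), while for $\frac14 < \alpha < \frac12$ the dissipative constraint becomes the dominant one and forces $\beta < \frac{1-2\alpha}{3-2\alpha}$, giving case (ii); the pressure gains a factor of two in regularity as usual because $p$ is quadratic in $v$ through a double Riesz transform of $v\otimes v$.

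Concretely I would proceed as follows. First, state the inductive proposition: given $(v_q,p_q,\mathring R_q)$ with the stated bounds on $\|v_q\|_{1}$, $\|\mathring R_q\|_0$ (and suitable bounds on $\partial_t + v_q\cdot\nabla$ applied to $\mathring R_q$), and with $\int|v_q|^2(x,t)\,dx$ close to $e(t)$, produce $(v_{q+1},p_{q+1},\mathring R_{q+1})$ with the same bounds at level $q+1$ and $\|v_{q+1}-v_q\|_0 + \lambda_{q+1}^{-1}\|v_{q+1}-v_q\|_1 \lesssim \delta_{q+1}^{1/2}$. Second, construct $w_{q+1}$: mollify $v_q$ and $\mathring R_q$ at scale $\ell$, pick a finite family of Beltrami modes indexed by the direction of $-\mathring R_\ell/\rho + \Id$ (with $\rho$ chosen from the energy gap $e(t) - \int|v_\ell|^2$), and define $w_{q+1}$ as a sum of such modes with slowly-varying coefficients transported along the flow of $v_\ell$, plus a corrector to keep $\div w_{q+1}=0$. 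Third, compute $\mathring R_{q+1}$ term by term — transport error, oscillation error (where the Beltrami algebra and a stationary-phase/commutator estimate via $\mathcal R$ give the gain), Nash error, mollification error, and the new dissipative error — and verify each is $\le \delta_{q+2}/(\text{const})$ and hence the inductive bound on $\mathring R_{q+1}$ holds after choosing $\ell$ as a suitable power of $\lambda_{q+1}$. Fourth, verify the energy identity in the limit and pass to the limit in $C^\beta$ by summing the geometric-type series $\sum_q \|v_{q+1}-v_q\|_{C^\beta} \lesssim \sum_q \delta_{q+1}^{1/2}\lambda_{q+1}^\beta < \infty$. Finally, the nonuniqueness (infinitely many solutions) follows by the simple trick alluded to in the introduction: one can start the scheme from different $v_0$ (e.g. $v_0 \equiv 0$ versus small but nonzero $v_0$ supported away from $t=0$, or by perturbing the prescribed energy $e$) all sharing the same trace at $t=0$, producing distinct limits.

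The main obstacle I anticipate is \emph{not} the dissipative term itself — which, being a zeroth-order-in-scale perturbation with a favorable $\lambda^{2\alpha-1}$ factor, is harmless for $\alpha<\frac12$ — but rather the bookkeeping needed to push the Hölder exponent all the way to $\frac15$ (resp.\ $\frac{1-2\alpha}{3-2\alpha}$): this requires the sharper, ``two-parameter'' estimates of \cite{BDIS} (controlling material derivatives of $\mathring R_q$, not just $\|\mathring R_q\|_0$, and using the improved $\mathcal R$-estimates), and a careful choice of the many free parameters so that every one of the error terms lands below $\delta_{q+2}$ simultaneously. The second genuine point requiring care is matching the prescribed energy $e(t)$ \emph{exactly} on all of $[0,1]$ (needed for \eqref{e:energy_id} and ultimately for the Leray energy inequality in Theorem \ref{t:locale}), which constrains $\delta_{q+1}^{1/2}$ from below — the amplitude of $w_{q+1}$ cannot be too small — and this lower bound must be shown compatible with the upper bounds coming from the Reynolds-stress estimates; this is exactly the balance that \cite{BDIS} achieves and that survives the addition of the ipodissipation when $\alpha<\frac12$.
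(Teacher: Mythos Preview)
Your plan is correct and matches the paper's approach almost exactly: iterate on the Navier--Stokes--Reynolds system as in \cite{BDIS}, treat the ipodissipative term as an additional Reynolds error $R^6 = \mathcal{R}\bigl((-\Delta)^\alpha w\bigr)$ of order $\delta_{q+1}^{1/2}\lambda_{q+1}^{2\alpha-1}$, and close the scheme with a double-exponential frequency ansatz; the H\"older exponent then comes out as $\vartheta < \frac{1}{2bc}$ once the admissible range for $b,c$ is determined.

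One point of precision worth flagging: the threshold $\alpha = \tfrac14$ and the exponent $\tfrac{1-2\alpha}{3-2\alpha}$ do \emph{not} come from the Reynolds-stress bound on $R^6$ you wrote down (that constraint, $\delta_{q+1}^{1/2}\lambda_{q+1}^{2\alpha-1} \lesssim \delta_{q+2}$, is much weaker and would allow a better exponent). The binding constraint is in the \emph{energy estimate}: when you track $\int |v_q|^2(x,t)\,dx$ between the time-grid points $l\mu^{-1}$ (needed to match $e(t)$), the dissipation contributes a new term $\int_{l/\mu}^t\int |(-\Delta)^{\alpha/2} v_q|^2$, of size $\delta_q^{\alpha}\lambda_q^{2\alpha}/\mu$, and requiring this to be $\lesssim \delta_{q+2}$ forces $c > \tfrac{3-2\alpha}{2(1-2\alpha)}$, whence $\vartheta < \tfrac{1-2\alpha}{3-2\alpha}$. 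The paper remarks on this explicitly (the coarse commutator estimate for $D_t R^6$ is not sharpened precisely because the energy term already obstructs any gain). So your sentence ``the dissipative constraint becomes the dominant one'' is right in spirit but points to the wrong term; make sure the energy computation in your step four accounts for the fractional dissipation of $v_q$ itself, not just of the perturbation $w$. Also, your final paragraph on nonuniqueness is extraneous here: Theorem~\ref{t:main} asserts existence of \emph{one} solution with prescribed energy; the multiplicity (same initial data, different $e$) is the content of Proposition~\ref{p:tecnica} and Theorem~\ref{t:locale}.
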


A crucial point is that the {\em{argument}} producing the pair $(v,p)$ of Theorem \ref{t:main}
gives two additional pieces of information, summarized in the following Proposition.

\begin{proposition}\label{p:tecnica}
Let $E_1, E_2>1$. Assume $\mathscr{E}$ is a family of smooth functions on $[0,1]$ with the property that 
\begin{itemize}
\item[(i)] $\frac{1}{2} \leq e (t) \leq 1$ for every $t$ and every $e\in \mathscr{E}$;
\item[(ii)] $e (0)$ is the same for every $e\in \mathscr{E}$;
\item[(iii)] $e' (0)$ is the same for every $e\in \mathscr{E}$;
\item[(iv)] $\sup_{e\in \mathscr{E}} \|e\|_{C^1} = E_1$;
\item[(v)] $\sup_{e\in \mathscr{E}} \|e\|_{C^2} = E_2$.
\end{itemize}
Then for each $e\in \mathscr{E}$ it is possible to produce a corresponding pair $(v_e, p_e)$ for which the following holds.
\begin{itemize} 
\item[(a)] $(v_e, p_e)$ solves \eqref{NS};
\item[(b)] each $v_e$ satisfies 
\begin{equation}\label{e:identita_E}
e (t) = \int_{\mathbb T^3} |v_e|^2 (x,t)\, dx\qquad\forall t\in [0, 1]\,
\end{equation}
\item[(c)] If $\alpha < \alpha + \varepsilon < \frac{1}{5}$ and $\varepsilon$ is suitably small (depending only upon $\alpha$), then we have
the explicit estimate
\begin{equation}\label{e:chiave}
\|v_e\|_{C^{\alpha+\varepsilon}} \leq C (\alpha, \varepsilon) \max \left\{E_1^{2\alpha+3\varepsilon},  E_2^{\frac{2\alpha +4\varepsilon}{3}}\right\}\, ;
\end{equation}
\item[(d)] The initial data $v_e (\cdot, 0)$ is the same for every $e\in \mathscr{E}$.  
\end{itemize}
\end{proposition}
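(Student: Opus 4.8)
The plan is to run the convex integration scheme of \cite{BDIS} for the incompressible Euler equations, to incorporate the ipodissipative term along the lines of \cite{BSV}, and then to bookkeep two extra pieces of information: how the constants depend on $\|e\|_{C^1}$ and $\|e\|_{C^2}$ (needed for (c)), and the fact that the construction can be arranged to depend on $e$ near $t=0$ only through $e(0)$ and $e'(0)$ (needed for (d)). Concretely, one builds inductively triples $(v_q,p_q,\mathring R_q)$, with $\mathring R_q$ a symmetric trace--free tensor, solving the ``ipodissipative Euler--Reynolds'' system $\partial_t v_q+\div(v_q\otimes v_q)+\nabla p_q+(-\Delta)^\alpha v_q=\div\mathring R_q$, $\div v_q=0$, and satisfying --- with frequencies $\lambda_q=\lceil a^{b^q}\rceil$, amplitudes $\delta_q=\lambda_q^{-2\beta}$, and parameters $a$ large, $b>1$, $\beta>0$ to be fixed --- the estimates $\|v_q\|_{C^1}\lesssim\delta_q^{1/2}\lambda_q$ and $\|\mathring R_q\|_{C^0}\lesssim\delta_{q+1}\lambda_q^{-\eta}$ (for a small $\eta>0$), together with $\tfrac34\,\delta_{q+1}\le e(t)-\int_{\T^3}|v_q|^2(x,t)\,dx\le\tfrac54\,\delta_{q+1}$. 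The increment $w_q=v_{q+1}-v_q$ is, exactly as in \cite{BDIS}, a superposition of Beltrami flows of frequency $\lambda_{q+1}$ transported by the flow of a spatial mollification of $v_q$, with amplitudes $a^{(q)}_k$ built from $\mathring R_q$ and from the energy gap $\rho_q(t)=\tfrac{1}{3(2\pi)^3}\bigl(e(t)(1-\delta_{q+2})-\int|v_q|^2\bigr)$, plus the usual transport and divergence correctors; this gives $\|w_q\|_{C^0}\lesssim\delta_{q+1}^{1/2}$ and $\|w_q\|_{C^{\alpha+\e}}\lesssim\delta_{q+1}^{1/2}\lambda_{q+1}^{\alpha+\e}$.

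The only genuinely new term in $\mathring R_{q+1}$ with respect to \cite{BDIS} is the contribution of $(-\Delta)^\alpha w_q$: applying to it the inverse--divergence operator of \cite{DS-Inv,BDIS} one gains a derivative and obtains something of size $\lesssim\delta_{q+1}^{1/2}\lambda_{q+1}^{2\alpha-1}=\lambda_{q+1}^{2\alpha-1-\beta}$, as in \cite{BSV}, which is absorbed into $\delta_{q+2}\lambda_{q+1}^{-\eta}$ precisely when $\beta(2b-1)\le 1-2\alpha-\eta$. Since $\alpha<\tfrac15$, this is compatible (for a suitable $b$) with $\beta$ arbitrarily close to $\tfrac15$, so all the other estimates of \cite{BDIS} go through unchanged; this is the only place where $\alpha<\tfrac12$ enters, and for $\alpha\in[\tfrac14,\tfrac12)$ --- outside the scope of this Proposition --- the same inequality, after optimizing $b$, is what forces the exponent $\tfrac{1-2\alpha}{3-2\alpha}$ of Theorem~\ref{t:main}. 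Choosing moreover $\beta>\alpha+\e$, the series $\sum_q\|w_q\|_{C^{\alpha+\e}}\lesssim\sum_q\lambda_{q+1}^{\alpha+\e-\beta}$ converges; hence $v_e:=\lim_q v_q\in C^{\alpha+\e}(\T^3\times[0,1])$ and, since $\mathring R_q\to0$ uniformly, the pair $(v_e,p_e)$ solves \eqref{NS} while the energy relation passes to the limit to give \eqref{e:identita_E}. This proves (a) and (b).

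For (c) one notes that the constants in the bounds for $\mathring R_{q+1}$ and for $\|a^{(q)}_k\|_{C^j}$ involve, through $\rho_q$, the $C^1$ and $C^2$ norms of $e$; the only genuinely $e$--sensitive step is $q=0$. Taking $v_0$ to be an $e$--independent time--modulated Beltrami flow one has $\rho_0\propto e(t)(1-\delta_2)$, so that $\|a^{(0)}_k\|_{C^1}\lesssim E_1$ and $\|a^{(0)}_k\|_{C^2}\lesssim E_2+E_1^2$; requiring the inductive estimates at $q=0$ to close forces $a$ (equivalently $\lambda_1$) to be chosen as a fixed power --- depending only on $\alpha$ and $\e$ --- of $\max\{E_1,E_2^{1/3}\}$. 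Since $\delta_1\sim1$, the first increment dominates the telescoping sum, $\|v_e\|_{C^{\alpha+\e}}\lesssim\lambda_1^{\alpha+\e}$, and writing out the exponents yields exactly \eqref{e:chiave}. I expect this last bookkeeping to be the main obstacle: one must carry the dependence on $E_1,E_2$ through every error term of \cite{BDIS} (transport, oscillation and Nash--type errors, plus the energy corrections) while simultaneously respecting $\beta(2b-1)\le 1-2\alpha-\eta$ and $\beta>\alpha+\e$, and then optimize the free parameters so as to land precisely on the exponents $2\alpha+3\e$ and $\tfrac{2\alpha+4\e}{3}$.

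Finally, (d) is obtained by a simple trick: one arranges each $w_q$ to vanish on a time interval $[0,\tau_q]$, with $\tau_q\downarrow0$ chosen independently of $e$. Then $w_0,\dots,w_{q-1}$ all vanish on $[0,\tau_{q-1}]$, so there $v_q\equiv v_0$ and $\mathring R_q\equiv\mathring R_0$, and the construction of $w_q$ on that interval depends on $e$ only through the single scalar function $e(t)-\int_{\T^3}|v_0|^2(x,t)\,dx$; by (ii)--(iii) the latter equals $\ell\, t+O(t^2)$ near $t=0$ with $\ell$ a fixed constant and an $O(t^2)$--remainder controlled by $E_2$, which is exactly what one needs in order to fit it into the admissible windows of the iteration while choosing the $\tau_q$ --- and the frequency schedule near the origin --- independently of $e$. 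Letting $q\to\infty$, every $w_q$ vanishes at $t=0$, so $v_e(\cdot,0)=v_0(\cdot,0)$, which depends on $e$ only through the common value $e(0)$; this proves (d), replacing the lengthy initial--data arguments of \cite{Da,DaSz}.
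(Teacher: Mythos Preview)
Your overall framework for (a)--(b) and your sketch for (c) are essentially the paper's: run the \cite{BDIS} iteration with the extra Reynolds term $\mathcal R((-\Delta)^\alpha w)$, choose the frequency parameter $a$ as a fixed power of $\max\{E_1,E_2^{1/3}\}$ uniformly in $e\in\mathscr E$, and observe that the $C^{\alpha+\varepsilon}$ norm is dominated by the first step. The paper does exactly this, the only cosmetic difference being that it uses a two--parameter frequency scheme $\lambda_q\sim a^{cb^{q+1}}$ with $c>\tfrac52$, takes the starting triple $v_0$ to \emph{depend} on $e$ (it is the explicit Beltrami flow of Lemma~\ref{l:partenza} with amplitude $(e(t)(1-\delta_1))^{1/2}$), and then splits $\|v_e\|_{C^{\alpha+\varepsilon}}\le \|v_e-v_{0}\|_{C^{\alpha+\varepsilon}}+\|v_0\|_{C^{\alpha+\varepsilon}}$ with the first piece bounded by a universal constant and the second estimated by interpolation from the explicit $C^1$ bound \eqref{e:v_1_start}.

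Your argument for (d), however, is not the paper's and, as written, does not close. If every $w_j$ vanishes on $[0,\tau_j]$ with $\tau_j\downarrow 0$, then on $[0,\tau_{q-1}]$ you have $v_q\equiv v_0$ and $\mathring R_q\equiv\mathring R_0$; but then both the inductive energy window $\tfrac34\delta_{q+1}\le e(t)-\int|v_q|^2\le\tfrac54\delta_{q+1}$ and the Reynolds bound $\|\mathring R_q\|_0\lesssim\delta_{q+1}$ fail on that interval for $q\ge 1$, so the next perturbation cannot be defined there (the coefficients $a_{kl}$ are only defined when $|\rho_l^{-1}R_{\ell,l}-\mathrm{Id}|\le r_0$). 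A \cite{Da,DaSz}--style gluing can be made to work, but it needs precisely the ``pages of lengthy arguments'' the paper advertises avoiding.

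The paper's ``simple trick'' is different and much shorter: no time cutoffs are introduced at all. One merely observes that in the construction of Section~\ref{ss:def_w} the cutoffs satisfy $\chi_l(0)=0$ for $l\neq 0$, the flow satisfies $\Phi_0(\cdot,0)=\mathrm{id}$, and $\mathring R_{\ell,0}(\cdot,0)=\mathring R_\ell(\cdot,0)$; hence $w_{q+1}(\cdot,0)$ is an explicit function of $e(0)$, $v_q(\cdot,0)$ and $\mathring R_q(\cdot,0)$ only. Since the explicit starting triple of Lemma~\ref{l:partenza} has $v_0(\cdot,0)$ depending only on $e(0)$ and $\mathring R_0(\cdot,0)$ depending only on $e(0),e'(0)$ --- this is exactly where hypothesis~(iii) enters --- an immediate induction gives that $v_q(\cdot,0)$ and $\mathring R_q(\cdot,0)$ are the same for every $e\in\mathscr E$, and hence so is $v_e(\cdot,0)$. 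This costs nothing and is fully compatible with the inductive estimates.
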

 
Proposition \ref{p:tecnica} easily implies Theorem \ref{t:locale}.

\begin{proof}[Proof of Theorem \ref{t:locale}]
We fix $\alpha < \frac{1}{5}$ and choose $\alpha+\varepsilon \in ]\alpha, \frac{1}{5}[$ so that 
\eqref{e:chiave} holds. Elementary arguments produce for every $K>1$ an  infinite set $\mathscr{E}_K$ of smooth functions $e: [0,1]\to \mathbb R$ with the following properties:
\begin{itemize}
\item[(i)] $\frac{1}{2} \leq e(t) \leq 1$ $\forall t$;
\item[(ii)] $\|e\|_{C^1} \leq 2K+2$;
\item[(iii)] $e (0) =1$ and $e' (0) = -2K$;
\item[(iv)] $e' (t) \leq - 2K +2$ $\forall t \in [0, \frac{1}{4K}]$;
\item[(v)] $\|e\|_{C^2} \leq C K^2$, where $C$ is a geometric constant independent of $K$;
\item[(vi)] for any pair of distinct elements of $\mathscr{E}_K$ there is a sequence of times converging to
$0$ where they take different values.
\end{itemize}
We can now use Proposition \ref{p:tecnica} and for each energy profile $e\in \mathscr{E}_K$ we get a corresponding pair
$(v, p)$ of solutions of \eqref{NS} with \eqref{e:energy_id}. We claim that these solutions satisfy the energy inequality
\begin{equation}\label{e:energia_puntuale}
\frac{1}{2} \int_{\T^3} |v|^2 (x,t)\, dx + \int_s^t \int_{\T^3} |(-\Delta)^{\sfrac{\alpha}{2}} v|^2 (x, \tau)\, dx d\tau \leq \frac{1}{2} \int_{\T^3} |v|^2 (x,s)\, dx
\qquad \forall 0\leq s\leq \tau \leq \frac{1}{4K}\, ,
\end{equation}
provided $\varepsilon$ is chosen first sufficiently small and $K$ is then chosen large enough (depending on the two fixed exponents $\alpha$ and $\alpha + \varepsilon$). Recall that by Proposition \ref{p:tecnica} all such solutions have the same initial data $v (\cdot, 0) = \overline v$. Moreover, by (vi) they are all
distinct on $[0, \frac{1}{4K}]$. 

In order to show \eqref{e:energia_puntuale}, observe that by (iv) and \eqref{e:energy_id} we just need to show that
\begin{equation}\label{e:quasi}
\int_{\T^3} |(-\Delta)^{\sfrac{\alpha}{2}} v|^2 (x, \tau)\, dx \leq K -1 \qquad \forall \tau \in [0, \textstyle{\frac{1}{4K}}]\, .
\end{equation}
On the other hand by Corollary \ref{vlaplv} we have
\[
\int_{\T^3} |(-\Delta)^{\sfrac{\alpha}{2}} v|^2 (x, \tau)\, dx \leq C (\alpha, \varepsilon) \|v\|_{C^{\alpha+\varepsilon}}^2\, .
\]
By (i) we can use the estimate \eqref{e:chiave} and combine it with (ii) and (v) above to conclude
\[
\int_{\T^3} |(-\Delta)^{\sfrac{\alpha}{2}} v|^2 (x, \tau)\, dx \leq C (\alpha, \varepsilon) \max
\left\{K^{4\alpha + 6\varepsilon}, K^{\frac{8\alpha +16\varepsilon}{3}}\right\}\, .
\]
We next fix $\varepsilon$ so small that 
$\gamma := \max \{4\alpha + 6\varepsilon, \frac{8\alpha+16\varepsilon}{3}\} < 1 $.
Hence, we conclude 
\[
\int_{\T^3} |(-\Delta)^{\sfrac{\alpha}{2}} v|^2 (x, \tau)\, dx \leq C (\alpha) K^{\gamma}\, .
\]
Since $\alpha$ is fixed, choosing $K$ large enough we clearly achieve \eqref{e:quasi}.
\end{proof}

\begin{remark}\label{r:stretta}
Clearly, for $K$ large enough we can impose the inequality
\[
\int_{\T^3} |(-\Delta)^{\sfrac{\alpha}{2}} v|^2 (x, \tau)\, dx \leq K -2 \qquad \forall \tau \in [0, \textstyle{\frac{1}{4K}}]\, .
\]
in place of \eqref{e:quasi}, thus showing that the inequality in \eqref{e:energia_puntuale} can be made strict.
\end{remark}

It is worth to note that some conclusion can also be drawn in the range $\alpha \in [\frac{1}{5}, \frac{1}{2}[$. More precisely, the argument given above can be easily modified to prove the following

\begin{corollary}
Let $\alpha \in [\frac{1}{5}, \frac{1}{2}[$. Then there are initial data $\overline v\in C (\mathbb T^3)$ with $\div \overline v=0$ for which there exist infinitely many weak solution $v\in L^\infty ([0, \infty[, L^2 (\T^3))$ of \eqref{NS} with $v (\cdot, 0)=\overline v$.  
\end{corollary}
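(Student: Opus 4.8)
The plan is to run the same construction as in the proof of Theorem \ref{t:locale}, dropping the requirement that the resulting local solutions be Leray solutions (which is precisely the part we cannot establish for $\alpha \geq \frac{1}{5}$), and to keep only the weaker conclusion that we obtain infinitely many weak solutions in $L^\infty$ with a common $L^2$ (indeed continuous) initial datum. Concretely, fix $\alpha\in[\frac{1}{5},\frac{1}{2}[$. By Theorem \ref{t:main} (applied in case (i) if $\alpha\leq\frac{1}{4}$, in case (ii) otherwise), for every admissible energy profile $e$ with $\frac{1}{2}\leq e\leq 1$ there is a continuous solution $(v,p)$ of \eqref{NS} on $\T^3\times[0,1]$ with $\int_{\T^3}|v|^2(x,t)\,dx=e(t)$. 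What we still need, exactly as in Proposition \ref{p:tecnica}, is that the construction can be arranged so that a whole family $\mathscr{E}$ of energy profiles sharing the same value $e(0)$ produces solutions with the same initial datum $v_e(\cdot,0)=\overline v$; this is the ``simple trick'' already used above and it does not depend on $\alpha$ being below $\frac{1}{5}$, so it applies verbatim here.

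The key steps, in order, are: (1) choose an infinite family $\mathscr{E}$ of smooth functions $e\colon[0,1]\to\R$ with $\frac{1}{2}\leq e(t)\leq 1$, all with the same value at $t=0$, and pairwise distinct on every neighbourhood of $0$ — for instance $e_n(t)=\frac{3}{4}+\frac{1}{4}\cos(nt)$ (suitably truncated to stay in $[\frac12,1]$, or rescaled) works, since these agree at $t=0$ but are mutually distinct on any interval $[0,\delta]$; (2) apply the construction of Theorem \ref{t:main}/Proposition \ref{p:tecnica} to each $e\in\mathscr{E}$, obtaining $(v_e,p_e)\in C^0(\T^3\times[0,1])$ solving \eqref{NS} with $\int|v_e|^2=e$ and with $v_e(\cdot,0)=\overline v$ independent of $e$; (3) observe that $\overline v\in C(\T^3)$ and that each $v_e$, being continuous on the compact set $\T^3\times[0,1]$, lies in $L^\infty([0,1],L^2(\T^3))$, and is then extended to $[0,\infty[$ by invoking Theorem \ref{t:leray} with initial time $1$ and initial datum $v_e(\cdot,1)\in L^2(\T^3)$, which keeps the solution in $L^\infty([0,\infty[,L^2(\T^3))$; (4) conclude that the $v_e$ are pairwise distinct, since distinct $e\in\mathscr{E}$ give distinct energy profiles on $[0,1]$ and hence distinct solutions.

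The main obstacle — and the reason the statement is only a Corollary rather than an analogue of Theorem \ref{t:nonuniq} — is that for $\alpha\in[\frac15,\frac12[$ the H\"older regularity furnished by Theorem \ref{t:main} is too weak (roughly $C^{\frac15-\eps}$ or less) to force the dissipation term $\int_{\T^3}|(-\Delta)^{\alpha/2}v|^2$ to be small, so the argument deriving \eqref{e:energia_puntuale} from \eqref{e:chiave} and Remark \ref{r:stretta} breaks down: the exponent $\gamma$ appearing there is no longer $<1$, and we cannot establish the energy inequality \eqref{e:energy_ineq_2}. Consequently these are genuine weak solutions but not Leray solutions, which is all the Corollary claims. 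Everything else in the proof — the choice of $\mathscr{E}$, the common-initial-data mechanism, the continuity and hence $L^\infty L^2$ bound, and the global-in-time extension — goes through without change.
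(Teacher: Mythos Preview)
Your approach is correct and is essentially the modification the paper has in mind: invoke Proposition \ref{p:tecnica}(a),(b),(d) (which go through for all $\alpha\in\,]0,\tfrac12[$, since the common-initial-data mechanism in its proof relies only on the iteration of Proposition \ref{p:iterate} and not on the H\"older estimate (c)), drop the energy-inequality step, and extend past $t=1$ by Theorem \ref{t:leray}.

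There is one small technical slip in your choice of family. Proposition \ref{p:tecnica} requires not only that $e(0)$ be common to all $e\in\mathscr{E}$, but also that $e'(0)$ be common (hypothesis (iii)) and that $\sup_{e\in\mathscr{E}}\|e\|_{C^1}$ and $\sup_{e\in\mathscr{E}}\|e\|_{C^2}$ be finite (hypotheses (iv)--(v)); these uniform bounds are what allow the parameters $a$ and $\bar\lambda$ in the iteration to be chosen independently of $e$, which is exactly what forces the initial data to coincide. Your candidate $e_n(t)=\tfrac34+\tfrac14\cos(nt)$ does have $e_n(0)=1$ and $e_n'(0)=0$ for all $n$, but $\|e_n\|_{C^2}\sim n^2$ is unbounded, so Proposition \ref{p:tecnica} does not apply to this family as stated. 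The fix is trivial: simply reuse the family $\mathscr{E}_K$ from the proof of Theorem \ref{t:locale} for any fixed $K$ (it satisfies all of (i)--(v) and item (vi) gives distinctness), or take any infinite family such as $e_c(t)=\tfrac34+\tfrac{c}{4}t^2(1-t)^2$, $c\in[0,1]$, which is uniformly bounded in $C^2$ with $e_c(0)=\tfrac34$, $e_c'(0)=0$.
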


\section{Main iteration scheme}

The proof of Theorem \ref{t:main} is achieved through an iteration scheme. At each step $q\in \N$ we construct a triple $(v_q, p_q, \mathring{R}_q)$ solving the Fractional Navier-Stokes-Reynolds system:
\begin{equation}\label{fracNSR}
\left\{\begin{array}{l}
\partial_t v_q + \div (v_q\otimes v_q) + \nabla p_q+ (-\Delta)^\alpha v_q =\div\mathring{R}_q\\ \\
\div v_q = 0\, .
\end{array}\right.
\end{equation} 
The $3\times 3$ symmetric traceless tensor $\mathring{R}_q$ is related to the so-called Reynolds stress, a quantity which arises
naturally when considering highly oscillatory solutions of the Euler equations. The scheme will be set up so that
$\mathring{R}_q$ converges uniformly to $0$, whereas the pair $(v_q, p_q)$ converges uniformly to the pair $(v,p)$ of Theorem \ref{t:main}. 

The {\it size} of the perturbation 
$$
w_q:=v_{q}-v_{q-1}
$$
will be measured by two parameters: $\delta_q^{\sfrac12}$ is the {\it amplitude} and $\lambda_q$ the {\it frequency}. More precisely, denoting the (spatial) H\"older norms by $\|\cdot\|_k$ ,
\begin{align}
\|w_{q}\|_0 &\leq M \delta_{q}^{\sfrac{1}{2}}\,,\label{e:v_C0_iter}\\
\|w_{q}\|_1 &\leq M \delta_{q}^{\sfrac{1}{2}} \lambda_{q}\,,\label{e:v_C1_iter}
\end{align}
and similarly,
\begin{align}
\|p_{q}-p_{q-1}\|_0 &\leq M^2 \delta_{q}\,, \label{e:p_C0_iter}\\
\|p_{q}-p_{q-1}\|_1 &\leq M^2 \delta_{q} \lambda_{q}\,,\label{e:p_C1_iter}
\end{align}
where $M$ is a constant depending only on the function $e=e(t)$ (cf.~Section \ref{s:eta-M}), more specifically only upon $\max e$ and $\min e$, which by our assumptions are anyway under control. Thus in the rest of the note $M$ will be treated as a fixed geometric constant.

In constructing the iteration, the new perturbation $w_q$ will be chosen so as to balance the 
previous Reynolds error $\mathring{R}_{q-1}$ in the sense that (cf. equation \eqref{fracNSR}) 
we have $\|w_q\otimes w_q\|_0\sim \|\mathring{R}_{q-1}\|_0$. To make this possible, we then claim inductively the estimates 
\begin{align}
\|\mathring{R}_q\|_0 &\leq \eta \delta_{q+1}\,,\label{e:R_C0_iter}\\
\|\mathring{R}_q\|_1&\leq M \delta_{q+1}\lambda_q\,, \label{e:R_C1_iter}\\
\|\partial_t \mathring{R}_q + v_q \cdot \nabla \mathring{R}_q\|_0 & \leq \delta_{q+1} \delta_q^{\sfrac{1}{2}} \lambda_q\, , \label{e:R_Dt_iter}
\end{align}
where $\eta$ will be a small constant depending indeed only upon $\max e$ and $\min e$ (cf. again Section \ref{s:eta-M}). Thus, similarly to $M$, $\eta$ can be treated as a fixed absolute constant. 

Along the iteration we will have 
\begin{equation}\label{e:delta-lambda}
\delta_q = a^{-b^q}\textrm{ and }\quad \lambda_q \in  \mathbb N \cap [a^{cb^{q+1}}, 2 a^{cb^{q+1}}]\, ,
\end{equation}
where the constants $b$ and $c$ are fixed and satisfy $b>1$ and $c>\frac{5}{2}$, whereas $a$ will be chosen (depending on $b,c$, $\alpha$ and $e$) much larger than $1$. On the one hand \eqref{e:v_C0_iter}, \eqref{e:p_C0_iter} and \eqref{e:R_C0_iter} will imply the convergence of the sequence $v_q$ to a continuous weak solution of \eqref{fracNSR}. On the other hand the precise dependence of $\lambda_q$ on $\delta_q$ will determine the critical H\"older regularity. Finally, the equation \eqref{e:energy_id} will be ensured by 
\begin{equation}\label{e:energy_iter}
\left| e(t) (1-\delta_{q+1}) - \int |v_q|^2 (x,t)\, dx \right| \leq \frac{1}{4} \delta_{q+1} e(t)\,.
\end{equation}

\subsection{The starting triple} In this section we specify the starting triple $(v_0, p_0, \mathring{R}_0)$. 

\begin{lemma}\label{l:partenza}
Fix $M$ and $\eta$ positive constants and
let $\alpha\in ]0, \frac{1}{5}[$. If $a,b$ and $c$ satisfy the following conditions
\begin{equation}\label{e:condizioni_su_abc}
c>\frac{5}{2}, b>1, a^{(c-1)b-\frac{1}{2}} \geq C_0 \|e\|_{C^1}\quad \mbox{and}\quad
a^{(2c-1)b -1} \geq C_0 \|e\|_{C^2}
\end{equation}
(where $C_0$ is a suitable geometric constant, depending only upon $M$ and $\eta$),
then there is a triple $(v_0, p_0, \mathring{R}_0)$ satisfying \eqref{fracNSR}, \eqref{e:R_C0_iter}, \eqref{e:R_C1_iter},
\eqref{e:R_Dt_iter} and
\begin{align}
\|v_0\|_0 &\leq M\label{e:v_start},\\
\|v_0\|_1 &\leq \min \left\{ C_0 \max \left\{a^{\frac{b}{1-2\alpha}}, a^b \|e\|_{C^1}, \|e\|_{C^2}^{\frac{cb - 1/2}{(2c-1)b -1}}\right\},  M \delta_0^{\sfrac{1}{2}} \lambda_0\right\}, \label{e:v_1_start}\\
\|p_0\|_0 &\leq M^2, \label{e:p_start}\\
\|p_0\|_1 &\leq M^2 \delta_0 \lambda_0^2\label{e:p_1_start}\, .
\end{align}
For $\alpha \in [\frac{1}{5}, 1[$ there is a starting triple satisfying all the above estimates with
\begin{equation}\label{e:v_1_start_alternativa}
\|v_0\|_1 \leq M \delta_0^{\sfrac{1}{2}} \lambda_0\, 
\end{equation}
in place of \eqref{e:v_1_start},
provided $c>\max \{\frac{5}{2}, \frac{3-2\alpha}{2(1-2\alpha)}\}$, $b>1$ and $a$ is chosen large enough depending only upon $\|e\|_{C^2}$, $\alpha$, $b$ and $c$
\end{lemma}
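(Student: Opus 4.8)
The plan is to take $v_0$ to be the simplest divergence-free shear flow, with amplitude tuned to the prescribed energy at level $q=0$, and to absorb the resulting error into $\mathring{R}_0$ by an \emph{explicit} inverse divergence. With $e_1,e_3$ the standard basis vectors and a frequency $\mu\in\N$ to be chosen, I would set
\[
v_0(x,t):=\gamma(t)\sin(\mu x_3)\,e_1,\qquad p_0:=0,
\]
\[
\mathring{R}_0(x,t):=-\frac{\gamma'(t)+\mu^{2\alpha}\gamma(t)}{\mu}\,\cos(\mu x_3)\,\big(e_1\otimes e_3+e_3\otimes e_1\big),
\]
where $\gamma$ is chosen so that $\int_{\T^3}|v_0|^2(\cdot,t)=e(t)(1-\delta_1)$, i.e. $\gamma(t)=\big(2(2\pi)^{-3}e(t)(1-\delta_1)\big)^{1/2}$. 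First I would record the elementary facts: $\div v_0=0$; \eqref{e:energy_iter} at $q=0$ holds with equality; since $\tfrac12\le e\le 1$ and $\delta_1=a^{-b}<\tfrac12$ for $a$ large, one has $|\gamma|\sim 1$, $|\gamma'|\lesssim\|e\|_{C^1}$ and $|\gamma''|\lesssim\|e\|_{C^2}+\|e\|_{C^1}^2$ with geometric constants (this is the only place where $M$ must depend on $\max e,\min e$). The key algebraic simplification is that for a shear flow the nonlinearity vanishes identically, $\div(v_0\otimes v_0)=(v_0\cdot\nabla)v_0=0$, whereas $(-\Delta)^\alpha v_0=\mu^{2\alpha}v_0$ because $\sin(\mu x_3)$ is a single-frequency field; since $\mathring{R}_0$ is symmetric, traceless and satisfies $\div\mathring{R}_0=(\gamma'+\mu^{2\alpha}\gamma)\sin(\mu x_3)\,e_1=\partial_tv_0+(-\Delta)^\alpha v_0$, the triple solves \eqref{fracNSR}. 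The estimates \eqref{e:p_start}, \eqref{e:p_1_start} are trivial, and \eqref{e:v_start} is $\|v_0\|_0=\|\gamma\|_0\le M$.

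Next I would fix $\mu$ and check the Reynolds estimates. From the explicit formulas,
\[
\|\mathring{R}_0\|_0\lesssim\frac{\|e\|_{C^1}}{\mu}+\mu^{2\alpha-1},\qquad\|\mathring{R}_0\|_1\lesssim\|e\|_{C^1}+\mu^{2\alpha},
\]
\[
\|\partial_t\mathring{R}_0+v_0\cdot\nabla\mathring{R}_0\|_0=\|\partial_t\mathring{R}_0\|_0\lesssim\frac{\|e\|_{C^2}+\|e\|_{C^1}^2}{\mu}+\|e\|_{C^1}\,\mu^{2\alpha-1},
\]
the material derivative collapsing because $v_0\cdot\nabla\mathring{R}_0=\gamma\sin(\mu x_3)\,\partial_{x_1}\mathring{R}_0=0$. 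Recalling from \eqref{e:delta-lambda} that $\delta_1\sim a^{-b}$, $\delta_0^{1/2}\sim a^{-1/2}$ and $\lambda_0\sim a^{cb}$, comparison with the right-hand sides $\eta\delta_1$, $M\delta_1\lambda_0$, $\delta_1\delta_0^{1/2}\lambda_0$ of \eqref{e:R_C0_iter}--\eqref{e:R_Dt_iter} forces the three lower bounds $\mu\gtrsim a^{b/(1-2\alpha)}$ (to kill $\mu^{2\alpha-1}$ against $\delta_1$), $\mu\gtrsim a^b\|e\|_{C^1}$ and $\mu\gtrsim\|e\|_{C^2}\,a^{1/2-(c-1)b}$; the term $\|e\|_{C^1}^2/\mu$ is then automatically absorbed by the second bound. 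Accordingly I take $\mu$ to be the smallest integer exceeding $C_0\max\{a^{b/(1-2\alpha)},\,a^b\|e\|_{C^1},\,\|e\|_{C^2}a^{1/2-(c-1)b}\}$, with $C_0$ a large geometric constant (depending on $M,\eta$); conditions \eqref{e:condizioni_su_abc}, together with $\tfrac{b}{1-2\alpha}<cb-\tfrac12$ (valid for $c>\tfrac52$, $b>1$, $\alpha<\tfrac15$), then make the $C_0$'s cancel and give $\mu\lesssim a^{cb-1/2}$, whence also $\mu^{2\alpha}\lesssim a^{(c-1)b}$ and $\|e\|_{C^1}\mu^{2\alpha-1}\lesssim\delta_1\delta_0^{1/2}\lambda_0$ (again by $c>\tfrac52$, $\alpha<\tfrac15$). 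The estimates \eqref{e:R_C0_iter}--\eqref{e:R_Dt_iter} now follow by the power counting just outlined, \eqref{e:condizioni_su_abc} being used to turn each bound ``$\|e\|_{C^j}\lesssim$ power of $a$'' into a genuine inequality for $a$ large.

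It remains to verify \eqref{e:v_1_start}. Since $\|v_0\|_1\le(1+\mu)\|\gamma\|_0\lesssim\mu$ and, by \eqref{e:condizioni_su_abc}, $\|e\|_{C^2}a^{1/2-(c-1)b}\le\|e\|_{C^2}^{(cb-1/2)/((2c-1)b-1)}$, the first bound $\|v_0\|_1\le C_0\max\{\ldots\}$ is immediate (the amplitude $\|\gamma\|_0$ being small enough to absorb the rounding and the constant); the second follows from $\mu\lesssim a^{cb-1/2}$ and $M\delta_0^{1/2}\lambda_0\ge Ma^{cb-1/2}$, for $M$ at least a fixed geometric constant. This proves the first part. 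For $\alpha\in[\tfrac15,\tfrac12[$ I would keep the identical triple and ask only for \eqref{e:v_1_start_alternativa}: the sole use of $\alpha<\tfrac15$ above was to guarantee $\tfrac{b}{1-2\alpha}<cb-\tfrac12$, which for $b>1$ persists under the weaker requirement $c>\tfrac1{1-2\alpha}+\tfrac12=\tfrac{3-2\alpha}{2(1-2\alpha)}$; all the other estimates are unchanged, with $a$ chosen large depending only on $\|e\|_{C^2},\alpha,b,c$.

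The genuine obstacle is the choice of the frequency $\mu$: the three Reynolds estimates push $\mu$ upward, since the dissipative term $\mu^{2\alpha}v_0$ and the time derivative $\gamma'v_0$ must become negligible after a single inverse divergence, whereas the bound $\|v_0\|_1\le M\delta_0^{1/2}\lambda_0$ demanded by the iteration scheme pushes $\mu$ downward; the hypotheses \eqref{e:condizioni_su_abc} on $a,b,c$, together with $c>\tfrac52$ and the range restriction on $\alpha$, are precisely what makes an admissible integer $\mu$ available in this window. Carrying out this compatibility check, essentially a bookkeeping of powers of $a$, is the core of the argument.
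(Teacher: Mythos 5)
Your proof is correct and follows essentially the same route as the paper's: both choose a space--time constant-frequency ansatz with amplitude tuned so that $\int|v_0|^2 = e(t)(1-\delta_1)$, observe that the nonlinearity vanishes identically and that $v_0\cdot\nabla\mathring{R}_0 = 0$, write $\mathring{R}_0$ as an explicit inverse divergence of $\partial_t v_0 + (-\Delta)^\alpha v_0$, and then fix the free frequency as the maximum of the lower bounds forced by the three Reynolds estimates, verifying compatibility with the $C^1$ upper bound via \eqref{e:condizioni_su_abc}. The only (cosmetic) difference is the ansatz: the paper takes the two-component flow $(e(t)(1-\delta_1))^{1/2}(2\pi)^{-3/2}(\cos\bar\lambda x_3,\sin\bar\lambda x_3,0)$, whose modulus is spatially constant so that the energy identity holds without averaging a $\sin^2$ factor, whereas you use the single-component shear $\gamma(t)\sin(\mu x_3)e_1$ with the extra $\sqrt{2}$ in $\gamma$ to compensate the average of $\sin^2$. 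Both yield $(-\Delta)^\alpha v_0 = \bar\lambda^{2\alpha}v_0$ and the same power counting; your observation that the $\|e\|_{C^1}^2/\mu$ term is dominated by the $a^b\|e\|_{C^1}$ constraint is indeed how the issue is resolved (the paper elides it but the same mechanism applies).
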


\begin{proof} In the rest of the proof we will use the notation $C_0$ for constants which are independent of any parameter (but might depend on the constants $M$ and $\eta$)
. We only check the case $\alpha < \frac{1}{5}$, since indeed the other case is much simpler. 

Observe that $\delta_1 = a^{-b} <1$. We define $p_0=0$, 
\[
v_0(x,t)=\frac{1}{(2\pi)^{\sfrac{3}{2}}}(e(t)(1-\delta_1))^{\sfrac{1}{2}}(\cos \bar \lambda x_3,\sin \bar \lambda x_3,0)
\] 
and $\mathring{R}_0=\mathring{R}_{0,1}+\mathring{R}_{0,2}$, where
\[
\mathring{R}_{0,1}(x,t)=\frac{1}{(2\pi)^{\sfrac{3}{2}}} \bar \lambda ^{-1} \frac{d}{dt}(e(t)(1-\delta_1))^{\sfrac{1}{2}}\begin{bmatrix} 0& 0& \sin \bar \lambda x_3 \\ 0&
0 & -\cos \bar \lambda x_3  \\ \sin \bar \lambda x_3  &-\cos \bar \lambda x_3  &
0\end{bmatrix}
\]
\[
\mathring{R}_{0,2}(x,t)= \frac{1}{(2\pi)^{\sfrac{3}{2}}}\bar \lambda ^{-1+2\alpha} (e(t)(1-\delta_1))^{\sfrac{1}{2}}\begin{bmatrix} 0& 0& \sin \bar \lambda x_3 \\ 0&
0 & -\cos \bar \lambda x_3  \\ \sin \bar \lambda x_3  &-\cos \bar \lambda x_3  &
0\end{bmatrix}
\]
and $\bar \lambda$ is an integer whose choice will be specified later.

\medskip

Note that \eqref{e:p_start} and \eqref{e:p_1_start} are trivial, whereas \eqref{fracNSR} can be easily checked. We now come to the other estimates. 

\medskip

{\bf Proof of \eqref{e:R_C0_iter}.} We require separately $\|\mathring{R}_{0,1}\|_0 \leq \frac{\eta}{2} \delta_1$
and $\|\mathring{R}_{0,2}\|_0 \leq \frac{\eta}{2} \delta_1$. These two estimates are certainly satisfied
provided
\begin{align}
\bar\lambda &\geq C_0 \|e\|_{C^1}\delta_1^{-1} \label{e:una}\\
\bar\lambda &\geq C_0 \delta_1^{-\frac{1}{1-2\alpha}}\label{e:due}\, .
\end{align}

\medskip

{\bf Proof of \eqref{e:R_C1_iter}.} We require separately $\|\mathring{R}_{0,1}\|_1 \leq \frac{M}{2} \delta_1 \lambda_0$ and 
$\|\mathring{R}_{0,2}\|_1 \leq \frac{M}{2} \delta_1 \lambda_0$. These are certainly satisfied if
\begin{align}
\delta_1 \lambda_0 &\geq C_0 \|e\|_{C^1}\label{e:tre}\\
\delta_1 \lambda_0 &\geq C_0 \bar\lambda^{2\alpha}\label{e:quattro}\, .
\end{align}

\medskip

{\bf Proof of \eqref{e:R_Dt_iter}.} Observe that $(v_0\cdot \nabla)\mathring{R}_0 =0$. Thus it suffices to require the two estimates $\|\partial_t \mathring{R}_{0,1}\|_0 \leq \frac{1}{2} \delta_1 \delta_0^{\sfrac{1}{2}} \lambda_0$ and
$\|\partial_t \mathring{R}_{0,2}\|_0 \leq \frac{1}{2} \delta_1 \delta_0^{\sfrac{1}{2}} \lambda_0$. These are certainly achieved if we impose
\begin{align}
\bar \lambda &\geq C_0 \|e\|_{C^2} (\delta_1 \delta_0^{\sfrac{1}{2}} \lambda_0)^{-1}\label{e:cinque}\\
\bar \lambda&\geq C_0 \left(\|e\|_{C^1} (\delta_1 \delta_0^{\sfrac{1}{2}} \lambda_0)^{-1}\right)^{\frac{1}{1-2\alpha}}
\label{e:sei}
\end{align}

\medskip

{\bf Conclusion.} \eqref{e:v_start} is obvious since $\|v_0\|_0 \leq 1$ and $M\geq 1$. The inequality \eqref{e:v_1_start} will be split into two conditions. One is
\begin{equation}\label{e:sette}
 \bar \lambda \leq \delta_0^{\sfrac{1}{2}} \lambda_0\, ,
\end{equation}
whereas the other one is
\begin{equation}\label{e:otto}
\bar\lambda \leq C_0 \max \left\{a^{\frac{b}{1-2\alpha}}, a^b \|e\|_{C^1}, \|e\|_{C^2}^{\frac{cb - 1/2}{(2c-1)b -1}}\right\}\, .
\end{equation}
The conditions \eqref{e:una}, \eqref{e:cinque} and \eqref{e:sei} determine the choice of $\bar\lambda$, which we fix to be just the maximum of all the right hand sides of the respective conditions. In fact, given the definition of $\delta_q$'s and $\lambda_q$'s, we just have
\[
\bar\lambda = C_0 \max \left\{ a^{\frac{b}{1-2\alpha}}, a^b \|e\|_{C^1}, \|e\|_{C^1}^{\frac{1}{1-2\alpha}}
a^{- ((c-1)b -1/2)/(1-2\alpha)}, \|e\|_{C^2} a^{- (c-1) b + 1/2}\right\}\, .
\]
We next need to check that, given the inequalities required on $a$, the conditions \eqref{e:tre}, \eqref{e:quattro}, \eqref{e:sette} and \eqref{e:otto} are satisfied. First of all, notice that \eqref{e:tre} is satisfied because it is equivalent to
\[
a^{(c-1)b} \geq C_0 \|e\|_{C^1}\, ,
\]
which is satisfied by \eqref{e:condizioni_su_abc}. The latter inequality shows easily that 
\[
a^b \|e\|_{C^1} \geq \|e\|_{C^1}^{\frac{1}{1-2\alpha}}
a^{- ((c-1)b -1/2)/(1-2\alpha)}\, ,
\]
so that we can simplify the definition of $\bar \lambda$ to
\begin{equation}\label{e:barlambda}
\bar\lambda = C_0 \max \left\{ a^{\frac{b}{1-2\alpha}}, a^b \|e\|_{C^1}, \|e\|_{C^2} a^{- (c-1) b + 1/2}\right\}
\end{equation}
Next, we check that \eqref{e:quattro} holds, which amount to check separately that
\begin{align}
a^{(c-1) b} &\geq a^{\frac{2\alpha b}{1-2\alpha}}\\
a^{(c-1) b} &\geq a^{2\alpha b}\|e\|_{C^1}^{2\alpha}\\
a^{(c-1) b} &\geq a^{-2\alpha (c-1)b +\alpha} \|e\|_{C^2}^{2\alpha}\, .
\end{align}
Now, the first inequality is obvious because 
\[
(c-1) > \frac{3}{2} = \frac{\frac{2}{5}}{1-\frac{2}{5}} > \frac{2\alpha}{1-2\alpha}\, .
\]
The second inequality is equivalent to
\[
a^{(c-1-2\alpha) b} \geq \|e\|_{C^1}^{2\alpha}\, ,
\]
which is implied by $\|e\|_{C^1} \leq a^{(c-1)b}$ (to pass from one to the other we again use $(c-1) \geq \frac{2\alpha}{1-2\alpha}$). 
The third inequality is implied by 
\[
a^{(c-1)(1+2\alpha) b - \alpha} \geq \|e\|_{C^2}^{2\alpha}\, ,
\]
which is indeed guaranteed by \eqref{e:condizioni_su_abc}, because $(c-1)(1+2\alpha)b /2\alpha - \frac{1}{2}
\geq (2c-1)b -1$. 

We next check \eqref{e:sette}. The latter is equivalent to
\begin{align}
a^{cb-\sfrac{1}{2}} \geq C_0 a^{\frac{b}{1-2\alpha}}\\
a^{cb-\sfrac{1}{2}} \geq C_0 a^b\|e\|_{C^1}\\
a^{cb-\sfrac{1}{2}} \geq C_0 a^{-(c-1)b+\sfrac{1}{2}} \|e\|_{C^2}\, .
\end{align}
The first one is trivially implied by $\alpha \leq \frac{1}{5}$ and $c\geq \frac{5}{2}$. The second is equivalent to
$a^{(c-1)b-\frac{1}{2}} \geq C_0 \|e\|_{C^1}$, which is indeed in \eqref{e:condizioni_su_abc}. The last one is equivalent to
\[
a^{(2c-1)b -1}\geq C_0 \|e\|_{C^2}\, .
\]
Inserting the latter inequality into \eqref{e:barlambda} we achieve \eqref{e:otto}, which completes the proof.
\end{proof}

\subsection{The main iteration and the proof of Theorem \ref{t:main}}

Given  the triple $(v_0,p_0,\mathring{R}_0)$ provided by Lemma \ref{l:partenza} we will construct inductively new triples $(v_q,p_q,\mathring{R}_q)$, assuming the estimates \eqref{e:v_C0_iter}-\eqref{e:R_Dt_iter}. Such iterative scheme will then lead to the following Proposition.

\begin{proposition}\label{p:iterate}
There are positive constants $M\geq 1$, $\eta>0$ and $C_0$ such that the following holds.
\begin{itemize}
\item[$\bullet$] Assume  $\alpha<\frac{1}{5}$ and $a,b$ and $c$ satisfy 
\begin{align}
& c>\frac{5}{2}, b>1\quad \mbox{and}\quad  a\geq \max \left\{ a_0 (b,c), C_0 \|e\|_{C^1},
C_0 \|e\|_{C^2}^{\frac{1}{(2c-1)b-1}}\right\}\, ,\label{e:condizioni_su_abc_2}
\end{align}
where $a_0$ depends only upon $b$ and $c$. Then there is a sequence
$(v_q, p_q, \mathring{R}_q)$ starting with the $(v_0, p_0, \mathring{R}_0)$ of Lemma \ref{l:partenza},
solving \eqref{NS} and satisfying the estimates \eqref{e:v_C0_iter}-\eqref{e:R_Dt_iter}, where 
$\delta_q$ and $\lambda_q$ are as in \eqref{e:delta-lambda}. 
\item[$\bullet$]If $\alpha \in [\frac{1}{5},\frac{1}{2}[$. then the same as above holds if
$c>\max \{\frac{5}{2}, \frac{3-2\alpha}{2(1-2\alpha)}\}$, $b>1$ and $a$ is chosen large enough depending only upon $\|e\|_{C^2}$, $\alpha$, $b$ and $c$.
\end{itemize} 
In addition we claim the estimates
\begin{align}
\|\partial_t (v_q-v_{q-1})\|_0 \leq C \delta_q^{\sfrac{1}{2}}\lambda_q \qquad\mbox{and}\qquad
\|\partial_t (p_q-p_{q-1})\|_0\leq C \delta_q \lambda_q\label{e:t_derivatives}\, .
\end{align}
\end{proposition}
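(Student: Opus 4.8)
The plan is to prove Proposition~\ref{p:iterate} by induction on $q$. The case $q=0$ is exactly Lemma~\ref{l:partenza}, so the content is the \emph{inductive step}: assuming $(v_q,p_q,\mathring R_q)$ solves the Navier--Stokes--Reynolds system \eqref{fracNSR} and satisfies \eqref{e:v_C0_iter}--\eqref{e:R_Dt_iter}, \eqref{e:energy_iter} and \eqref{e:t_derivatives} at level $q$, we must produce $(v_{q+1},p_{q+1},\mathring R_{q+1})$ with the same estimates at level $q+1$, under \eqref{e:condizioni_su_abc_2} (resp. the enlarged range of $c$ when $\alpha\ge\tfrac15$). The perturbation $w_{q+1}:=v_{q+1}-v_q$ is built by the convex-integration scheme of \cite{DS-Inv} in the quantitative form of \cite{BDIS}: one mollifies $(v_q,\mathring R_q)$ in space and time at a scale $\ell$ (of the order $\lambda_q^{-1}$ up to small corrections, to be fixed at the end), producing the \emph{mollification errors}; one picks a scalar amplitude $\rho=\rho(t)\sim\delta_{q+1}$ dictated by the energy and a partition of unity in time adapted to the coarse flow of the mollified field $v_\ell$; using the geometric ("Beltrami") lemma one writes $\rho\,\mathrm{Id}-\mathring R_\ell=\sum_I\rho\,a_I^2\,B_I\otimes B_I$ as a finite sum of squares of Beltrami modes; and finally $w_{q+1}$ is taken to be a superposition $\sum_I\rho^{1/2}a_I(x,t)\,B_I\,e^{i\lambda_{q+1}\Phi_I(x,t)}$ of fast Beltrami plane waves transported along the flow, plus a divergence-free corrector of lower order. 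The amplitude $\rho$ is selected so that $\int_{\T^3}|w_{q+1}|^2(x,t)\,dx\approx e(t)(\delta_{q+1}-\delta_{q+2})$, which is exactly what upgrades \eqref{e:energy_iter} from $q$ to $q+1$; the hypotheses $\tfrac12\le e\le1$ and the bounds on $\|e\|_{C^1},\|e\|_{C^2}$ guarantee that $\rho$ is comparable to $\delta_{q+1}$ and has controlled first and second derivatives, so that $w_{q+1}$ obeys \eqref{e:v_C0_iter}, \eqref{e:v_C1_iter} and (by differentiating the explicit formula) \eqref{e:t_derivatives}; the pressure correction $p_{q+1}-p_q$ is then recovered from the momentum equation and satisfies \eqref{e:p_C0_iter}, \eqref{e:p_C1_iter} by the same computations as in \cite{BDIS}.

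The new Reynolds stress is \emph{defined} by the algebraic identity obtained from plugging $v_{q+1}=v_q+w_{q+1}$ into \eqref{fracNSR}: $\div\mathring R_{q+1}$ equals the sum of the usual terms --- the transport error, the oscillation error, the Nash error and the mollification errors --- \emph{plus} the one genuinely new term, the \emph{dissipative error} $(-\Delta)^\alpha w_{q+1}$ coming from the fractional Laplacian. One sets $\mathring R_{q+1}:=\mathcal R(\cdots)$, with $\mathcal R$ the symmetric-traceless inverse-divergence operator of \cite{DS-Inv} which gains one derivative. The classical terms are estimated exactly as in \cite{BDIS}, using the inductive bounds \eqref{e:v_C0_iter}--\eqref{e:R_Dt_iter}, the stationary-phase and commutator estimates, and the bounds on $a_I,\Phi_I$; they contribute to $\|\mathring R_{q+1}\|_0$, $\|\mathring R_{q+1}\|_1$ and $\|(\partial_t+v_{q+1}\cdot\nabla)\mathring R_{q+1}\|_0$ terms of sizes $\eta\delta_{q+2}$, $M\delta_{q+2}\lambda_{q+1}$ and $\delta_{q+2}\delta_{q+1}^{1/2}\lambda_{q+1}$ respectively --- the material-derivative estimate \eqref{e:R_Dt_iter} being propagated by the transport estimates along the flow of $v_\ell$, which is where \eqref{e:t_derivatives} at level $q$ enters to control $\partial_t v_q$. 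For the dissipative error one needs the key fact, made rigorous via a Littlewood--Paley decomposition (commuting $(-\Delta)^\alpha$ past the slowly varying amplitudes, in the spirit of \cite{BSV}), that $(-\Delta)^\alpha$ acting on a function oscillating at frequency $\lambda_{q+1}$ costs a factor $\lambda_{q+1}^{2\alpha}$; combined with the one-derivative gain of $\mathcal R$ this gives $\|\mathcal R((-\Delta)^\alpha w_{q+1})\|_0\lesssim\lambda_{q+1}^{2\alpha-1}\delta_{q+1}^{1/2}$, with the corresponding $\lambda_{q+1}^{2\alpha}$-loss in the $C^1$ and material-derivative bounds.

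Finally one carries out the parameter bookkeeping. Substituting \eqref{e:delta-lambda} into all of the above, every non-dissipative error is dominated by $\eta\delta_{q+2}$ (resp. $M\delta_{q+2}\lambda_{q+1}$, resp. $\delta_{q+2}\delta_{q+1}^{1/2}\lambda_{q+1}$) exactly when $c>\tfrac52$, $b>1$ and $a$ is large enough to absorb the powers of $\|e\|_{C^1},\|e\|_{C^2}$, precisely as quantified in \eqref{e:condizioni_su_abc_2}; while the dissipative error requires in addition $\lambda_{q+1}^{2\alpha-1}\delta_{q+1}^{1/2}\lesssim\delta_{q+2}$ (and its differentiated versions), which in terms of the exponents reduces to a lower bound on $c$ of the form $(1-2\alpha)c\gtrsim1$. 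This last condition is \emph{weaker} than $c>\tfrac52$ when $\alpha<\tfrac15$, and weaker than $c>\tfrac{3-2\alpha}{2(1-2\alpha)}$ when $\alpha<\tfrac12$, so it is automatically satisfied in both regimes of the statement --- this is exactly why the threshold is $\tfrac15$ for Leray solutions and $\tfrac12$ for the weaker conclusion. I expect the main obstacle to be precisely the dissipative error: the fractional Laplacian is nonlocal, so the clean heuristic ``$(-\Delta)^\alpha$ costs $\lambda_{q+1}^{2\alpha}$'' has to be turned into an honest quantitative estimate on the sum of modulated Beltrami waves, with explicit dependence on $\|a_I\|_k$ and on the phases $\Phi_I$; and one must then check that after paying this cost the whole iteration still closes --- in particular that the resulting constraint on $(\alpha,b,c)$ stays compatible with the lower bound $c>\tfrac52$ forced by the non-dissipative part of the scheme and by the starting triple of Lemma~\ref{l:partenza}.
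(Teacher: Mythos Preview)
Your overall architecture is right and matches the paper: start from Lemma~\ref{l:partenza}, build $w_{q+1}$ as a sum of modulated Beltrami waves transported by the mollified flow, define $\mathring R_{q+1}$ via the inverse-divergence operator $\mathcal R$, and close the iteration by checking the parameter algebra. Your identification of the dissipative Reynolds error $R^6=\mathcal R\bigl((-\Delta)^\alpha w\bigr)$ and of the heuristic $\|R^6\|_0\lesssim\delta_{q+1}^{1/2}\lambda_{q+1}^{2\alpha-1}$ is also correct, and it does lead only to a constraint of the type $c\gtrsim(1-2\alpha)^{-1}$, which is indeed weaker than both thresholds in the statement.

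There is, however, a genuine gap in your bookkeeping: you have not accounted for the \emph{dissipative term in the energy estimate}. In the paper the inductive energy control \eqref{e:energy_iter} is propagated via Lemma~\ref{l:energy}, and the computation of $\partial_t\int|v_q|^2$ now picks up the extra term $-2\int|(-\Delta)^{\alpha/2}v_q|^2$, producing a contribution of size $\lambda_q^{2\alpha+\varepsilon}\delta_q^{\alpha+\varepsilon}/\mu$ on the right-hand side of \eqref{e:energy}. After inserting the choice of $\mu$, the requirement that this be $\lesssim\delta_{q+2}$ forces precisely $c>\frac{3-2\alpha}{2(1-2\alpha)}$ (for $b$ near $1$). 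In other words, the threshold in the second bullet of Proposition~\ref{p:iterate} does \emph{not} come from the Reynolds stress at all --- as the paper remarks at the end of Section~\ref{s:reynolds}, $R^6$ is not the bottleneck --- but from this dissipative correction to the energy identity, which your sketch omits entirely. Without it your argument does not explain why the range $\alpha\in[\tfrac15,\tfrac12[$ requires the stronger lower bound on $c$.

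A few smaller points where your description diverges from the paper, harmless but worth noting: the mollification is in \emph{space only} (with a separate time discretization at scale $\mu^{-1}$ via the cutoffs $\chi_l$), not in space and time; the estimate on $R^6$ is obtained by the elementary H\"older interpolation $\|(-\Delta)^\alpha\mathcal Rw\|_0\leq C[\mathcal Rw]_{2\alpha+\varepsilon}\leq C\|\mathcal Rw\|_0^{1-2\alpha-\varepsilon}\|\mathcal Rw\|_1^{2\alpha+\varepsilon}$ (Theorem~\ref{lapla.holder}), not by a Littlewood--Paley argument; and the time-derivative bounds \eqref{e:t_derivatives} are \emph{conclusions} derived at the very end of the proof from $\partial_t w=D_t w-v_\ell\cdot\nabla w$ and Lemmas~\ref{l:ugly_lemma}--\ref{l:ugly_lemma_2}, not inductive hypotheses feeding into the Reynolds estimates.
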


Theorem \ref{t:main} is a very easy consequence of the above Proposition and we give the argument immediately.
Proposition \ref{p:tecnica} is somewhat more involved, since in fact it needs the details of the arguments of
Proposition \ref{p:iterate}. For this reason we give the corresponding argument only at the very end of the
paper

\begin{proof}[Proof of Theorem \ref{t:main}]
 Let $(v_q, p_q,\mathring{R}_q)$ be a sequence as in Proposition \ref{p:iterate}.
It follows then easily that $\{(v_q, p_q)\}$ converge uniformly to a pair of continuous functions $(v,p)$ such that
\eqref{e:energy_id} holds. 
We introduce the notation $\|\cdot\|_{C^\vartheta}$ for H\"older norms in space and time.
From \eqref{e:v_C0_iter}-\eqref{e:p_C1_iter}, \eqref{e:t_derivatives} and interpolation we conclude
\begin{align}
\|v_{q+1}-v_q\|_{C^\vartheta} & \leq M \delta_{q+1}^{\sfrac{1}{2}} \lambda_{q+1}^\vartheta 
\leq C a^{b^{q+1} (2cb \vartheta -1)/2}\\
\|p_{q+1}-p_q\|_{C^{2\vartheta}} & \leq M^2 \delta_{q+1} \lambda_{q+1}^{2\vartheta} \leq 
C a^{b^{q+1} (2cb \vartheta -1)}\, .
\end{align}
Thus, for every $\vartheta< \frac{1}{2bc}$, $v_q$ converges in $C^{\vartheta}$ and $p_q$ in $C^{2\vartheta}$.
Now if $\alpha < \frac{1}{4}$, by Proposition \ref{p:iterate}, we can choose any constants $c>\frac{5}{2}$ and $b>1$, so we have convergence in $C^\vartheta$ for every $\vartheta <\frac{1}{5}$.
Otherwise, if $\frac{1}{4}\leq \alpha<\frac{1}{2}$ we can choose any $c > \frac{3-2\alpha}{2(1-2\alpha)}$ and $ b>1$, getting convergence for any $ \vartheta < \frac{1-2\alpha}{3-2\alpha}$.
\end{proof}

The rest of the paper is devoted to prove the Proposition \ref{p:iterate} and hence Proposition \ref{p:tecnica}. The concluding arguments will be given in the final section. 

\section{The main iteration}

In this section we specify the inductive procedure which builds $(v_{q+1}, p_{q+1}, \mathring{R}_{q+1})$ from $(v_q, p_q, \mathring{R}_q)$. Many steps follow literally the same construction in \cite{BDIS} and we repeat them for the reader's convenience. 

Note that the choice of the sequences $\{\delta_q\}_{q\in\N}$ and $\{\lambda_q\}_{q\in\N}$ specified in Proposition \ref{p:iterate} implies that, for $a> a_0 (b,c)$, we have:
\begin{equation}\label{e:summabilities}
\sum_{j\leq q} \delta_j \lambda_j \leq 2 \delta_q \lambda_q\, ,\quad
1 \leq \sum_{j\leq q} \delta_j^{\sfrac{1}{2}} \lambda_j \leq 2\delta_q^{\sfrac{1}{2}} \lambda_q\, ,\quad
 \sum_j \delta_j \leq \sum_j \delta_j^{\sfrac{1}{2}} \leq 2\, .
\end{equation}
Our inductive hypothesis together with Lemma \ref{l:partenza} imply then the following set of estimates:
\begin{align}
&\|v\|_0 \leq 2 M,\qquad \|v_q\|_1 \leq 2M \delta_{q}^{\sfrac{1}{2}} \lambda_{q}\,,\label{e:v_old}\\
&\|\mathring{R}_q\|_0 \leq \eta\delta_{q+1}, \,\quad \|\mathring{R}_q\|_1 \leq M\delta_{q+1}\lambda_q\,, \label{e:R_old}\\
&\|p_q\|_0 \leq 2 M^2,  \qquad \|p_q\|_1 \leq 2M^2 \delta_q \lambda_q\,,\label{e:p_old}
\end{align}
and
\begin{equation}
\|(\partial_t+v_q\cdot\nabla)\mathring{R}_q\|_0\leq M\delta_{q+1}\delta_q^{\sfrac12}\lambda_q\,.\label{e:DtR_old}
\end{equation}

\subsection{$v_{q+1}-v_q$ as a sum of modulated Beltrami flows}
We next recall the following two important facts, whose proof can be found in \cite{DS-Inv,BDIS}.

\begin{proposition}[Beltrami flows]\label{p:Beltrami}
Let $\bar\lambda\geq 1$ and let $A_k\in\R^3$ be such that 
$$
A_k\cdot k=0,\,|A_k|=\tfrac{1}{\sqrt{2}},\,A_{-k}=A_k
$$
for $k\in\Z^3$ with $|k|=\bar\lambda$.
Furthermore, let 
$$
B_k=A_k+i\frac{k}{|k|}\times A_k\in\C^3.
$$
For any choice of $a_k\in\C$ with $\overline{a_k} = a_{-k}$ the vector field
\begin{equation}\label{e:Beltrami}
W(\xi)=\sum_{|k|=\bar\lambda}a_kB_ke^{ik\cdot \xi}
\end{equation}
is real-valued, divergence-free and satisfies
\begin{equation}\label{e:Bequation}
\div (W\otimes W)=\nabla\frac{|W|^2}{2}.
\end{equation}
Furthermore
\begin{equation}\label{e:av_of_Bel}
\langle W\otimes W\rangle:= \fint_{\T^3} W\otimes W\,d\xi = \frac{1}{2} \sum_{|k|=\bar\lambda} |a_k|^2 \left( \Id - \frac{k}{|k|}\otimes\frac{k}{|k|}\right)\, .  
\end{equation}
\end{proposition}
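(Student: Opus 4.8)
The plan is to verify the four assertions by a direct computation on the Fourier side; the only genuine input is the algebraic fact that each $B_k$ is an eigenvector of the operator $F\mapsto i\,k\times F$ with eigenvalue $|k|=\bar\lambda$, so that $W$ is a Beltrami field with $\curl W=\bar\lambda W$. First I would check that $W$ is real-valued: since $|-k|=|k|$ and $A_{-k}=A_k$, one has $B_{-k}=A_k-i\frac{k}{|k|}\times A_k=\overline{B_k}$, and together with the hypothesis $\overline{a_k}=a_{-k}$ this shows that the summand indexed by $-k$ in \eqref{e:Beltrami} is the complex conjugate of the one indexed by $k$; since $k\mapsto -k$ is a bijection of $\{k\in\Z^3:|k|=\bar\lambda\}$, summing gives $\overline W=W$.

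Next I would treat divergence-freeness and the curl eigenvalue relation. From $k\cdot A_k=0$ and $k\cdot(k\times A_k)=0$ one gets $k\cdot B_k=0$, hence $\div W=\sum_k a_k\,(i\,k\cdot B_k)\,e^{ik\cdot\xi}=0$. Using $k\times(k\times A_k)=k(k\cdot A_k)-|k|^2A_k=-|k|^2A_k$ one computes $i\,k\times B_k=i\,k\times A_k+|k|A_k=|k|B_k$, so that $\curl W=\sum_k a_k\,i(k\times B_k)\,e^{ik\cdot\xi}=\bar\lambda W$. The Beltrami identity \eqref{e:Bequation} then follows: since $\div W=0$ we have $\div(W\otimes W)=(W\cdot\nabla)W$, and the pointwise vector identity $(W\cdot\nabla)W=\nabla\frac{|W|^2}{2}-W\times\curl W$ combined with $\curl W=\bar\lambda W$ and $W\times W=0$ yields $\div(W\otimes W)=\nabla\frac{|W|^2}{2}$.

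Finally I would compute the average \eqref{e:av_of_Bel}. Expanding $W\otimes W=\sum_{k,k'}a_ka_{k'}\,B_k\otimes B_{k'}\,e^{i(k+k')\cdot\xi}$ and integrating over $\T^3$ annihilates every term except those with $k'=-k$, so that $\langle W\otimes W\rangle=\sum_{|k|=\bar\lambda}a_ka_{-k}\,B_k\otimes B_{-k}=\sum_{|k|=\bar\lambda}|a_k|^2\,B_k\otimes\overline{B_k}$, using $a_{-k}=\overline{a_k}$ and $B_{-k}=\overline{B_k}$. Since $W$ is real this matrix is real and symmetric, hence equal to its own real part $\sum_{|k|=\bar\lambda}|a_k|^2\,\mathrm{Re}(B_k\otimes\overline{B_k})$. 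Writing $n=k/|k|$, a short computation gives $\mathrm{Re}(B_k\otimes\overline{B_k})=A_k\otimes A_k+(n\times A_k)\otimes(n\times A_k)$; since $A_k\perp n$, the vectors $A_k$ and $n\times A_k$ form an orthogonal basis of $n^\perp$, each of squared length $\tfrac12$, so this real part equals $\tfrac12(\Id-n\otimes n)$, which is exactly \eqref{e:av_of_Bel}.

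Nothing here is deep; the step most likely to hide a mistake is the bookkeeping of complex conjugates — keeping the reality of $W$, the pairing $k\leftrightarrow -k$ in the average, and the identification of $\mathrm{Re}(B_k\otimes\overline{B_k})$ with the rank-two projection $\tfrac12(\Id-n\otimes n)$ mutually consistent. This proposition, together with the corresponding "geometric lemma" on the decomposition of symmetric tensors, is stated without proof in \cite{DS-Inv,BDIS}, and the argument above is the standard one.
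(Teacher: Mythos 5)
Your proof is correct and complete. The paper itself does not include a proof of this proposition — it simply defers to \cite{DS-Inv,BDIS} — and the argument you give is precisely the standard one: reality via the pairing $k\leftrightarrow -k$ and $B_{-k}=\overline{B_k}$; divergence-freeness from $k\cdot B_k=0$; the Beltrami relation $\curl W=\bar\lambda W$ (which you derive cleanly from $ik\times B_k=|k|B_k$) combined with the identity $(W\cdot\nabla)W=\nabla\tfrac{|W|^2}{2}-W\times\curl W$ to obtain \eqref{e:Bequation}; and finally the average formula by Fourier orthogonality plus the observation that $\mathrm{Re}\bigl(B_k\otimes\overline{B_k}\bigr)=A_k\otimes A_k+(n\times A_k)\otimes(n\times A_k)=\tfrac12(\Id-n\otimes n)$ since $A_k$ and $n\times A_k$ are an orthogonal pair of length $\tfrac{1}{\sqrt2}$ spanning $n^\perp$. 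The step where you pass from $\sum_k|a_k|^2B_k\otimes\overline{B_k}$ to its real part is justified exactly as you say: the full sum is invariant under conjugation (relabel $k\to -k$), hence real, hence equal to its real part. No gaps.
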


\begin{lemma}[Geometric Lemma]\label{l:split}
For every $N\in\N$ we can choose $r_0>0$ and $\bar{\lambda} > 1$ with the following property.
There exist pairwise disjoint subsets 
$$
\Lambda_j\subset\{k\in \Z^3:\,|k|=\bar{\lambda}\} \qquad j\in \{1, \ldots, N\}
$$
and smooth positive functions 
\[
\gamma^{(j)}_k\in C^{\infty}\left(B_{r_0} (\Id)\right) \qquad j\in \{1,\dots, N\}, k\in\Lambda_j
\]
such that
\begin{itemize}
\item[(a)] $k\in \Lambda_j$ implies $-k\in \Lambda_j$ and $\gamma^{(j)}_k = \gamma^{(j)}_{-k}$;
\item[(b)] for each $R\in B_{r_0} (\Id)$ we have the identity
\begin{equation}\label{e:split}
R = \frac{1}{2} \sum_{k\in\Lambda_j} \left(\gamma^{(j)}_k(R)\right)^2 \left(\Id - \frac{k}{|k|}\otimes \frac{k}{|k|}\right) 
\qquad \forall R\in B_{r_0}(\Id)\, .
\end{equation}
\end{itemize}
\end{lemma}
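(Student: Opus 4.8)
The plan is to reduce the statement to an elementary convexity fact about the symmetric matrices $M_e := \Id - e\otimes e$, $e\in S^2$, and then upgrade it twice: first to smooth dependence on $R$, then from continuous directions $e\in S^2$ to integer frequencies $k$. Note $\tr M_e = 2$, $M_e = M_{-e}$, and the claimed identity \eqref{e:split} reads $R = \tfrac12\sum_{k\in\Lambda_j}\bigl(\gamma^{(j)}_k(R)\bigr)^2 M_{k/|k|}$. The key observation is that the matrices $e\otimes e$ span $\mathrm{Sym}(3)$ (polarisation), so the $M_e$ affinely span the hyperplane $\mathcal H := \{M\in\mathrm{Sym}(3):\tr M = 2\}$, and that an elementary symmetry computation gives $\fint_{S^2}M_e\,d\sigma(e) = \Id - \tfrac13\Id = \tfrac23\Id$. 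Hence $\tfrac23\Id$ is the barycentre of a probability measure of full support on the connected set $\{M_e : e\in S^2\}$, so it lies in the interior, relative to $\mathcal H$, of $\mathrm{conv}\{M_e : e\in S^2\}$. A standard compactness argument (a finite covering together with Carathéodory's theorem) then produces finitely many $e_1,\dots,e_n\in S^2$ — which we take antipode-closed — such that $\tfrac23\Id$ still lies in the interior relative to $\mathcal H$ of $\mathrm{conv}\{M_{e_1},\dots,M_{e_n}\}$; equivalently, the $M_{e_i}$ affinely span $\mathcal H$ and $\tfrac23\Id = \sum_i\mu_i^0 M_{e_i}$ with all $\mu_i^0 > 0$ and $\sum_i\mu_i^0 = 1$.

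Next I would make the coefficients depend smoothly on $R$. The affine map $A:\{\mu\in\R^n:\sum_i\mu_i = 1\}\to\mathcal H$, $A(\mu) = \sum_i\mu_i M_{e_i}$, is surjective, hence admits an affine section $S$; averaging $S$ over the $\Z/2$-action swapping the labels of $e_i$ and $-e_i$ (under which $A$ is equivariant, since $M_{e_i} = M_{-e_i}$) and translating, I may assume $S$ equivariant and $S(\tfrac23\Id) = \mu^0$. Since $R\mapsto \tfrac{2}{\tr R}R$ is smooth near $\Id$, takes values in $\mathcal H$, and sends $\Id$ to $\tfrac23\Id$, setting $\mu(R):=S\bigl(\tfrac{2}{\tr R}R\bigr)$ gives $\sum_i\mu_i(R) = 1$ and $\sum_i\mu_i(R)M_{e_i} = \tfrac{2}{\tr R}R$, whence $R = \tfrac12\sum_i\bigl(\tr R\cdot\mu_i(R)\bigr)M_{e_i}$. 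Because $\mu(\Id) = \mu^0$ has strictly positive entries and $\tr\Id = 3$, there is $r_0 > 0$ with $\tr(R)\mu_i(R) > 0$ on $B_{r_0}(\Id)$; then $\gamma_i(R) := \sqrt{\tr(R)\mu_i(R)}$ is smooth and positive there, and equivariance of $S$ forces $\gamma_i = \gamma_j$ whenever $e_i = -e_j$.

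Finally I would pass to integer frequencies and to $N$ disjoint families. Rational points of $S^2$ are dense (stereographic projection from a rational point), and each has the form $q/|q|$ with $q\in\Z^3$, $|q|\in\N$. Choose $N$ pairwise disjoint, antipode-closed, sufficiently small perturbations of $\{e_1,\dots,e_n\}$ with all entries rational; by the openness exploited above each still has $\tfrac23\Id$ in the interior of the corresponding convex hull, hence carries functions as in the previous paragraph. Let $\bar\lambda$ be the least common multiple of the integers $|q|$ occurring among these configurations, and rescale every direction vector $q$ to $k := (\bar\lambda/|q|)\,q\in\Z^3$; then $|k| = \bar\lambda$ and $k/|k| = q/|q|$, so $M_{k/|k|}$ is unchanged, the index sets $\Lambda_1,\dots,\Lambda_N\subset\{k\in\Z^3:|k| = \bar\lambda\}$ are pairwise disjoint (after a further generic shrinking of the perturbations) and closed under $k\mapsto -k$, and \eqref{e:split} together with $\gamma^{(j)}_k = \gamma^{(j)}_{-k}$ hold verbatim.

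The only genuinely non-formal ingredient is the convexity statement — equivalently the averaging identity $\fint_{S^2}(\Id - e\otimes e)\,d\sigma = \tfrac23\Id$ together with the spanning property of the $e\otimes e$ — guaranteeing that the identity lies in the \emph{interior} of the relevant convex hull; this is where I expect the only real content to be. Everything after that (the affine section, the square root, the density of rational directions, and the common denominator $\bar\lambda$) is routine bookkeeping, the one point worth a line of care being the equivariant choice of $S$, so that the symmetry $\gamma^{(j)}_k = \gamma^{(j)}_{-k}$ is automatic rather than forced by an ad hoc splitting of weights.
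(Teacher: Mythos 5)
Your argument is correct. The paper does not give a proof of this lemma at all --- it cites \cite{DS-Inv,BDIS} --- so the comparison is with the standard proof in those references, and your proof follows the same essential route: the averaging identity $\fint_{S^2}(\Id-e\otimes e)\,d\sigma=\tfrac23\Id$ together with the affine-spanning property of $\{e\otimes e\}$ places $\tfrac23\Id$ in the relative interior of $\mathrm{conv}\{\Id-e\otimes e:e\in S^2\}$, Carath\'eodory reduces to finitely many directions, and an openness argument then produces smooth positive coefficient functions on a neighbourhood of $\Id$. Two aspects of your write-up are cleaner than what one usually sees. First, you obtain the smooth $\gamma^{(j)}_k$ via an explicit $\Z/2$-equivariant affine section $\hat S$ of the affine map $\mu\mapsto\sum_i\mu_i(\Id-e_i\otimes e_i)$ composed with $R\mapsto\tfrac{2}{\tr R}R$, which automatically enforces both positivity near $\Id$ and the symmetry $\gamma^{(j)}_k=\gamma^{(j)}_{-k}$ (the translation step preserving equivariance works because the averaged base point $\mu^0$ can itself be taken $\sigma$-invariant). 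Second, rather than invoking the number-theoretic fact that lattice directions on a single sphere $|k|=\bar\lambda$ become dense in $S^2$ for suitable $\bar\lambda\to\infty$, you perturb to rational directions of $S^2$ (dense by stereographic projection), each of the form $q/|q|$ with $|q|\in\N$, and then rescale all $q$ to a common norm $\bar\lambda=\mathrm{lcm}$ of the $|q|$'s; since $k/|k|$ and hence $\Id-\tfrac{k}{|k|}\otimes\tfrac{k}{|k|}$ are unaffected by this rescaling, \eqref{e:split} is preserved, the sets $\Lambda_j$ lie on the sphere $|k|=\bar\lambda$, and pairwise disjointness holds because the $N$ direction configurations were chosen disjoint. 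This bypasses the density-of-lattice-points input entirely and yields all $N$ families with a single elementary argument. No gap; the convexity fact is, as you say, the only genuinely nontrivial ingredient, and it is handled correctly.
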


The new velocity $v_{q+1}$ will be defined as a sum 
$$
v_{q+1}:= v_q + w_o + w_c,
$$
where $w_o$ is the principal perturbation and $w_c$ is a corrector.
 The ``principal part'' of the perturbation $w$ will be a sum of modulated Beltrami flows
 \[
w_o (t,x) := \sum_{|k|=\lambda_0} a_{k} (t,x) \phi_{k} (t,x) B_ke^{i\lambda_{q+1}k\cdot x}\, ,
\]
where $B_ke^{i\lambda_{q+1}k\cdot x}$ is a single Beltrami mode at frequency $\lambda_{q+1}$, with phase shift $\phi_{k}=\phi_{k}(t,x)$ (i.e. $|\phi_{k}|=1$) and amplitude $a_{k}=a_{k}(t,x)$. In the following subsections we will define $a_k$ and $\phi_k$.

\subsection{Space regularization of $v$ and $R$}\label{s:mollifier}

We fix a symmetric non-negative convolution kernel $\psi\in C^\infty_c (\R^3)$ and a small parameter $\ell$ (whose choice will be specified later). Define
$v_\ell:=v_q*\psi_\ell$ and $\mathring{R}_\ell:=\mathring{R}_q*\psi_\ell$,
where the convolution is in the $x$ variable only. 
Standard estimates on regularizations by convolution lead to the following:
\begin{align}
&\|v_q-v_{\ell}\|_0\leq C_0 M\,\delta_q^{\sfrac12}\lambda_q\ell,\label{e:v-v_ell}\\
&\|\mathring{R}-\mathring{R}_{\ell}\|_0\leq C_0 M\,\delta_{q+1}\lambda_q\ell\label{e:R-R_ell}
\end{align}
(where $C_0$ is a geometric constant)
and, for any $N\geq 1$, 
\begin{align}
&\|v_\ell\|_N\leq C M\,\delta_q^{\sfrac12}\lambda_q\ell^{1-N},\label{e:v_ell}\\
&\|\mathring{R}_\ell\|_N\leq C M\,\delta_{q+1}\lambda_q\ell^{1-N},\label{e:R_ell}
\end{align}
where $C$ is a constant which depends only upon $N$. 

\subsection{Time discretization and transport for the Reynolds stress}\label{s:time-discret}

Next, we fix a smooth cut-off function $\chi\in C^\infty_c ((-\frac{3}{4}, \frac{3}{4}))$ such that 
$$
\sum_{l\in \Z} \chi^2 (t-l) = 1,
$$ 
and a large parameter $\mu\in \N\setminus \{0\}$, whose choice will be specified later.

For any $l\in [0, \mu]$ we define
\[
\rho_l:= \frac{1}{3 (2\pi)^3} \left(e (l\mu^{-1}) \left(1-\delta_{q+2}\right) - \int_{\T^3} |v_q|^2 (x,l\mu^{-1})\, dx\right).
\]
Note that \eqref{e:energy_iter} implies
$$
\frac{1}{3(2\pi)^3}e (l\mu^{-1})(\tfrac{3}{4}\delta_{q+1}-\delta_{q+2})\leq \rho_l\leq \frac{1}{3(2\pi)^3}e (l\mu^{-1})(\tfrac{5}{4}\delta_{q+1}-\delta_{q+2}).
$$
Recalling that $b$ and $c$ are fixed, the condition $a\geq a_0 (b,c)$ implies that we might assume 
$\delta_{q+2}\leq \frac{1}{2}\delta_{q+1}$.
Thus we obtain
\begin{equation}\label{e:est_rhol}
\frac{\delta_{q+1}}{2C_0} \leq C_0^{-1}(\min e)\delta_{q+1}\leq \rho_l\leq C_0(\max e)\delta_{q+1} \leq 2 C_0 \delta_{q+1}\, ,
\end{equation}
where $C_0$ is (again) an absolute constant.

Finally, define $R_{\ell,l}$ to be the unique solution to the transport equation
\begin{equation}\label{e:R-transport}
\left\{\begin{array}{l}
\partial_t \mathring{R}_{\ell,l} +v_\ell\cdot\nabla \mathring{R}_{\ell,l} = 0 \\
\mathring{R}_{\ell,l}(x,\frac l{\mu})=\mathring{R}_{\ell}(x,\frac{l}{\mu})\, .
\end{array}\right.
\end{equation}
and set
\begin{equation}\label{e:mathringRnew}
R_{\ell,l}(x,t):=\rho_l\Id-\mathring{R}_{\ell,l}(x,t).
\end{equation}

\subsection{The maps $v_{q+1}, w, w_o$ and $w_c$}\label{ss:def_w}

We next consider $v_\ell$ as a $2\pi$-periodic function on $\R^3\times [0,1]$ and, for every $l\in [0, \mu]$, we let $\Phi_l: \R^3\times [0,1]\to \R^3$ be the solution of 
\begin{equation}\label{e:Phi-transport}
\left\{\begin{array}{l}
\partial_t \Phi_l + v_{\ell}\cdot  \nabla \Phi_l =0\\ \\
\Phi_l (x,l \mu^{-1})=x\, .
\end{array}\right.
\end{equation}
Observe that $\Phi_l (\cdot, t)$ is the inverse of the flow of the periodic vector-field $v_\ell$, starting at time $t=l\mu^{-1}$ as the identity. Thus, if $y\in (2\pi \Z)^3$, then $\Phi_l (x, t) - \Phi_l (x+y, t) \in (2\pi \Z)^3$:
$\Phi_l (\cdot, t)$ can hence be thought as a diffeomorphism of $\T^3$ onto itself and, for every $k\in \Z^3$, the map $\T^3 \times [0,1] \ni (x,t) \to e^{i \lambda_{q+1} k \cdot \Phi_l (x,t)}$ is well-defined.

We next apply Lemma \ref{l:split} with $N=2$, denoting by $\Lambda^e$ and $\Lambda^o$ the corresponding families of frequencies in $\Z^3$, and set $\Lambda := \Lambda^o$ + $\Lambda^e$. For each $k\in \Lambda$ and each $l\in \Z\cap[0,\mu]$ we then set
\begin{align}
\chi_l(t)&:=\chi\Bigl(\mu(t-\frac{l}{\mu})\Bigr),\label{e:chi_l}\\
a_{kl}(x,t)&:=\sqrt{\rho_l}\gamma_k \left(\frac{R_{\ell,l}(x,t)}{\rho_l}\right),\label{e:a_kl}\\
w_{kl}(x,t)& := a_{kl}(x,t)\,B_ke^{i\lambda_{q+1}k\cdot \Phi_l(x,t)}.\label{e:w_kl}
\end{align}
The ``principal part'' of the perturbation $w$ consists of the map
\begin{align}\label{e:def_wo}
w_o (x,t) := \sum_{\textrm{$l$ odd}, k\in \Lambda^o} \chi_l(t)w_{kl} (x,t) +
\sum_{\textrm{$l$ even}, k\in \Lambda^e} \chi_l(t)w_{kl} (x,t)\, .
\end{align}
From now on, in order to make our notation simpler, we agree that the pairs of indices 
$(k,l)\in \Lambda\times [0, \mu]$ which enter in our summations satisfy always 
the following condition: $k\in \Lambda^e$ when $l$ is even and $k\in \Lambda^o$ when $l$ is odd.

It will be useful to introduce the ``phase" 
\begin{equation}\label{e:phi_kl}
\phi_{kl}(x,t)=e^{i\lambda_{q+1}k\cdot[\Phi_l(x,t)-x]},
\end{equation}
with which we obviously have
\[
\phi_{kl}\cdot e^{i\lambda_{q+1}k\cdot x}=e^{i\lambda_{q+1}k\cdot\Phi_l}.
\]
Since $R_{\ell,l}$ and $\Phi_l$ are defined as solutions of the transport equations \eqref{e:R-transport} and \eqref{e:Phi-transport}, we have
\begin{align}\label{e:Dt_varia=0}
(\partial_t+v_\ell\cdot\nabla)a_{kl}=0\qquad\textrm{ and }\qquad(\partial_t+v_\ell\cdot\nabla)e^{i\lambda_{q+1}k\cdot \Phi_l(x,t)}=0,
\end{align}
hence also 
\begin{equation}\label{e:Dt_w_kl=0}
(\partial_t+v_\ell\cdot\nabla)w_{kl}=0.
\end{equation}
The corrector $w_c$ is then defined in such a way that $w:= w_o+w_c$ is divergence free:
\begin{align}
w_c&:= \sum_{kl} \frac{\chi_l}{\lambda_{q+1}}\curl\left(ia_{kl}\phi_{kl}\frac{k\times B_k}{|k|^2}\right) e^{i\lambda_{q+1}k \cdot x}\nonumber\\
&=\sum_{kl}\chi_l\Bigl(\frac{i}{\lambda_{q+1}}\nabla a_{kl}-a_{kl}(D\Phi_{l}-\Id)k\Bigr)\times\frac{k\times B_k}{|k|^2}e^{i\lambda_{q+1}k\cdot\Phi_l}\label{e:corrector}
\end{align}

\begin{remark} To see that $w=w_o+w_c$ is divergence-free, just note that, since $k\cdot B_k=0$, we have
$k\times (k\times B_k)=-|k|^2B_k$ and hence $w$ can be written as
\begin{equation}\label{e:w_compact_form}
w = \frac{1}{\lambda_{q+1}} \sum_{(k,l)} \chi_l \,\curl \left( i a_{kl}\,\phi_{kl}\,\frac{k\times B_k}{|k|^2} e^{i\lambda_{q+1}k \cdot x}\right)\, .
\end{equation}
\end{remark}

For future reference it is useful to introduce the notation
\begin{equation}\label{e:L_kl}
L_{kl}:=a_{kl}B_k+\Bigl(\frac{i}{\lambda_{q+1}}\nabla a_{kl}-a_{kl}(D\Phi_{l}-\Id)k\Bigr)\times\frac{k\times B_k}{|k|^2},
\end{equation}
so that the perturbation $w$ can be written as
\begin{equation}\label{e:w_Lform}
w=\sum_{kl}\chi_l\,L_{kl}\,e^{i\lambda_{q+1}k\cdot\Phi_l}\,.
\end{equation}
Moreover, we will frequently deal with the transport derivative with respect to the regularized flow $v_\ell$ of various expressions, and will henceforth use the notation
\begin{equation}\label{e:notation}
D_t:=\partial_t+v_{\ell}\cdot\nabla.
\end{equation}

\subsection{Determination of the constants $\eta$ and $M$}\label{s:eta-M}
In order to determine $\eta$, first of all recall from Lemma \ref{l:split} that the functions $a_{kl}$ are
well-defined provided 
\begin{equation*}
\biggl |\frac{R_{\ell,l}}{\rho_l} - \Id \biggr | \leq r_0\, ,
\end{equation*}
where $r_0$ is the constant of Lemma \ref{l:split}. Recalling the definition of $R_{\ell,l}$ we easily deduce from the maximum principle for transport equations (cf. \eqref{e:max_prin} in Proposition \ref{p:transport_derivatives}) that
$\|\mathring{R}_{\ell,l}\|_0\leq \|\mathring{R}\|_0$.
Hence, from \eqref{e:R_C0_iter} and \eqref{e:est_rhol} we obtain
\[
\biggl |\frac{R_{\ell, l}}{\rho_l} - \Id \biggr | \leq \bar C \frac{\eta}{\min e} \leq 2 \bar C \eta,
\]
for some geometric constant $\bar C$
and thus we will require that 
\begin{equation*}
2 \bar C \eta \leq \frac{r_0}{4}\, .
\end{equation*}

\medskip

The constant $M$ in turn is determined by comparing the estimate \eqref{e:v_C0_iter} for $q+1$ with the definition of the principal perturbation $w_o$ in \eqref{e:def_wo}. Indeed, using \eqref{e:chi_l}-\eqref{e:def_wo} and \eqref{e:est_rhol} we have
$\|w_o\|_{0}\leq \tilde{C} |\Lambda|(\max e)\delta_{q+1}^{\sfrac12}\leq 2 \tilde{C} |\Lambda| \delta_{q+1}^{\sfrac{1}{2}}$ for some geometric constant $\tilde{C}$.
We therefore set 
\begin{equation*}
M=4\tilde{C} |\Lambda|\, ,
\end{equation*}
so that
\begin{equation}\label{e:W_est_0}
\norm{w_o}_0\leq \frac{M}{2} \delta_{q+1}^{\sfrac12}\,.
\end{equation}

\subsection{The operator $\mathcal{R}$ and the Reynolds stress} Following \cite{DS-Inv}, we introduce the following useful operator
to ``invert'' the divergence and define the new Reynolds stress $\mathring{R}_{q+1}$.

\begin{definition}
Let $v$ be a smooth vector field. 
We then define $\RR v$ to be the matrix-valued periodic function
\begin{equation*}
\RR v:=\frac{1}{4}\left(D \P u+(D \P u)^T\right)+\frac{3}{4}\left(D u+(D u)^T\right)-\frac{1}{2}(\div u) \Id,
\end{equation*}
where $u\in C^{\infty}(\T^3,\R^3)$ is the solution of
\begin{equation*}
\Delta u=v-\fint_{\T^3}v\textrm{ in }\T^3
\end{equation*}
with $\fint_{\T^3} u=0$ and $\P$ is the Leray projection onto divergence-free fields with zero average.
\end{definition}

The key point is the following lemma: for its elementary proof we refer the reader to \cite{DS-Inv}. 

\begin{lemma}[$\RR=\textrm{div}^{-1}$]\label{l:reyn}
For any $v\in C^\infty (\T^3, \R^3)$ we have
\begin{itemize}
\item[(a)] $\RR v(x)$ is a symmetric trace-free matrix for each $x\in \T^3$;
\item[(b)] $\div \RR v = v-\fint_{\T^3}v$.
\end{itemize}
\end{lemma}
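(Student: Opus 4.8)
The plan is to verify both claims by a direct computation, using only the defining equation $\Delta u = v - \fint_{\T^3} v$ together with two elementary properties of the Leray projection: that $\div \P u = 0$, and that, since $\P$ and $\Delta$ are commuting Fourier multipliers, $\P u = u - \nabla q$ where $\Delta q = \div u$ and $\fint_{\T^3} q = 0$. Throughout I fix the conventions $(Dw)_{ij} = \partial_j w_i$ and $(\div A)_i = \sum_j \partial_j A_{ij}$ for a matrix field $A$.

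For part (a), symmetry of $\RR v(x)$ is immediate, since each of the three summands in the definition of $\RR$ is manifestly symmetric. For the trace one adds up the traces of the three terms separately: $\tr\big(D\P u + (D\P u)^T\big) = 2\,\div \P u = 0$, whereas $\tr\big(Du + (Du)^T\big) = 2\,\div u$ and $\tr\big((\div u)\Id\big) = 3\,\div u$; hence $\tr \RR v = \tfrac34 (2\,\div u) - \tfrac12 (3\,\div u) = 0$.

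For part (b), the first step is the general identity for a symmetrized gradient: with the conventions above, $\div(Dw) = \Delta w$, $\div\big((Dw)^T\big) = \nabla(\div w)$, and also $\div\big((\div u)\Id\big) = \nabla(\div u)$. Applying these to $w = \P u$ (so that $\div \P u = 0$) and to $w = u$, one obtains
\[
\div \RR v = \tfrac14\,\Delta \P u + \tfrac34\big(\Delta u + \nabla\div u\big) - \tfrac12\,\nabla\div u = \tfrac14\,\Delta \P u + \tfrac34\,\Delta u + \tfrac14\,\nabla\div u\,.
\]
The second step is to substitute $\Delta \P u = \P(\Delta u) = \Delta u - \nabla(\div u)$, valid because $\div \Delta u = \Delta\,\div u$; after this substitution the two contributions proportional to $\nabla\div u$ cancel and one is left with $\div \RR v = \Delta u = v - \fint_{\T^3} v$, which is (b).

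The argument is entirely routine once the index conventions are fixed, so I do not expect a genuine obstacle; the only point worth flagging is that the somewhat unusual weights $\tfrac14$, $\tfrac34$, $-\tfrac12$ in the definition of $\RR$ are forced — they are the unique coefficients that simultaneously cancel the spurious $\nabla\div u$ terms appearing in (b) and render the output trace-free in (a) — which is precisely why the operator is set up this way.
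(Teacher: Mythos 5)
Your computation is correct: the symmetry and trace identities for (a), the divergence identities $\div(Dw)=\Delta w$, $\div((Dw)^T)=\nabla\div w$, $\div((\div u)\Id)=\nabla\div u$, and the substitution $\Delta\P u=\Delta u-\nabla\div u$ all check out, yielding $\div\RR v=\Delta u=v-\fint v$. The paper omits this proof (citing \cite{DS-Inv} for it), and your argument is precisely the elementary direct verification intended there; the observation that the weights $\tfrac14,\tfrac34,-\tfrac12$ are uniquely determined by the two requirements is a nice bonus.
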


We next set 
\[
\mathring{R}_{q+1}= R^0+R^1+R^2+R^3+R^4+R^5+R^6,
\]
where
\begin{align}
R^0 &= \mathcal R \left(\partial_tw +v_\ell\cdot \nabla w +w \cdot\nabla v_\ell\right)\label{e:R^0_def}\\
R^1 &=\mathcal R \div \Big(w_o \otimes w_o- \sum_l \chi_l^2 R_{\ell, l} 
-\textstyle{\frac{|w_o|^2}{2}}\Id\Big)\label{e:R^1_def}\\
R^2 &=w_o\otimes w_c+w_c\otimes w_o+w_c\otimes w_c - \textstyle{\frac{|w_c|^2 + 2\langle w_o, w_c\rangle}{3}} {\rm Id}\label{e:R^2_def}\\
R^3 &= w\otimes (v_q - v_\ell) + (v_q-v_\ell)\otimes w
 - \textstyle{\frac{2 \langle (v_q-v_{\ell}), w\rangle}{3}} \Id\label{e:R^3_def}\\
R^4&=\mathring R_q- \mathring{R}_\ell \label{e:R^4_def}\\
R^5&=\sum_l \chi_l^2 (\mathring{R}_{\ell} - \mathring{R}_{l,\ell})\label{e:R^5_def}\\
R^6&=\mathcal R \Big( (- \Delta)^\alpha w  \Big). \label{e:R^6_def}
\end{align}
Observe that $\mathring{R}_{q+1}$ is indeed a traceless symmetric tensor. The corresponding form of the new pressure will then be
\begin{equation}\label{e:def_p_1}
p_{q+1}=p_q-\frac{|w_o|^2}{2} - \frac{1}{3} |w_c|^2 - \frac{2}{3} \langle w_o, w_c\rangle - \frac{2}{3} \langle v_q-v_\ell, w\rangle \,.
\end{equation}

Recalling \eqref{e:mathringRnew} we see that $\sum_l \chi_l^2 \tr R_{\ell, l}$ is a function of time only. Since also 
$\sum_l \chi_l^2 = 1$, it is then straightforward to check that 
\begin{align*}
&\div\mathring{R}_{q+1} - \nabla p_{q+1} -(-\Delta)^\alpha v_{q+1} = \\ &= \partial_t w + \div (v_q\otimes w + w\otimes v_q + w \otimes w) + \div \mathring{R}_q - \nabla p_q\nonumber-(-\Delta)^\alpha v_q\\
&=\partial_t w + \div (v_q\otimes w + w\otimes v_q + w \otimes w) + \partial_t v_q + \div (v_q\otimes v_q)\nonumber\\
&= \partial_t v_{q+1} + \div ( v_{q+1}\otimes v_{q+1})\, .
\end{align*}

The following lemma will play a key role.
\begin{lemma}\label{l:doublesum}
The following identity holds:
\begin{equation}\label{e:doublesum}
w_o\otimes w_o = \sum_l \chi_l^2 R_{\ell, l} + \sum_{(k,l), (k',l'), k\neq - k'} \chi_l \chi_{l'} w_{kl} \otimes w_{k'l'}\, .
\end{equation}
\end{lemma}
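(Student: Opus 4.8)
The plan is to expand the tensor product $w_o \otimes w_o$ directly from the definition \eqref{e:def_wo} and identify the diagonal (in $k$) contribution with $\sum_l \chi_l^2 R_{\ell,l}$. First I would write
\[
w_o \otimes w_o = \sum_{(k,l),(k',l')} \chi_l \chi_{l'}\, w_{kl} \otimes w_{k'l'}\,,
\]
and split the sum into the part where $k' = -k$ and the part where $k' \neq -k$; the latter is exactly the second term on the right-hand side of \eqref{e:doublesum}, so the whole content of the lemma is the identity
\[
\sum_{(k,l),(k',l'),\, k'=-k} \chi_l \chi_{l'}\, w_{kl} \otimes w_{k'l'} = \sum_l \chi_l^2 R_{\ell,l}\,.
\]

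Next I would argue that in the $k' = -k$ block only the terms with $l' = l$ survive. This is the role of the support condition on $\chi$: since $\chi \in C^\infty_c((-\tfrac34,\tfrac34))$, the cutoffs $\chi_l$ and $\chi_{l'}$ have overlapping support only if $|l - l'| \leq 1$. When $l$ and $l'$ have different parities (so $|l-l'|=1$ is the only way to overlap), the frequency set for $w_{kl}$ is $\Lambda^o$ or $\Lambda^e$ while that for $w_{k'l'}$ is the other one; since $\Lambda^o$ and $\Lambda^e$ are disjoint and $k' = -k$ forces $k,k'$ to lie in the same $\Lambda_j$ (as $k \in \Lambda_j \Leftrightarrow -k \in \Lambda_j$ by Lemma \ref{l:split}(a)), such cross terms vanish. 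Hence only $l' = l$ contributes, giving $\sum_l \chi_l^2 \sum_{k \in \Lambda_{(l)}} w_{kl} \otimes w_{-k,l}$, where $\Lambda_{(l)}$ denotes $\Lambda^e$ or $\Lambda^o$ according to the parity of $l$.

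It then remains to show, for each fixed $l$, the pointwise identity $\sum_{k} w_{kl} \otimes w_{-k,l} = R_{\ell,l}$. Using $\phi_{kl}\phi_{-k,l} = 1$ and recalling \eqref{e:w_kl}, $w_{kl}\otimes w_{-k,l} = a_{kl} a_{-k,l}\, B_k \otimes B_{-k}$, and since $a_{kl} = a_{-k,l}$ (because $\gamma_k = \gamma_{-k}$) and $B_{-k} = \overline{B_k}$, this equals $a_{kl}^2\, B_k \otimes \overline{B_k}$. Now $\mathrm{Re}(B_k \otimes \overline{B_k}) = \Id - \frac{k}{|k|}\otimes\frac{k}{|k|}$ (a standard computation with $B_k = A_k + i\frac{k}{|k|}\times A_k$, $|A_k| = \frac{1}{\sqrt 2}$, $A_k \cdot k = 0$), and summing over the conjugate pairs $\pm k$ the imaginary parts cancel, so
\[
\sum_{k \in \Lambda_{(l)}} a_{kl}^2\, B_k \otimes \overline{B_k} = \sum_{k \in \Lambda_{(l)}} a_{kl}^2 \Bigl(\Id - \tfrac{k}{|k|}\otimes\tfrac{k}{|k|}\Bigr) = 2 \sum_{k\in \Lambda_{(l)}} \tfrac{a_{kl}^2}{2}\Bigl(\Id - \tfrac{k}{|k|}\otimes\tfrac{k}{|k|}\Bigr)\,.
\]
By the definition \eqref{e:a_kl}, $a_{kl} = \sqrt{\rho_l}\, \gamma_k(R_{\ell,l}/\rho_l)$, so $a_{kl}^2 = \rho_l (\gamma_k(R_{\ell,l}/\rho_l))^2$, and applying the Geometric Lemma \eqref{e:split} with $R = R_{\ell,l}/\rho_l \in B_{r_0}(\Id)$ (which holds by the choice of $\eta$ in Section \ref{s:eta-M}) gives $\sum_k \frac{1}{2}(\gamma_k(R_{\ell,l}/\rho_l))^2(\Id - \frac{k}{|k|}\otimes\frac{k}{|k|}) = R_{\ell,l}/\rho_l$, whence the block sum equals $\rho_l \cdot \frac{R_{\ell,l}}{\rho_l}\cdot$\,(up to the factor bookkeeping) $= R_{\ell,l}$, as desired. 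The only delicate point — and the one I would be most careful about — is the cancellation of the cross terms with $l \neq l'$: it uses both the disjointness of $\Lambda^e, \Lambda^o$ and the structure of the support of $\chi$, and one must make sure the parity convention adopted after \eqref{e:def_wo} is correctly invoked so that no $l' = l \pm 1$ term with a nonzero coefficient is overlooked.
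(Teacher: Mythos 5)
Your overall strategy matches the paper's: expand the double sum, observe that the cross terms with $l\neq l'$ vanish because either $\chi_l\chi_{l'}=0$ (same parity, $|l-l'|\geq 2$) or the parity convention forces $k\in\Lambda^e$, $k'\in\Lambda^o$ with both sets symmetric and disjoint so that $k=-k'$ is impossible, and then identify the diagonal $l=l'$ block with $R_{\ell,l}$ via Proposition~\ref{p:Beltrami} and Lemma~\ref{l:split}. The paper invokes these facts directly; you re-derive the Beltrami identity by hand, and that is where the error is.

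The claim $\Re(B_k\otimes\overline{B_k})=\Id-\frac{k}{|k|}\otimes\frac{k}{|k|}$ is off by a factor of $2$. Since $|A_k|=|\tfrac{k}{|k|}\times A_k|=\tfrac{1}{\sqrt 2}$ and $\bigl\{A_k,\ \tfrac{k}{|k|}\times A_k,\ \tfrac{k}{|k|}\bigr\}$ is orthogonal, one gets
$\Re(B_k\otimes\overline{B_k}) = A_k\otimes A_k + (\tfrac{k}{|k|}\times A_k)\otimes(\tfrac{k}{|k|}\times A_k) = \tfrac{1}{2}\bigl(\Id-\tfrac{k}{|k|}\otimes\tfrac{k}{|k|}\bigr)$,
consistent with \eqref{e:av_of_Bel} after pairing $\pm k$. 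With your version the diagonal block evaluates to $2R_{\ell,l}$, not $R_{\ell,l}$: indeed your chain gives
$\sum_k a_{kl}^2 B_k\otimes\overline{B_k}\;\text{``}=\text{''}\; 2\sum_k\tfrac{a_{kl}^2}{2}\bigl(\Id-\tfrac{k}{|k|}\otimes\tfrac{k}{|k|}\bigr) = 2\rho_l\cdot\tfrac{R_{\ell,l}}{\rho_l}=2R_{\ell,l}$.
This is exactly the ``factor bookkeeping'' you wave away at the end, and it does not come out in the wash: the corrected $\Re(B_k\otimes\overline{B_k})=\tfrac12(\Id-\tfrac{k}{|k|}\otimes\tfrac{k}{|k|})$ is precisely what cancels against the $\tfrac12$ in the Geometric Lemma to produce $R_{\ell,l}$ on the nose. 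Fix that one constant and the argument is complete and correct.
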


\begin{proof}
Recall that the pairs $(k,l)$, $(k',l')$ are chosen so that $k\neq -k'$ if $l$ is even and 
$l'$ is odd. Moreover $\chi_l\chi_{l'} = 0$ if $l$ and $l'$ are distinct and have the same parity. 
Hence the claim follows immediately from our choice of $a_{kl}$ in \eqref{e:a_kl} and Proposition 
\ref{p:Beltrami} and Lemma \ref{l:split} (cf.\  \cite[Proposition 6.1(ii)]{DS-Inv}).
\end{proof}

\subsection{Conditions on the parameters - hierarchy of length-scales}
In the next couple of sections we will need to estimate various expressions involving $v_\ell$ and $w$. To simplify the formulas that we arrive at, we will from now on assume the following conditions on $\mu,\lambda_{q+1}\geq 1$ and $\ell\leq 1$: 

\begin{equation}\label{e:conditions_lambdamu_2}
 \frac{\delta_{q}^{\sfrac{1}{2}}\lambda_q\ell}{\delta_{q+1}^{\sfrac{1}{2}}}\leq1, \quad
\frac{\delta_q^{\sfrac12}\lambda_q}{\mu} +\frac{1}{\ell\lambda_{q+1}}\leq \lambda_{q+1}^{-\beta}\quad\mbox{and}\quad
 \frac{1}{\lambda_{q+1}}\leq  \frac{\delta_{q+1}^{\sfrac12}}{\mu}\, ,
\end{equation}
where $\beta$ is a small positive exponent which will be specified only in the final section. 

These conditions imply the following orderings of length scales, which will be used to simplify the estimates in Section \ref{s:perturbation_estimates}:
\begin{equation}\label{e:ordering_params}
\frac{1}{\delta_{q+1}^{\sfrac{1}{2}} \lambda_{q+1}}\leq \frac{1}{\mu} \leq \frac{1}{\delta_q^{\sfrac{1}{2}}\lambda_{q}}\qquad
\mbox{and}\qquad \frac{1}{\lambda_{q+1}} \leq \ell \leq \frac{1}{\lambda_q}\, . 
\end{equation}

\section{Estimates on the perturbation}\label{s:perturbation_estimates}

The following lemmas are taken directly from \cite{BDIS}, see Lemmas 3.1 and 3.2 therein. A simple inspection of the proof given there show the dependence of
the constants claimed below, which differ slightly from \cite{BDIS} where the same constants are depending upon the energy profile
$e$. Indeed, a simple inspection of the proofs in \cite{BDIS} shows easily that, because of the time discretization introduced in Section 
\ref{s:time-discret} the constants do not depend on the derivatives of $e$, but only on $\min e$ and $\max e$: here we can forget about such dependence because of the assumption $\frac{1}{2} \leq e \leq 1$.

\begin{lemma}\label{l:ugly_lemma}
Assume \eqref{e:conditions_lambdamu_2} holds.  For $t$ in the range $\abs{\mu t-l}<1$ we have
\begin{align}
&\norm{D\Phi_l}_0\leq C\, \label{e:phi_l}\, ,\\
&\norm{D\Phi_l - \Id}_0 \leq C \frac{\delta_q^{\sfrac{1}{2}}\lambda_q}{\mu}\label{e:phi_l_1}\, ,\\
&\norm{D\Phi_l}_N\leq C \frac{\delta_q^{\sfrac{1}{2}} \lambda_q }{\mu \ell^N},& N\ge 1\, .\label{e:Dphi_l_N}
\end{align}
where the constants in \eqref{e:phi_l} and \eqref{e:phi_l_1} are absolute constants, whereas $C$ in \eqref{e:Dphi_l_N} depends only
upon $N$.
Moreover,
\begin{align}
&\norm{a_{kl}}_0+\norm{L_{kl}}_0\leq C \delta_{q+1}^{\sfrac12}\, ,\label{e:L}\\
&\norm{a_{kl}}_N\leq C\delta_{q+1}^{\sfrac12}\lambda_q\ell^{1-N},&N\geq 1\label{e:Da}\\
&\norm{L_{kl}}_N\leq C\delta_{q+1}^{\sfrac12}\ell^{-N},&N\geq 1\label{e:DL}\\
&\norm{\phi_{kl}}_N\leq C \lambda_{q+1} \frac{\delta_q^{\sfrac{1}{2}} \lambda_q}{\mu \ell^{N-1}}
+ C \left(\frac{\delta_q^{\sfrac{1}{2}} \lambda_q \lambda_{q+1}}{\mu}\right)^N\nonumber \\
&\quad\qquad\;\leq C\lambda_{q+1}^{N(1-\beta)}&N\geq 1,\label{e:phi}
\end{align}
where again the constants in \eqref{e:L} and \eqref{e:Da} are absolute and the ones in the other two estimates
depend only upon $N$. 

Consequently, for any $N\geq 0$
\begin{align}
&\norm{w_c}_N \leq C  \delta_{q+1}^{\sfrac12} \frac{\delta_q^{\sfrac12}\lambda_q}{\mu} 
\lambda_{q+1}^N\label{e:corrector_est},\\
&\norm{w_o}_1\leq \frac{M}{2} \delta_{q+1}^{\sfrac12} \lambda_{q+1} +
C \delta_{q+1}^{\sfrac12} \lambda_{q+1}^{1-\beta},\label{e:W_est_1}\\
&\norm{w_o}_N\leq C \delta_{q+1}^{\sfrac12}\lambda_{q+1}^N,\qquad &N\geq 2\label{e:W_est_N}
\end{align}
where the constants $C$s depend only on $N$.
\end{lemma}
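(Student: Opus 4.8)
The plan is to prove Lemma~\ref{l:ugly_lemma} by following the scheme of \cite{BDIS} Lemmas~3.1--3.2, organizing the estimates in the natural causal order dictated by the definitions: first the flow maps $\Phi_l$, then the amplitudes $a_{kl}$ and the phases $\phi_{kl}$, then the combined quantities $L_{kl}$, $w_c$ and $w_o$.

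\medskip

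\textbf{Step 1: Estimates on $\Phi_l$.} First I would apply the transport estimates of Proposition~\ref{p:transport_derivatives} (the standard estimates on flows of vector fields, cf.~\eqref{e:Phi-transport}) together with the mollification bounds \eqref{e:v_ell}. Writing $D_t(D\Phi_l - \Id) = -(D v_\ell)\, D\Phi_l$ and integrating over the time interval $|\mu t - l| < 1$, which has length $\sim \mu^{-1}$, one gets $\norm{D\Phi_l - \Id}_0 \lesssim \mu^{-1}\norm{v_\ell}_1 \lesssim \mu^{-1} M \delta_q^{1/2}\lambda_q$, which is \eqref{e:phi_l_1}; the first condition in \eqref{e:ordering_params} then gives $\norm{D\Phi_l}_0 \leq C$, which is \eqref{e:phi_l}. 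For higher derivatives one differentiates the transport equation $N$ times, uses the $C^N$ bounds \eqref{e:v_ell} ($\norm{v_\ell}_{N+1}\lesssim M\delta_q^{1/2}\lambda_q \ell^{-N}$) and Grönwall together with the already-established $C^0$--$C^1$ control, yielding \eqref{e:Dphi_l_N}. The ordering $\ell\lambda_q \leq 1$ from \eqref{e:ordering_params} is used repeatedly to absorb lower-order terms.

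\medskip

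\textbf{Step 2: Estimates on $a_{kl}$, $\phi_{kl}$ and $L_{kl}$.} For $a_{kl}$, recall $a_{kl} = \sqrt{\rho_l}\,\gamma_k(R_{\ell,l}/\rho_l)$; since $\gamma_k$ is smooth on a fixed ball, $\norm{a_{kl}}_0 \lesssim \sqrt{\rho_l}\lesssim \delta_{q+1}^{1/2}$ by \eqref{e:est_rhol}, giving the $a_{kl}$ part of \eqref{e:L}. For $N\geq 1$ one uses the chain rule, the fact that $\rho_l$ is a constant in $x$, and transport estimates on $\mathring{R}_{\ell,l}$ solving \eqref{e:R-transport}: differentiating, $\norm{\mathring{R}_{\ell,l}}_N$ is controlled via Proposition~\ref{p:transport_derivatives} by $\norm{\mathring{R}_\ell}_N$ (from \eqref{e:R_ell}) plus flow-derivative contributions (from Step~1), which after using \eqref{e:ordering_params} yields $\norm{R_{\ell,l}}_N \lesssim \delta_{q+1}\lambda_q\ell^{1-N}$ and hence \eqref{e:Da}. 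For $\phi_{kl} = e^{i\lambda_{q+1}k\cdot[\Phi_l - x]}$, the $N$-th derivative is a sum of products of $\lambda_{q+1}D(\Phi_l - \Id)$ and higher derivatives of $\Phi_l$; plugging in \eqref{e:phi_l_1}--\eqref{e:Dphi_l_N} gives the first line of \eqref{e:phi}, and the second condition in \eqref{e:conditions_lambdamu_2} converts both terms into $\lambda_{q+1}^{N(1-\beta)}$. Then $L_{kl}$ in \eqref{e:L_kl} is a fixed algebraic combination of $a_{kl}$, $\nabla a_{kl}/\lambda_{q+1}$ and $a_{kl}(D\Phi_l - \Id)$: its $C^0$ bound follows from \eqref{e:L} and \eqref{e:phi_l_1}, and \eqref{e:DL} from Leibniz applied to \eqref{e:L}--\eqref{e:Da} and \eqref{e:Dphi_l_N}, again simplified by \eqref{e:ordering_params}.

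\medskip

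\textbf{Step 3: Estimates on $w_c$ and $w_o$, and the main obstacle.} The corrector estimate \eqref{e:corrector_est} comes from the second representation of $w_c$ in \eqref{e:corrector}: each summand is a product of $\chi_l$, a factor $(\frac{i}{\lambda_{q+1}}\nabla a_{kl} - a_{kl}(D\Phi_l - \Id))$ of size $\delta_{q+1}^{1/2}\delta_q^{1/2}\lambda_q\mu^{-1}$ (using \eqref{e:Da} with $N=1$, \eqref{e:ordering_params}, and \eqref{e:phi_l_1}), and $e^{i\lambda_{q+1}k\cdot\Phi_l}$; distributing the $N$ derivatives among these factors, the worst term hits the exponential and produces $\lambda_{q+1}^N$, giving \eqref{e:corrector_est}. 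For $w_o = \sum \chi_l L_{kl} e^{i\lambda_{q+1}k\cdot\Phi_l}$, the case $N\geq 2$ in \eqref{e:W_est_N} is routine Leibniz using \eqref{e:DL} and \eqref{e:Dphi_l_N}. The delicate point---and the main obstacle---is \eqref{e:W_est_1}: one must \emph{not} lose the sharp constant $M/2$ in front of $\delta_{q+1}^{1/2}\lambda_{q+1}$. The derivative $\nabla w_o$ has a leading term where $\nabla$ hits the exponential, producing $\sum\chi_l L_{kl}(i\lambda_{q+1}(D\Phi_l)^T k)e^{i\lambda_{q+1}k\cdot\Phi_l}$; using $\norm{L_{kl}}_0 \leq C\delta_{q+1}^{1/2}$ from \eqref{e:L} together with $\norm{D\Phi_l}_0\leq C$ from \eqref{e:phi_l} and the bound $M = 4\tilde C|\Lambda|$ (so that the number-of-modes factor is absorbed exactly as in Section~\ref{s:eta-M}), this term is bounded by $\frac{M}{2}\delta_{q+1}^{1/2}\lambda_{q+1}$; all remaining terms (derivative on $\chi_l$, on $L_{kl}$, and the correction $D\Phi_l - \Id$ part) are lower order and contribute the $C\delta_{q+1}^{1/2}\lambda_{q+1}^{1-\beta}$ tail via \eqref{e:phi_l_1}, \eqref{e:DL} and \eqref{e:ordering_params}. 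Tracking the geometric constants carefully here, rather than the arithmetic, is where the argument requires care; the rest is bookkeeping with the length-scale orderings in \eqref{e:ordering_params}.
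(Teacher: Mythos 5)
Your proposal is correct in structure and follows the same route the paper itself takes: the authors do not write out a proof of Lemma~\ref{l:ugly_lemma} but defer entirely to Lemmas~3.1 and~3.2 of \cite{BDIS}, noting only that the time discretization of Section~\ref{s:time-discret} replaces dependence on derivatives of $e$ by dependence on $\min e, \max e$, which is absorbed because $\tfrac{1}{2} \le e \le 1$. Your Steps 1--3 reproduce that proof in the same causal order (flows, then amplitudes/phases, then $w_c$ and $w_o$), with the same use of Proposition~\ref{p:transport_derivatives}, the mollification bounds \eqref{e:v_ell}--\eqref{e:R_ell}, the chain rule (Proposition~\ref{p:chain}), and the length-scale orderings \eqref{e:conditions_lambdamu_2}, \eqref{e:ordering_params}, and your identification of the sharp constant $M/2$ in \eqref{e:W_est_1} as the delicate point is accurate.

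One small slip in Step~3: you write $w_o = \sum \chi_l L_{kl} e^{i\lambda_{q+1} k \cdot \Phi_l}$, but that expression equals $w = w_o + w_c$ (cf.\ \eqref{e:w_Lform}); the principal part is $w_o = \sum \chi_l a_{kl} B_k e^{i\lambda_{q+1} k \cdot \Phi_l}$. For the sharp $\tfrac{M}{2}\delta_{q+1}^{1/2}\lambda_{q+1}$ bound you should apply the derivative to this last expression (the one whose $C^0$ norm is exactly $\le \tfrac{M}{2}\delta_{q+1}^{1/2}$ by the choice of $M$ in Section~\ref{s:eta-M}), isolating the leading term $i\lambda_{q+1}\sum \chi_l a_{kl}(B_k \otimes k) e^{i\lambda_{q+1} k \cdot \Phi_l}$ with $D\Phi_l$ replaced by $\Id$; the $D\Phi_l - \Id$ correction and the $w_c$ contribution are both absorbed into the $C\delta_{q+1}^{1/2}\lambda_{q+1}^{1-\beta}$ tail via \eqref{e:phi_l_1}, \eqref{e:corrector_est} and \eqref{e:conditions_lambdamu_2}. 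Similarly, the $N \ge 2$ case uses \eqref{e:Da} and \eqref{e:phi} rather than \eqref{e:DL}. These are notational corrections, not gaps; the argument you outline goes through.
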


\begin{lemma}\label{l:ugly_lemma_2} Recall that $D_t= \partial_t + v_\ell \cdot \nabla$. Under the assumptions of Lemma \ref{l:ugly_lemma} we have
\begin{align}
\|D_t v_\ell\|_N &\leq C \delta_q\lambda_q\ell^{-N}\, , \label{e:Dt_v}\\
\|D_t L_{kl}\|_N &\leq C \delta_{q+1}^{\sfrac{1}{2}} \delta_q^{\sfrac{1}{2}} \lambda_q\ell^{-N}\, ,\label{e:DtL}\\
\|D^2_t L_{kl}\|_N &\leq C\delta_{q+1}^{\sfrac{1}{2}} \delta_q \lambda_q\ell^{-N-1}\, ,\label{e:D2tL}\\
\norm{D_t w_c}_N &\leq C  \delta_{q+1}^{\sfrac12}\delta_q^{\sfrac12}\lambda_q\lambda_{q+1}^N\, ,\label{e:Dt_wc}\\
\norm{D_t w_o}_N &\leq C  \delta_{q+1}^{\sfrac12}\mu\lambda_{q+1}^N\, ,\label{e:Dt_wo}
\end{align}
where the constants depend only upon $N$. 
\end{lemma}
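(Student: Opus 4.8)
The plan is to estimate each transport derivative by a systematic application of the chain rule, using the material-derivative identities \eqref{e:Dt_varia=0} and \eqref{e:Dt_w_kl=0} together with the spatial estimates already collected in Lemma \ref{l:ugly_lemma}. The key organizing principle is that $D_t$ commutes well with the flow-adapted objects and produces a commutator with $\nabla$ of the form $[D_t, \nabla] = -(\nabla v_\ell)^T \nabla$, so that each application of $D_t$ to a spatial derivative costs an extra factor $\|v_\ell\|_1 \sim \delta_q^{\sfrac12}\lambda_q$.

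First I would handle \eqref{e:Dt_v}: writing $D_t v_\ell = (\partial_t + v_\ell\cdot\nabla) v_\ell$ and recalling that $v_q$ solves \eqref{fracNSR}, one has $\partial_t v_q = -v_q\cdot\nabla v_q - \nabla p_q - (-\Delta)^\alpha v_q + \div\mathring{R}_q$; mollifying and regrouping, $D_t v_\ell$ is a sum of a commutator term $(v_\ell\cdot\nabla v_\ell) - (v_q\cdot\nabla v_q)*\psi_\ell$, a pressure term, a dissipative term, and $\div\mathring{R}_\ell$. Using \eqref{e:v_old}, \eqref{e:p_old}, \eqref{e:R_old}, the standard mollification estimates \eqref{e:v_ell}, \eqref{e:R_ell}, the commutator estimate for convolution, and the orderings \eqref{e:ordering_params}, every piece is bounded by $C\delta_q\lambda_q\ell^{-N}$; this is essentially Lemma 3.2 of \cite{BDIS} with the dissipative term $(-\Delta)^\alpha v_\ell$ added, which by \eqref{e:v_ell} contributes $\lesssim \delta_q^{\sfrac12}\lambda_q \ell^{-2\alpha}\ell^{-N} \le \delta_q^{\sfrac12}\lambda_q\ell^{-N}$ and is harmless since $\ell\le 1$ and $\delta_q^{\sfrac12}\le\delta_q^{\sfrac12}$ — actually one should check this fits into $\delta_q\lambda_q\ell^{-N}$, which it does once $\ell^{1-2\alpha}\le\delta_q^{\sfrac12}$, a consequence of the first inequality in \eqref{e:conditions_lambdamu_2}.

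Next, for \eqref{e:DtL} and \eqref{e:D2tL}, recall from \eqref{e:L_kl} that $L_{kl} = a_{kl}B_k + (\tfrac{i}{\lambda_{q+1}}\nabla a_{kl} - a_{kl}(D\Phi_l-\Id)k)\times \tfrac{k\times B_k}{|k|^2}$. Since $D_t a_{kl}=0$ by \eqref{e:Dt_varia=0}, applying $D_t$ only hits $\nabla a_{kl}$ (producing $[D_t,\nabla]a_{kl} = -(\nabla v_\ell)^T\nabla a_{kl}$, estimated by \eqref{e:v_ell}, \eqref{e:Da}) and $D\Phi_l$ (whose transport derivative satisfies $D_t D\Phi_l = -(D\Phi_l)(\nabla v_\ell)$, estimated by \eqref{e:phi_l}, \eqref{e:v_ell}). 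Combining these with \eqref{e:L}–\eqref{e:Da} and using $\tfrac{1}{\lambda_{q+1}}\le\ell$ from \eqref{e:ordering_params} gives \eqref{e:DtL}; iterating once more, and using \eqref{e:Dt_v} to control $D_t\nabla v_\ell$, gives \eqref{e:D2tL} with the extra $\delta_q^{\sfrac12}\ell^{-1}$ loss. Finally \eqref{e:Dt_wc} and \eqref{e:Dt_wo}: from \eqref{e:w_compact_form} and \eqref{e:Dt_w_kl=0}, $D_t$ applied to $w = \sum \chi_l L_{kl} e^{i\lambda_{q+1}k\cdot\Phi_l}$ only hits the cutoff $\chi_l$ (giving $\dot\chi_l \sim \mu$, responsible for the $\mu$ in \eqref{e:Dt_wo}) and, after splitting $w=w_o+w_c$, one estimates $D_t w_o$ and $D_t w_c$ separately using \eqref{e:DtL}, \eqref{e:phi}, \eqref{e:corrector_est}, \eqref{e:W_est_N} and the Leibniz rule for the $N$ spatial derivatives, together with \eqref{e:conditions_lambdamu_2} to absorb lower-order terms. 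The main obstacle is purely bookkeeping: in \eqref{e:Dt_wc} and \eqref{e:Dt_wo} one must carefully track which of the two competing terms in each of \eqref{e:phi} and \eqref{e:corrector_est} dominates after multiplication by $\mu$ versus $\delta_q^{\sfrac12}\lambda_q$, and verify via \eqref{e:ordering_params} that the stated right-hand sides are indeed the largest; there is no conceptual difficulty beyond what is already in \cite{BDIS}, since the dissipative term $(-\Delta)^\alpha w$ does not enter these material-derivative estimates at all (it appears only later in $R^6$).
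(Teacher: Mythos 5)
The paper does not reproduce a proof of this lemma; it simply cites Lemmas 3.1 and 3.2 of \cite{BDIS}, remarking only that, thanks to the time discretization, the constants depend on $\min e$ and $\max e$ rather than on higher derivatives of the energy profile (and are therefore absolute here, since $\frac12\le e\le 1$). Your outline matches that argument. You correctly identify that the only ingredient not already in the Euler case is the dissipative contribution $(-\Delta)^\alpha v_\ell$ to $D_t v_\ell$ (the remaining four estimates use \eqref{e:Dt_varia=0}, \eqref{e:Dt_w_kl=0} and the transport identity $D_t(D\Phi_l)=-D\Phi_l\,Dv_\ell$, none of which see the equation), and you reduce the new term to the condition $\ell^{1-2\alpha}\le\delta_q^{\sfrac12}$, which is the right condition.

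Two small points of precision. First, from \eqref{e:v_ell} the fractional Laplacian contributes $\|(-\Delta)^\alpha v_\ell\|_N\lesssim\delta_q^{\sfrac12}\lambda_q\,\ell^{1-2\alpha}\ell^{-N}$; you wrote $\ell^{-2\alpha}$, dropping the exponent $1$, which would make the next inequality $\le\delta_q^{\sfrac12}\lambda_q\ell^{-N}$ false since $\ell\le 1$. The corrected power $\ell^{1-2\alpha}\le 1$ makes the chain consistent and leads to exactly the requirement you then state. Second, $\ell^{1-2\alpha}\le\delta_q^{\sfrac12}$ is not a direct consequence of the first inequality in \eqref{e:conditions_lambdamu_2} alone; one should use $\ell\le\lambda_q^{-1}$ from \eqref{e:ordering_params} and check that $\lambda_q^{-(1-2\alpha)}\le\delta_q^{\sfrac12}$, i.e.\ $cb(1-2\alpha)\ge\frac12$, which indeed holds under the standing assumptions $c>\max\{\frac52,\frac{3-2\alpha}{2(1-2\alpha)}\}$, $b>1$. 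With these two corrections your argument is sound and follows the same route as \cite{BDIS}.
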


\section{Estimates on the energy}\label{s:estimates_energy}

\begin{lemma}[Estimate on the energy]\label{l:energy}
For any $\varepsilon>0$ we have
\begin{align}
\left|e(t)(1-\delta_{q+2})-\int_{\T^3}|v_{q+1}|^2\,dx\right| &\leq  \frac{\|e'\|_0}{\mu} + 
C \frac{\delta_{q+1}\delta_q^{\sfrac{1}{2}}\lambda_q}{\mu}  \nonumber \\ & +C\frac{\delta_{q+1}^{\sfrac{1}{2}} \delta_q^{\sfrac{1}{2}}\lambda_q}{\lambda_{q+1}} +C \frac{\lambda_q^{2\alpha+\varepsilon} \delta_q^{\alpha+\varepsilon}}{\mu}\, ,\label{e:energy}
\end{align}
where $C$ is an absolute constant.
\end{lemma}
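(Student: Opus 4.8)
The plan is to compute $\int_{\T^3} |v_{q+1}|^2\,dx$ by decomposing $v_{q+1} = v_q + w_o + w_c$ and controlling each contribution. First I would write
\[
\int_{\T^3} |v_{q+1}|^2\,dx = \int_{\T^3} |v_q|^2\,dx + 2\int_{\T^3} v_q\cdot(w_o+w_c)\,dx + \int_{\T^3} |w_o+w_c|^2\,dx\,,
\]
and then expand the last term as $\int |w_o|^2 + 2\int w_o\cdot w_c + \int |w_c|^2$. The corrector terms are negligible: by the estimate $\|w_c\|_0 \leq C\delta_{q+1}^{\sfrac12}\delta_q^{\sfrac12}\lambda_q/\mu$ from \eqref{e:corrector_est} together with $\|w_o\|_0 \leq M\delta_{q+1}^{\sfrac12}$, the mixed term and $\int|w_c|^2$ are bounded by $C\delta_{q+1}\delta_q^{\sfrac12}\lambda_q/\mu$, which is one of the terms on the right-hand side of \eqref{e:energy}.

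The heart of the matter is the two terms $\int v_q\cdot(w_o+w_c)\,dx$ and $\int |w_o|^2\,dx$. For the first, the key is that $w_o$ (and $w_c$) is a sum of terms oscillating at frequency $\lambda_{q+1}$ with smooth amplitudes, against which the low-frequency $v_q$ (bounded in $C^1$ by $2M\delta_q^{\sfrac12}\lambda_q$) integrates to something small: a stationary-phase / integration-by-parts argument gives $|\int v_q \cdot w_o\,dx| \leq C\delta_{q+1}^{\sfrac12}\delta_q^{\sfrac12}\lambda_q/\lambda_{q+1}$, matching the third term on the right. Here one should exploit that $e^{i\lambda_{q+1}k\cdot\Phi_l}$ oscillates genuinely (its phase gradient is $\sim\lambda_{q+1}$ since $D\Phi_l$ is close to the identity by \eqref{e:phi_l_1}), and that $\Lambda$ contains no $k=0$. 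For $\int|w_o|^2$, I would use Lemma \ref{l:doublesum}: since $|B_k|^2 = \tfrac12\mathrm{tr}(\mathrm{Id}-\tfrac{k}{|k|}\otimes\tfrac{k}{|k|}) \cdot$(something) — more precisely, taking the trace of \eqref{e:doublesum} and using $\mathrm{tr}\,R_{\ell,l} = 3\rho_l - \mathrm{tr}\,\mathring{R}_{\ell,l} = 3\rho_l$ (since $\mathring R_{\ell,l}$ is traceless, being transported from a traceless tensor) and $\sum_l\chi_l^2 = 1$, we get
\[
\int_{\T^3} |w_o|^2\,dx = 3(2\pi)^3 \rho_l \;+\; \int_{\T^3}\sum_{k\neq -k'}\chi_l\chi_{l'}\,\mathrm{tr}(w_{kl}\otimes w_{k'l'})\,dx\,,
\]
where now each off-diagonal term $w_{kl}\otimes w_{k'l'}$ oscillates at frequency $\lambda_{q+1}(k+k')\neq 0$ and the same stationary-phase argument bounds the error by $C\delta_{q+1}\delta_q^{\sfrac12}\lambda_q/\lambda_{q+1}$, which is absorbed (it is smaller than the $\mu$-terms given \eqref{e:conditions_lambdamu_2}).

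It remains to handle the main term $3(2\pi)^3\rho_l$ and compare it with $e(t)(1-\delta_{q+2})$. By the definition of $\rho_l$,
\[
3(2\pi)^3\rho_l = e(l\mu^{-1})(1-\delta_{q+2}) - \int_{\T^3}|v_q|^2(x,l\mu^{-1})\,dx\,,
\]
so the bracketed quantity to estimate becomes (for $t$ with $|\mu t - l|<1$, i.e.\ in the support of $\chi_l$)
\[
e(t)(1-\delta_{q+2}) - 3(2\pi)^3\rho_l - \int_{\T^3}|v_q|^2\,dx = \bigl(e(t)-e(l\mu^{-1})\bigr)(1-\delta_{q+2}) + \int_{\T^3}\bigl(|v_q|^2(x,l\mu^{-1}) - |v_q|^2(x,t)\bigr)\,dx\,.
\]
The first difference is $\leq \|e'\|_0/\mu$ since $|t - l\mu^{-1}| < 1/\mu$. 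The second is controlled by $\|\partial_t v_q\|_0\|v_q\|_0/\mu$; using $\|v_q\|_0\leq 2M$ and the time-derivative estimate — here I would invoke $\|\partial_t v_q\|_0 \lesssim \sum_{j\le q}\delta_j^{\sfrac12}\lambda_j \lesssim \delta_q^{\sfrac12}\lambda_q$ plus the fractional-Laplacian and Reynolds-stress contributions, or more simply bound $\|\partial_t v_q\|_0$ by $C\delta_q^{\sfrac12}\lambda_q + C\|\mathring R_{q-1}\|_1 + C\lambda_q^{2\alpha}\|v_q\|_0 \lesssim \delta_q^{\sfrac12}\lambda_q + \lambda_q^{2\alpha}$ using equation \eqref{fracNSR} — this gives the term $C\delta_{q+1}\delta_q^{\sfrac12}\lambda_q/\mu$ and the term $C\lambda_q^{2\alpha+\varepsilon}\delta_q^{\alpha+\varepsilon}/\mu$, where the $\varepsilon$-loss comes from estimating $\|(-\Delta)^\alpha v_q\|_0 \lesssim \lambda_q^{2\alpha+\varepsilon}\|v_q\|_{C^\varepsilon}$ via a Bernstein-type / Schauder estimate for the fractional Laplacian (note $\|v_q\|_{C^\varepsilon}\lesssim (\delta_q^{\sfrac12}\lambda_q)^\varepsilon \lesssim \delta_q^{\varepsilon}\lambda_q^\varepsilon$-ish after interpolation, hence the shape $\lambda_q^{2\alpha+\varepsilon}\delta_q^{\alpha+\varepsilon}$). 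The main obstacle I anticipate is exactly this last step: getting the precise exponent $\lambda_q^{2\alpha+\varepsilon}\delta_q^{\alpha+\varepsilon}$ requires carefully combining the fractional-Laplacian mapping property with the interpolation between the $C^0$ and $C^1$ bounds \eqref{e:v_C0_iter}–\eqref{e:v_C1_iter} on $v_q$ (or rather on the telescoping sum $\sum w_j$), and being honest about where the $\varepsilon$ enters. Everything else is routine stationary phase plus the triangle inequality, collecting the four error terms that appear on the right-hand side of \eqref{e:energy}.
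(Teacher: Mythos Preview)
Your decomposition and the treatment of the oscillatory terms (stationary phase for the $k\neq -k'$ sums, integration by parts via the curl form of $w$ for the cross term $\int v_q\cdot w$, and the crude bounds on $w_c$) are all correct and match the paper. The gap is in the step where you compare $\int|v_q|^2(x,t)\,dx$ with $\int|v_q|^2(x,l\mu^{-1})\,dx$.

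You propose to bound this difference by $\|\partial_t v_q\|_0\|v_q\|_0/\mu$. But $\|v_q\|_0\sim 1$ (not $\delta_{q+1}^{\sfrac12}$!) and $\|\partial_t v_q\|_0\lesssim \delta_q^{\sfrac12}\lambda_q$, so the pointwise route yields only $C\delta_q^{\sfrac12}\lambda_q/\mu$. This is missing the crucial factor $\delta_{q+1}$ in the claimed term $C\delta_{q+1}\delta_q^{\sfrac12}\lambda_q/\mu$, and one checks that the weaker bound can never be made $\leq \tfrac14\delta_{q+2}$ with the parameter choices of the scheme (at $b=1$ the relevant exponent is strictly positive). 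So the lemma as stated does not follow from your argument, and the iteration would not close.

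The fix is not to estimate $\partial_t v_q$ pointwise but to use the \emph{energy identity} coming from the equation \eqref{fracNSR}: multiplying by $v_q$ and integrating,
\[
\frac{d}{dt}\int_{\T^3}|v_q|^2\,dx \;=\; -2\int_{\T^3} Dv_q:\mathring{R}_q\,dx \;-\; 2\int_{\T^3}|(-\Delta)^{\sfrac{\alpha}{2}}v_q|^2\,dx\,,
\]
since the transport term $\int v_q\cdot\div(v_q\otimes v_q)$ and the pressure term $\int v_q\cdot\nabla p_q$ vanish by incompressibility. Now the first integrand is bounded by $\|v_q\|_1\|\mathring R_q\|_0\leq C\delta_q^{\sfrac12}\lambda_q\cdot\eta\delta_{q+1}$, and integrating over $|t-l\mu^{-1}|\leq \mu^{-1}$ gives exactly $C\delta_{q+1}\delta_q^{\sfrac12}\lambda_q/\mu$. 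The second term, bounded via Corollary~\ref{vlaplv} and interpolation between \eqref{e:v_C0_iter}--\eqref{e:v_C1_iter}, produces the $C\lambda_q^{2\alpha+\varepsilon}\delta_q^{\alpha+\varepsilon}/\mu$ contribution. The point is that the large transport and pressure contributions to $\partial_t v_q$ are pure divergences and disappear once you pass to the integrated energy; your pointwise bound throws this cancellation away.
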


The proof of Lemma \ref{l:energy} is similar to the one of the analogous Lemma 4.1 in \cite{BDIS}. However we include the proof for the reader's convenience because:
\begin{itemize}
\item the additional dissipative term alters the argument at a certain point;
\item we need the specific dependence of the estimates upon the energy profile $e$, which in \cite{BDIS} is not taken into account.
\end{itemize}

\begin{proof} Define
$$
\bar{e}(t):=3(2\pi)^3\sum_l\chi_l^2(t)\rho_l.
$$
Using Lemma \ref{l:doublesum} we then have
\begin{align}
|w_o|^2 &=\sum_l \chi_l^2\tr R_{\ell,l} + \sum_{(k,l), (k', l'), k\neq -k^{\prime}} \chi_l\chi_{l^{\prime}} w_{kl} \cdot w_{k,l'}\nonumber\\
&= (2\pi)^{-3}\bar{e} + \sum_{(k,l), (k',l'), k\neq - k'} \chi_l \chi_{k'} a_{kl} a_{k'l'} \phi_{kl} \phi_{k'l'} e^{i\lambda_{q+1} (k+k') \cdot x}\, .
\end{align}
Observe that $\bar{e}$ is a function of $t$ only and that, since $(k+k')\neq 0$ in the sum above, we can apply
Proposition \ref{p:stat_phase}(i) with $m=1$. From Lemma \ref{l:ugly_lemma} we then deduce
\begin{align}
& \left|\int_{\T^3}|w_o|^2\,dx-\bar{e} (t)\right|\leq C\frac{\delta_{q+1} \delta_q^{\sfrac{1}{2}} \lambda_q}{\mu}  +  C \frac{\delta_{q+1}\lambda_q}{\lambda_{q+1}} \, .\label{e:energy1}
\end{align}
Next we recall \eqref{e:w_compact_form}, integrate by parts and use \eqref{e:L} and \eqref{e:phi} to reach
\begin{equation}\label{e:energy2}
\left|\int_{\T^3}v_{q+1}\cdot w\,dx\right|\leq C \frac{\delta_{q+1}^{\sfrac{1}{2}} \delta_q^{\sfrac{1}{2}}\lambda_q}{\lambda_{q+1}}\, .
\end{equation}
Note also that by \eqref{e:corrector_est} we have
\begin{align}
\int_{\T^3}|w_c|^2+\abs{w_c w_o}\,dx&\leq C\frac{\delta_{q+1} \delta_q^{\sfrac{1}{2}} \lambda_q}{\mu}\,  .\label{e:energy3}
\end{align}
Summarizing, so far we have achieved
\begin{align}
&\left|\int_{\T^3} |v_{q+1}|^2 \,dx- \left(\bar{e} (t) +\int_{\T^3} |v_q|^2\,dx\right)\right| \stackrel{\eqref{e:energy2}}{\leq} 
\left|\int_{\T^3} |w|^2\,dx - \bar{e} (t)\right| + C \frac{\delta_{q+1}^{\sfrac{1}{2}} \delta_q^{\sfrac{1}{2}}\lambda_q}{\lambda_{q+1}}\nonumber\\
&\stackrel{\eqref{e:energy3}}{\leq} \left|\int_{\T^3} |w_o|^2 \,dx- \bar{e} (t)\right| + C \frac{\delta_{q+1}^{\sfrac{1}{2}} \delta_q^{\sfrac{1}{2}}\lambda_q}{\lambda_{q+1}} + C\frac{\delta_{q+1} \delta_q^{\sfrac{1}{2}} \lambda_q}{\mu}\nonumber\\
&\stackrel{\eqref{e:energy1}}{\leq} C \frac{\delta_{q+1}^{\sfrac{1}{2}} \delta_q^{\sfrac{1}{2}}\lambda_q}{\lambda_{q+1}} + C\frac{\delta_{q+1} \delta_q^{\sfrac{1}{2}} \lambda_q}{\mu}\,.\label{e:energy10}
\end{align}

Next, recall that
\begin{align*}
\bar{e} (t) &= 3 (2\pi)^3\sum_l \chi_l^2  \rho_l\\
&=(1-\delta_{q+2})\sum_l \chi_l^2 e\left(\frac{\mu}{l}\right)-\sum_l\chi_l^2\int_{\T^3}|v_q(x,l\mu^{-1})|^2\,dx\, .
\end{align*}
Since $\abs{t-\frac{l}{\mu}}<\mu^{-1}$ on the support of $\chi_l$ and since $\sum_l \chi_l^2 =1$, we have
\[
\left|e (t) - \sum_l \chi_l^2 e\left(\frac{l}{\mu}\right)\right| \leq \|e'\|_0 \mu^{-1}\, .
\]
Moreover, using the Navier Stokes-Reynolds equation, we can compute
\begin{align}
&\int_{\T^3} \left(|v_q(x,t)|^2-\left|v_q\left(x,l\mu^{-1}\right)\right|^2\right)\, dx = \int_{\frac{l}{\mu}}^t \int_{\T^3} \partial_t |v_q|^2\nonumber\\
&=- \int_{\frac{l}{\mu}}^t \int_{\T^3} {\rm div}\, \left(v_q \left(|v_q|^2 + 2p_q\right)\right) + 2 \int_{\frac{l}{\mu}}^t \int_{\T^3} v_q \cdot \div \mathring{R}_q -2\int_{\frac{l}{\mu}}^t \int_{\mathbb{T}^3}|(-\Delta)^{\frac{\alpha}{2}} v_q|^2 \nonumber \\
&= -   2 \int_{\frac{l}{\mu}}^t \int_{\T^3} Dv_q: \mathring{R}_q -2\int_{\frac{l}{\mu}}^t \int_{\mathbb{T}^3}|(-\Delta)^{\frac{\alpha}{2}} v_q|^2 .\nonumber
\end{align}
Thus, for $\left|t-\frac{l}{\mu}\right|\leq \mu^{-1}$ we conclude
\[
\left|\int_{\T^3} |v_q(x,t)|^2-\left|v_q(x,l\mu^{-1})\right|^2\, dx \right| \leq C \frac{\delta_{q+1}\delta_q^{\sfrac{1}{2}}\lambda_q}{\mu} +C \frac{\lambda_q^{2\alpha+\varepsilon} \delta_q^{\alpha+\varepsilon}}{\mu} .
\]
Using again $\sum \chi_l^2=1$, we then conclude
\begin{equation}\label{e:energy4}
\left|e(t) (1-\delta_{q+2}) - \left(\bar{e} (t)+ \int_{\T^3} |v_q(x,t)|^2 \, dx\right)\right| \leq \frac{\|e'\|_0}{\mu} + 
C \frac{\delta_{q+1}\delta_q^{\sfrac{1}{2}}\lambda_q}{\mu} +C \frac{\lambda_q^{2\alpha+\varepsilon} \delta_q^{\alpha+\varepsilon}}{\mu}\, .
\end{equation}
The desired conclusion \eqref{e:energy} follows from \eqref{e:energy10} and \eqref{e:energy4}, indeed by triangular inequality
\begin{align*}
&\left|e(t)(1-\delta_{q+2})-\int_{\T^3}|v_{q+1}|^2\,dx\right| \leq \left|e(t) (1-\delta_{q+2}) - \left(\bar{e} (t)+ \int_{\T^3} |v_q(x,t)|^2 \, dx\right)\right|\\
&+ \left|\int_{\T^3} |v_{q+1}|^2 \,dx- \left(\bar{e} (t) +\int_{\T^3} |v_q|^2\,dx\right)\right|\leq  \frac{\| e'\|_0}{\mu} + 
C \frac{\delta_{q+1}\delta_q^{\sfrac{1}{2}}\lambda_q}{\mu}  \nonumber \\ & +C\frac{\delta_{q+1}^{\sfrac{1}{2}} \delta_q^{\sfrac{1}{2}}\lambda_q}{\lambda_{q+1}} +C \frac{\lambda_q^{2\alpha+\varepsilon} \delta_q^{\alpha+\varepsilon}}{\mu}.
\end{align*}
\end{proof}

\section{Estimates on the Reynolds stress}\label{s:reynolds}

In this section we bound the new Reynolds Stress $\mathring R_{q+1}$. The bounds for the tensors $R^0, \ldots, R^5$ are
essentially the same as in \cite{BDIS}, with the only exception that we have kept track of the dependence of the constants appearing in the estimates. 

Recalling the definition $ R^6=\mathcal R ( (- \Delta)^\alpha w )$ of the dissipative part of the error we can easly guess why we have the restriction  $\alpha \in(0,\sfrac{1}{2})$ with the following heuristic argument.
The oscillations of the map $w \sim \delta_{q+1}^{\sfrac{1}{2}}$ are driven by the parameter $\lambda_{q+1}$. The two operators $\mathcal R$ and $(- \Delta)^\alpha$ are differentials operators of order $-1$ and $2\alpha$ respectively, so the \textit{heuristic} gives us  $ R^6 \sim \delta_{q+1}^{\sfrac{1}{2}} \lambda_{q+1}^{2\alpha-1}$, so that if $\alpha < \sfrac{1}{2}$ we can make 
$ R^6 \sim \delta_{q+2}$, which is the condition required for our inductive scheme.

\begin{proposition}\label{p:R} For any choice of small positive numbers $\eps$ and $\beta$, there is a constant $C$ (depending only upon the latter parameters) such that, if $\mu$, $\lambda_{q+1}$ and $\ell$ satisfy the conditions \eqref{e:conditions_lambdamu_2}, then we have
\begin{align}
\|R^0\|_0 +\frac{1}{\lambda_{q+1}}\|R^0\|_1+\frac{1}{\mu}\|D_tR^0\|_0&\leq C\frac{\delta_{q+1}^{\sfrac{1}{2}} \mu}{\lambda_{q+1}^{1-\eps}}+\frac{\delta_{q+1}^{\sfrac12}\delta_q\lambda_q}{\lambda_{q+1}^{1-\eps}\mu\ell}\, ,\label{e:R0}\\
\|R^1\|_0 +\frac{1}{\lambda_{q+1}}\|R^1\|_1+\frac{1}{\mu}\|D_tR^1\|_0&\leq C \frac{\delta_{q+1}\delta_q^{\sfrac{1}{2}} \lambda_q \lambda_{q+1}^\eps}{\mu}\, ,\label{e:R1}\\
\|R^2\|_0 +\frac{1}{\lambda_{q+1}}\|R^2\|_1+\frac{1}{\mu}\|D_tR^2\|_0&\leq C \frac{\delta_{q+1}\delta_q^{\sfrac{1}{2}} \lambda_q}{\mu}\, , \label{e:R2}\\
\|R^3\|_0 +\frac{1}{\lambda_{q+1}}\|R^3\|_1+\frac{1}{\mu}\|D_tR^3\|_0&\leq C \delta_{q+1}^{\sfrac{1}{2}} \delta_q^{\sfrac{1}{2}} \lambda_q \ell\, ,\label{e:R3}\\
\|R^4\|_0 +\frac{1}{\lambda_{q+1}}\|R^4\|_1+\frac{1}{\mu}\|D_tR^4\|_0&\leq C \frac{\delta_{q+1} \delta_q^{\sfrac{1}{2}}\lambda_q}{\mu} + C\delta_{q+1} \lambda_q \ell\, ,\label{e:R4}\\
\|R^5\|_0 +\frac{1}{\lambda_{q+1}}\|R^5\|_1+\frac{1}{\mu}\|D_tR^5\|_0&\leq C \frac{\delta_{q+1} \delta_q^{\sfrac{1}{2}}\lambda_q}{\mu}\, , \label{e:R5} \\
\|R^6\|_0 +\frac{1}{\lambda_{q+1}}\|R^6\|_1+\frac{1}{\mu}\|D_tR^6\|_0&\leq   C \frac{\delta_{q+1}^{\sfrac{1}{2}}\lambda_{q+1}^{2\alpha+\varepsilon}}{\mu}\, .\label{e:R6}
\end{align}
Thus
\begin{equation}
\begin{split}
&\|\mathring{R}_{q+1}\|_0+\frac{1}{\lambda_{q+1}}\|\mathring{R}_{q+1}\|_1 +\frac{1}{\mu}\|D_t\mathring{R}_{q+1}\|_0\leq \\
&\leq C \left(\frac{\delta_{q+1}^{\sfrac{1}{2}} \mu}{\lambda_{q+1}^{1-\eps}} + \frac{\delta_{q+1} \delta_q^{\sfrac{1}{2}} \lambda_q \lambda_{q+1}^\eps}{\mu} +
\delta_{q+1}^{\sfrac{1}{2}} \delta_q^{\sfrac{1}{2}} \lambda_q \ell+\frac{\delta_{q+1}^{\sfrac12}\delta_q\lambda_q}{\lambda_{q+1}^{1-\eps}\mu\ell} +C \frac{\delta_{q+1}^{\sfrac{1}{2}}\lambda_{q+1}^{2\alpha+\varepsilon}}{\mu}\right)\, ,\label{e:allR}
\end{split}
\end{equation}
and, moreover,
\begin{multline}
\|\partial_t \mathring{R}_{q+1} + v_{q+1}\cdot \nabla \mathring{R}_{q+1}\|_0\leq \\
\leq C \delta_{q+1}^{\sfrac{1}{2}}\lambda_{q+1} \left(\frac{\delta_{q+1}^{\sfrac{1}{2}} \mu}{\lambda_{q+1}^{1-\eps}} 
+ \frac{\delta_{q+1} \delta_q^{\sfrac{1}{2}} \lambda_q \lambda_{q+1}^\eps}{\mu} 
+ \delta_{q+1}^{\sfrac{1}{2}} \delta_q^{\sfrac{1}{2}} \lambda_q \ell+\frac{\delta_{q+1}^{\sfrac12}\delta_q\lambda_q}{\lambda_{q+1}^{1-\eps}\mu\ell}+C \frac{\delta_{q+1}^{\sfrac{1}{2}}\lambda_{q+1}^{2\alpha+\varepsilon}}{\mu} \right).\label{e:Dt_R_all}
\end{multline}
As in the previous sections, all the constants $C$ appearing in the estimates are absolute constants.
\end{proposition}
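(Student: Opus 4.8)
The plan is to bound each of the seven tensors $R^0,\dots,R^6$ separately in the combined norm $\|\cdot\|_0+\lambda_{q+1}^{-1}\|\cdot\|_1+\mu^{-1}\|D_t\cdot\|_0$ and then to add the bounds. The toolbox is exactly the one of \cite{DS-Inv,BDIS}, namely: (i) the Calder\'on--Zygmund calculus for the operator $\mathcal R$, which lowers the order by one, together with the stationary-phase estimates of \cite{DS-Inv,BDIS} for integrands of the form $a(x)e^{i\lambda_{q+1}k\cdot\Phi_l(x)}$ with $k\ne 0$; (ii) the amplitude, phase and corrector estimates \eqref{e:L}--\eqref{e:W_est_N} and the transport-derivative estimates of Lemma \ref{l:ugly_lemma_2}; (iii) the algebraic identity of Lemma \ref{l:doublesum}, which is exactly what makes $R^1$ admissible; (iv) the mollification bounds \eqref{e:v-v_ell}--\eqref{e:R_ell} and the transport/commutator estimates of Proposition \ref{p:transport_derivatives}; and, only for $R^6$, a bound on $(-\Delta)^\alpha$ of a single modulated Beltrami mode. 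Throughout one repeatedly discards subleading contributions using the hierarchy of length scales \eqref{e:conditions_lambdamu_2}--\eqref{e:ordering_params}. The losses $\lambda_{q+1}^{\eps}$ in \eqref{e:R0}, \eqref{e:R1} and \eqref{e:R6} arise because the amplitudes $a_{kl}\phi_{kl}$ are not pure exponentials: each spatial derivative of $\phi_{kl}$ costs $\lambda_{q+1}^{1-\beta}$ by \eqref{e:phi}, and taking the number of derivatives in the stationary-phase lemma large enough turns this into an arbitrarily small power.

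For $R^0$ I would write $\partial_t w+v_\ell\cdot\nabla w+w\cdot\nabla v_\ell=D_t w+w\cdot\nabla v_\ell$; since each $w_{kl}$ is transported by $v_\ell$ (cf.\ \eqref{e:Dt_w_kl=0}), one has $D_t w_o=\sum\dot\chi_l w_{kl}$ with amplitude $\lesssim\mu\delta_{q+1}^{\sfrac12}$, while $D_t w_c$ is controlled by \eqref{e:Dt_wc} and $w\cdot\nabla v_\ell$ by \eqref{e:W_est_0}, \eqref{e:v_ell}; each summand oscillates at frequency $\lambda_{q+1}$, so $\mathcal R$ contributes a factor $\lambda_{q+1}^{-1}$ and the first term of \eqref{e:R0} follows, while the second term comes from the $\mu^{-1}\|D_t R^0\|_0$ piece, specifically from $\mathcal R(w\cdot\nabla D_t v_\ell)$ estimated via \eqref{e:Dt_v} with $N=1$. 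For $R^1$ I would invoke Lemma \ref{l:doublesum} to replace the integrand by $\sum_{k\ne-k'}\chi_l\chi_{l'}w_{kl}\otimes w_{k'l'}$; following \cite[Proposition 6.1]{DS-Inv}, the Beltrami identities make the top-order part of $\div(w_{kl}\otimes w_{k'l'})$ a gradient, absorbed into $p_{q+1}$, so that $\mathcal R\div$ is effectively applied to a term carrying one derivative of the amplitude, which by \eqref{e:Da} and \eqref{e:phi} produces the factor $\delta_q^{\sfrac12}\lambda_q/\mu$ in \eqref{e:R1}. The tensors $R^2,R^3,R^4,R^5$ carry no $\mathcal R$ and are bounded directly: $R^2$ by $\|w_o\|_0\|w_c\|_0+\|w_c\|_0^2$ via \eqref{e:W_est_0} and \eqref{e:corrector_est}; $R^3$ by $\|w\|_0\|v_q-v_\ell\|_0$ via \eqref{e:v-v_ell}; $R^4$ by \eqref{e:R-R_ell}, \eqref{e:R_ell} and, for the $D_t$ part, by \eqref{e:DtR_old} plus the commutator of $v_\ell\cdot\nabla$ with the mollification; and $R^5$ by writing $\mathring R_\ell-\mathring R_{\ell,l}=\int_{l/\mu}^{t}D_t\mathring R_\ell$, using $|t-l/\mu|\lesssim\mu^{-1}$ on $\supp\chi_l$, \eqref{e:DtR_old} and a commutator estimate. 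The $\lambda_{q+1}^{-1}\|\cdot\|_1$ and $\mu^{-1}\|D_t\cdot\|_0$ parts are obtained the same way, differentiating once in space or applying $D_t$ and booking the extra $\lambda_{q+1}$ or $\mu$ against the corresponding weight.

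The genuinely new term, and the one I expect to be the main obstacle, is $R^6=\mathcal R((-\Delta)^\alpha w)$. Writing $w$ through \eqref{e:w_Lform} as a finite sum of modulated Beltrami modes, the needed ingredient is an estimate of the form $\|(-\Delta)^\alpha(a\,e^{i\lambda_{q+1}k\cdot\Phi_l})\|_0\lesssim\lambda_{q+1}^{2\alpha+\eps}\|a\|_0+(\text{lower-order terms})$; this is the point where the order $2\alpha$ of the dissipative operator enters, and applying $\mathcal R$ then gives $\|R^6\|_0\lesssim\delta_{q+1}^{\sfrac12}\lambda_{q+1}^{2\alpha-1+\eps}$, which by $\mu\le\delta_{q+1}^{\sfrac12}\lambda_{q+1}$ (see \eqref{e:ordering_params}) is dominated by the right-hand side of \eqref{e:R6}. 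For the transport-derivative part one commutes $D_t$ past $\mathcal R(-\Delta)^\alpha$: the leading piece $\mathcal R(-\Delta)^\alpha(D_t w)$ is acceptable because $\|D_t w\|_0\lesssim\delta_{q+1}^{\sfrac12}\mu$, and the commutators $[v_\ell\cdot\nabla,\mathcal R(-\Delta)^\alpha]$ are operators of order $2\alpha-1$ whose symbols cost one derivative of $v_\ell$, hence are controlled by $\delta_q^{\sfrac12}\lambda_q$ and the ordering of scales. Quantifying precisely the action of $(-\Delta)^\alpha$ on a high-frequency modulated wave, together with this commutator calculus, is the delicate step, and it is exactly here that $\alpha<\frac12$ is used: only then is $\delta_{q+1}^{\sfrac12}\lambda_{q+1}^{2\alpha-1}$ genuinely smaller than $\delta_{q+1}^{\sfrac12}$, so that (after the eventual choice of parameters) it can be absorbed into $\delta_{q+2}$.

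Finally, \eqref{e:allR} is just the triangle inequality for $\mathring R_{q+1}=\sum_{i=0}^6 R^i$ after discarding, via \eqref{e:conditions_lambdamu_2}, the bounds that are dominated by the five retained ones (for instance $\delta_{q+1}\lambda_q\ell\le\delta_{q+1}^{\sfrac12}\delta_q^{\sfrac12}\lambda_q\ell$ since $\delta_{q+1}\le1$, and the $R^2$, $R^5$ bounds are already of the form $\delta_{q+1}\delta_q^{\sfrac12}\lambda_q/\mu$). For \eqref{e:Dt_R_all} I would write $v_{q+1}\cdot\nabla=v_\ell\cdot\nabla+\big((v_q-v_\ell)+w\big)\cdot\nabla$: the first part is exactly $D_t$ and is already estimated, while the second is bounded by $\|(v_q-v_\ell)+w\|_0\,\|\nabla\mathring R_{q+1}\|_0\lesssim\delta_{q+1}^{\sfrac12}\lambda_{q+1}\big(\|\mathring R_{q+1}\|_0+\lambda_{q+1}^{-1}\|\mathring R_{q+1}\|_1\big)$, using \eqref{e:v-v_ell}, \eqref{e:W_est_0}, \eqref{e:corrector_est} for the first factor and \eqref{e:allR} for the second — which is precisely the prefactor $\delta_{q+1}^{\sfrac12}\lambda_{q+1}$ appearing in \eqref{e:Dt_R_all}.
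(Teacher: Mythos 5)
Your plan is sound for $R^0,\dots,R^5$, and indeed the paper does not reprove these but simply refers to \cite[Proposition 5.1]{BDIS}; your sketches for those tensors, and your treatment of \eqref{e:allR} and \eqref{e:Dt_R_all}, match the paper's.

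The genuine issue is $R^6$, and it is an order-of-operations problem. You propose to first prove $\|(-\Delta)^\alpha(a\,e^{i\lambda_{q+1}k\cdot\Phi_l})\|_0\lesssim\lambda_{q+1}^{2\alpha+\eps}\|a\|_0$ and then ``apply $\mathcal R$'' to gain the extra factor $\lambda_{q+1}^{-1}$. That last step does not follow: the $\lambda_{q+1}^{-1}$ gain from $\mathcal R$ comes from the stationary-phase estimate \eqref{e:R(F)}, which needs its input in the form $a(x)e^{i\lambda_{q+1}k\cdot x}$ with $a$ slowly varying. The fractional Laplacian is nonlocal, so $(-\Delta)^\alpha(a\,e^{i\lambda_{q+1}k\cdot\Phi_l})$ is \emph{not} cleanly of that form (there is no exact product/chain rule); without a further pseudodifferential decomposition the Schauder estimate for $\mathcal R$ only gives boundedness, not the $\lambda_{q+1}^{-1}$ factor, and you would end up with $\delta_{q+1}^{1/2}\lambda_{q+1}^{2\alpha+\eps}$ rather than $\delta_{q+1}^{1/2}\lambda_{q+1}^{2\alpha-1+\eps}$.

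The paper avoids this entirely by reversing the order. Since $\mathcal R$ and $(-\Delta)^\alpha$ commute, one writes $R^6=(-\Delta)^\alpha\mathcal R w$, estimates $\|\mathcal R w\|_0\lesssim\delta_{q+1}^{1/2}\lambda_{q+1}^{\eps-1}$ and $\|\mathcal R w\|_1\lesssim\delta_{q+1}^{1/2}\lambda_{q+1}^{\eps}$ by the stationary-phase Proposition \ref{p:stat_phase} applied to the genuinely modulated representation \eqref{e:w_Lform}, and then invokes the Roncal bound together with $C^\theta$-interpolation,
\begin{equation*}
\|(-\Delta)^\alpha\mathcal R w\|_0\leq C\,[\mathcal R w]_{2\alpha+\eps}\leq C\,\|\mathcal R w\|_0^{1-2\alpha-\eps}\,\|\mathcal R w\|_1^{2\alpha+\eps}\leq C\,\delta_{q+1}^{1/2}\lambda_{q+1}^{2\alpha-1+\eps}\, .
\end{equation*}
This is the decisive idea you are missing: never apply $(-\Delta)^\alpha$ directly to a high-frequency phase. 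The same ordering is used for $\|R^6\|_1$ (via $\|\mathcal R w\|_2$), and for $D_tR^6$ one splits $D_t(-\Delta)^\alpha\mathcal R w=(-\Delta)^\alpha D_t\mathcal R w-[(-\Delta)^\alpha,D_t]\mathcal R w$, handling the first term by interpolating $\|D_t\mathcal R w\|_0$ and $\|D_t\mathcal R w\|_1$ (which come from the usual commutator estimates of Proposition \ref{p:commutator}) and the second by a crude bound, since the paper notes that exploiting the commutator cancellation would not improve the final result anyway. Your plan for $[(-\Delta)^\alpha,D_t]$ is in the right spirit but, again, must act on $\mathcal R w$ (after the $\lambda_{q+1}^{-1}$ gain has been secured), not on $w$.

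A minor point: the $\lambda_{q+1}^{\eps}$ losses in \eqref{e:R0}, \eqref{e:R1} come primarily from the Hölder exponent $\alpha$ in the Schauder/stationary-phase lemma (Proposition \ref{p:stat_phase}(ii)), not solely from the $\lambda_{q+1}^{1-\beta}$ growth of $[\phi_{kl}]_N$, though both are present; your framing conflates the two.
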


\begin{proof} The arguments for the estimates \eqref{e:R0}. \eqref{e:R1}, \eqref{e:R2}, \eqref{e:R3}, \eqref{e:R4} and \eqref{e:R5} are 
the same as those of \cite{BDIS} for the same estimates claimed in Proposition 5.1 therein. We therefore give the proof only for the remaining ones.

\medskip

\noindent{\bf Estimates on $R^6$.} Since $(-\Delta)^\alpha$ and the operator $\RR$ commute, the idea is to obtain an estimate for both $\| \RR w\|_0$ and $\|\RR w\|_1$ and then interpolate
\begin{equation}
\| (-\Delta)^\alpha \RR w \|_0  \leq C [\RR w]_{2\alpha+\varepsilon} \leq C \| \RR w\|_0^{1-2\alpha-\varepsilon}\|\RR w\|_1^{2\alpha+\varepsilon}.
\end{equation}
Remember that 
\[
w=\sum_{k,l} \chi_l L_{kl} \phi_{kl} e^{i \lambda_{q+1}k \cdot x}=\sum_{k,l} B_{kl} e^{i \lambda_{q+1}k \cdot x}.
\]
First of all observe that
\begin{equation}
[B_{kl}]_N \leq  C( \delta_{q+1}^{\sfrac{1}{2}} \ell^{-N}+\delta_{q+1}^{\sfrac{1}{2}} \lambda_{q+1}^{N(1-\beta)}) \leq C \delta_{q+1}^{\sfrac{1}{2}} \lambda_{q+1}^{N(1-\beta)},
\end{equation}
thus from Proposition \ref{p:stat_phase} we get, choosing $N$ such that $N \beta \geq 1$
\begin{multline}
\| \RR w\|_0 \leq C \bigg( \frac{1}{\lambda_{q+1}^{1-\varepsilon}} \|B_{kl} \|_0 + \frac{1}{\lambda_{q+1}^{N-\varepsilon}}[B_{kl}]_N +\frac{1}{\lambda_{q+1}^{N}}[B_{kl}]_{N+\varepsilon} \bigg) \\
\leq C\bigg( \frac{\delta_{q+1}^{\sfrac{1}{2}}}{\lambda_{q+1}^{1-\varepsilon}}+ \delta_{q+1}^{\sfrac{1}{2}} \lambda_{q+1}^{\varepsilon -N\beta} +\delta_{q+1}^{\sfrac{1}{2}} \lambda_{q+1}^{\varepsilon -N\beta -\varepsilon \beta} \bigg) \leq C \delta_{q+1}^{\sfrac{1}{2}} \lambda_{q+1}^{\varepsilon -1}. \label{estC0Rw}
\end{multline}
Analogously we get 
\begin{equation}\label{estC1Rw}
\| \RR w \|_1 \leq C \delta_{q+1}^{\sfrac{1}{2}} \lambda_{q+1}^{\varepsilon}.
\end{equation}
Combining \eqref{estC0Rw} and \eqref{estC1Rw}, by interpolation we get
\begin{equation}
\| (-\Delta)^\alpha \RR w \|_0 \leq C \delta_{q+1}^{\sfrac{1}{2}} \lambda_{q+1}^{2\alpha -1+\varepsilon}.
\end{equation}
Similarly, with analogous estimates on $\|\RR w\|_2$ and again by interpolation we easily conclude
\begin{equation}
\| (-\Delta)^\alpha \RR w \|_1 \leq C \delta_{q+1}^{\sfrac{1}{2}} \lambda_{q+1}^{2\alpha +\varepsilon}
\end{equation}

\medskip

\noindent{\bf Estimates on $D_t R^6$.} Since $(-\Delta)^\alpha$ and $D_t= \partial_t+v_\ell \cdot \nabla$ do not commute, we have
\begin{equation}
D_t (-\Delta)^\alpha \RR w= (-\Delta)^\alpha D_t \RR w - [(-\Delta)^\alpha , D_t] \RR w.
\end{equation}

For the first term $(-\Delta)^\alpha D_t \RR w$ we proceed, as usually, by estimate  $D_t \RR $ in $C^0$ and in $C^1$ and using interpolation.
First we compute
\[
D_t w= \sum_{kl} (\partial_t \chi_l L_{kl}+ \chi_l D_t L_{kl}) \phi_{kl} e^{i \lambda_{q+1}k \cdot x}:= \sum_{kl} B_{kl}^{'}e^{i \lambda_{q+1}k \cdot x}.
\]
We have that 
\begin{equation}
\| B_{kl}^{'}\|_0 \leq \mu \|L_{kl}\|_0 +\|D_t L_{kl}\|_0 \leq C(\mu \delta_{q+1}^{\sfrac{1}{2}} + \delta_{q+1}^{\sfrac{1}{2}} \delta_{q}^{\sfrac{1}{2}} \lambda_q ) \leq C  \mu \delta_{q+1}^{\sfrac{1}{2}},
\end{equation}
and similarly, $\forall N \geq 1$
\begin{equation}
\| B_{kl}^{'}\|_N \leq C \mu \delta_{q+1}^{\sfrac{1}{2}} \lambda_{q+1}^{N(1-\beta)}.
\end{equation}
Using the usual commutator structures we have
\[
D_t \RR w = \sum_{kl} [v_{\ell}, \RR]\bigg(D B_{kl} e^{i \lambda_{q+1}k \cdot x}\bigg) + i \lambda_{q+1} [v_{\ell} \cdot k, \RR]\bigg( B_{kl} e^{i \lambda_{q+1}k \cdot x}\bigg) +\RR \bigg( B_{kl}^{'} e^{i \lambda_{q+1}k \cdot x}\bigg).
\]
 By  Proposition \ref{p:stat_phase} and choosing $N$ sufficiently large we have
\begin{equation}
\left\| \RR \bigg( B_{kl}^{'} e^{i \lambda_{q+1}k \cdot x}\bigg) \right\|_0 \leq C \bigg(\frac{\|B_{kl}^{'}\|_0}{\lambda_{q+1}^{1-\varepsilon}}+\frac{\|B_{kl}^{'}\|_N}{\lambda_{q+1}^{N-\varepsilon}}+\frac{\|B_{kl}^{'}\|_{N+\varepsilon}}{\lambda_{q+1}^{N}} \bigg) \leq  C \mu \delta_{q+1}^{\sfrac{1}{2}} \lambda_{q+1}^{-1},\label{termine1}
\end{equation}
and moreover from Proposition \ref{p:commutator}
\begin{multline}
\left\|  [v_{\ell}, \RR]\bigg(D B_{kl} e^{i \lambda_{q+1}k \cdot x}\bigg)\right\|_0 \leq C\lambda_{q+1}^{\varepsilon-2} \|D B_{kl} \|_0 \|v_\ell\|_1 \\
+C \lambda_{q+1}^{\varepsilon-N} \bigg( \| B_{kl} \|_{N+\varepsilon} \|v_\ell\|_{1+\varepsilon} + \| B_{kl} \|_{1+\varepsilon} \|v_\ell\|_{N+\varepsilon} \bigg) \leq  C \mu \delta_{q+1}^{\sfrac{1}{2}} \lambda_{q+1}^{-1}.\label{termine2}
\end{multline}
It is not difficult to see that the same estimate holds also for the last term, i.e.
\begin{equation}
 \left\| i \lambda_{q+1} [v_{\ell} \cdot k, \RR]\bigg( B_{kl} e^{i \lambda_{q+1}k \cdot x}\bigg)\right\|_0 \leq C \mu \delta_{q+1}^{\sfrac{1}{2}} \lambda_{q+1}^{-1}, \label{termine3}
\end{equation} indeed we do not have any derivatives on $B_{kl}$ but we have an extra factor $\lambda_{q+1}$.
Thanks to \eqref{termine1}, \eqref{termine2} and \eqref{termine3} we conclude 
\begin{equation}\label{DtR6C0}
\| D_t \RR w\|_0 \leq C \mu \delta_{q+1}^{\sfrac{1}{2}} \lambda_{q+1}^{-1},
\end{equation}
and, analogously, 
\begin{equation}\label{DtR6C1}
\| D_t \RR w\|_1 \leq C \mu \delta_{q+1}^{\sfrac{1}{2}}.
\end{equation}
Thus from \eqref{DtR6C0}, \eqref{DtR6C1} and by interpolation we conclude

\begin{equation}\label{laplDtR6}
\left\| (-\Delta)^\alpha D_t \RR w \right\|_0 \leq C \mu \delta_{q+1}^{\sfrac{1}{2}} \lambda_{q+1}^{2\alpha +\varepsilon-1}.
\end{equation}
It remains to estimate $[(-\Delta)^\alpha , D_t] \RR w$. Notice that in this commutator we have the obvious cancellation of the time derivative term, so

\begin{multline}
\left\| [(-\Delta)^\alpha , D_t] \RR w \right\|_0 = \left\|  (-\Delta)^\alpha (v_\ell \cdot \nabla ) \RR w- v_\ell \cdot \nabla  (-\Delta)^\alpha \RR w \right\|_0 \\
\leq C \| (v_\ell \cdot \nabla ) \RR w \|_0 ^ {1-2\alpha-\varepsilon}[ (v_\ell \cdot \nabla ) \RR w ]_1 ^ {2\alpha +\varepsilon} + \|v_\ell  \|_0 \| (-\Delta)^\alpha \RR w\|_1 \\
\leq C \| (v_\ell \cdot \nabla ) \RR w \|_0 ^ {1-2\alpha-\varepsilon}[ (v_\ell \cdot \nabla ) \RR w ]_1 ^ {2\alpha +\varepsilon} + M \delta_{q+1}^{\sfrac{1}{2}} \lambda_{q+1}^{2\alpha +\varepsilon},
\end{multline}
but since 

\[
\| (v_\ell \cdot \nabla ) \RR w \|_0 \leq \| v_\ell \|_0 \| \RR w\|_1 \leq C \delta_{q+1}^{\sfrac{1}{2}} \lambda_{q+1}^{\varepsilon},
\] 
and
\[
[ (v_\ell \cdot \nabla ) \RR w ]_1 \leq [v_\ell]_1 \| \RR w\|_1 + \| \RR w \|_2 \leq C\delta_{q}^{\sfrac{1}{2}} \lambda_{q}\delta_{q+1}^{\sfrac{1}{2}} \lambda_{q+1}^{\varepsilon}+ C\delta_{q+1}^{\sfrac{1}{2}} \lambda_{q+1}^{\varepsilon+1} \leq \delta_{q+1}^{\sfrac{1}{2}} \lambda_{q+1}^{\varepsilon+1},
\]
we have that 
\begin{equation}\label{Dtcommut}
\left\| [(-\Delta)^\alpha , D_t] \RR w \right\|_0 \leq C \delta_{q+1}^{\sfrac{1}{2}} \lambda_{q+1}^{2\alpha +\varepsilon}
\end{equation}
Putting together \eqref{laplDtR6} and \eqref{Dtcommut}, since $\mu \leq \lambda_{q+1}$, we finally conclude
\[
\|D_t R^6\|_0 \leq C \delta_{q+1}^{\sfrac{1}{2}} \lambda_{q+1}^{2\alpha +\varepsilon}.
\]
Now \eqref{e:R6} follows since, again, by our choice of the parameter, $\frac{1}{\lambda_{q+1}} \leq \frac{1}{\mu}$.
\medskip
\begin{remark}
In estimating $[(-\Delta)^\alpha , D_t] \RR w$ we did not exploit its commutator nature since an improvement of the \textit{coarse} estimate above would not lead to a better result (in fact the term $\frac{\lambda_q^{2\alpha+\varepsilon} \delta_q^{\alpha+\varepsilon}}{\mu}$ in the energy estimate obstructs the usefulness of any better bound on $R^6$).
\end{remark}
\medskip
\noindent{\bf Conclusion.} \eqref{e:allR} is an obvious consequence of the estimates for the terms $R^0, R^1, \ldots , R^6$. To achieve \eqref{e:Dt_R_all}, observe that
\[
\|\partial_t \mathring{R}_{q+1} + v_{q+1}\cdot \nabla \mathring{R}_{q+1}\|_0 \leq \|D_t \mathring{R}_1\|_0 + \left(\|v_{q+1} -v_\ell\|_0 + \|w\|_0\right)\|\mathring{R}_{q+1}\|_1 \, .
\]
 On the other hand, by \eqref{e:v-v_ell} and \eqref{e:conditions_lambdamu_2}, $\|v_{q+1} -v_\ell\|_0 \leq C \delta_q^{\sfrac{1}{2}} \lambda_q \ell\leq \delta_{q+1}^{\sfrac{1}{2}}$. Moreover, by \eqref{e:W_est_0}, \eqref{e:corrector_est} and \eqref{e:ordering_params} 
$\|w\|\leq \|w_o\|_0 + \|w_c\|_0 \leq  C \delta_{q+1}^{\sfrac{1}{2}}$. Thus, by \eqref{e:allR} we conclude
\begin{align*}
& \|\partial_t \mathring{R}_{q+1} + v_{q+1}\cdot \nabla \mathring{R}_{q+1}\|_0 \leq
C \left(\mu + \delta_{q+1}^{\sfrac{1}{2}}\lambda_{q+1}\right)\\
&\qquad 
\left(\frac{\delta_{q+1}^{\sfrac{1}{2}} \mu}{\lambda_{q+1}^{1-\eps}} + \frac{\delta_{q+1} \delta_q^{\sfrac{1}{2}} \lambda_q \lambda_{q+1}^\eps}{\mu} +
\delta_{q+1}^{\sfrac{1}{2}} \delta_q^{\sfrac{1}{2}} \lambda_q \ell+\frac{\delta_{q+1}^{\sfrac12}\delta_q\lambda_q}{\lambda_{q+1}^{1-\eps}\mu\ell} +C \frac{\delta_{q+1}^{\sfrac{1}{2}}\lambda_{q+1}^{2\alpha+\varepsilon}}{\mu} \right)
\end{align*}
Since by \eqref{e:ordering_params} $\mu \leq \delta_{q+1}^{\sfrac{1}{2}}\lambda_{q+1}$, \eqref{e:Dt_R_all}
follows easily.
\end{proof}

\section{Proofs of Proposition \ref{p:iterate} and of Proposition \ref{p:tecnica}}

\subsection{Choice of the parameters $\mu$ and $\ell$}
In order to proceed, recall that the sequences $\{\delta_q\}_{q\in\N}$ and $\{\lambda_q\}_{q\in\N}$ 
are chosen to satisfy
\[
\delta_q=a^{-b^q},\quad a^{cb^{q+1}}\leq \lambda_q \leq 2 a^{cb^{q+1}}
\]
for some given constants $c>\max(\frac{5}{2},\frac{3-2\alpha}{2(1-2\alpha)})$ and $b>1$ and for $a>1$. 
Note that this has the consequence that if $a$ is chosen sufficiently large (depending only on $b>1$) then 
\begin{align}\label{e:condition_deltalambda}
\delta_q^{\sfrac12}\lambda_q^{\sfrac15}\leq 
\delta_{q+1}^{\sfrac12}\lambda_{q+1}^{\sfrac15},\quad\delta_{q+1}\leq\delta_q,\quad\textrm{ and }\quad\lambda_q\leq \lambda_{q+1}^{\frac{2}{b+1}}\,.
\end{align}
\medskip

We start by specifying the parameters $\mu=\mu_q$ and $\ell=\ell_q$: we determine them
optimizing the right hand side of
\eqref{e:allR}. More precisely, we set 
\begin{equation}\label{e:choice_mu}
\mu := \delta_{q+1}^{\sfrac{1}{4}} \delta_q^{\sfrac{1}{4}} \lambda_q^{\sfrac{1}{2}} \lambda_{q+1}^{\sfrac{1}{2}}
\end{equation}
so that the first two expressions in \eqref{e:allR} are equal, and then, having determined $\mu$, set
\begin{equation}\label{e:choice_ell}
\ell := \delta_{q+1}^{-\sfrac{1}{8}}\delta_{q}^{\sfrac{1}{8}}\lambda_{q}^{-\sfrac{1}{4}}\lambda_{q+1}^{-\sfrac{3}{4}}
\end{equation}
so that the third and fourth expressions in \eqref{e:allR} are equal (up to a factor $\lambda_{q+1}^\eps$). 
In turn, these choices lead to 
\begin{align}
\|\mathring{R}_{q+1}\|_0+\frac{1}{\lambda_{q+1}}\|\mathring{R}_{q+1}\|_1
&\leq C\delta_{q+1}^{\sfrac34}\delta_q^{\sfrac14}\lambda_q^{\sfrac12}\lambda_{q+1}^{\eps-\sfrac12}+
C\delta_{q+1}^{\sfrac38}\delta_q^{\sfrac58}\lambda_q^{\sfrac34}\lambda_{q+1}^{\eps-\sfrac34} +C \frac{\delta_{q+1}^{\sfrac{1}{2}}\lambda_{q+1}^{2\alpha+\varepsilon}}{\mu} \nonumber\\
&=C\delta_{q+1}^{\sfrac34}\delta_q^{\sfrac14}\lambda_q^{\sfrac12}\lambda_{q+1}^{\eps-\sfrac12}
\left(1+\left(\frac{\delta_q^{\sfrac12}\lambda_q^{\sfrac13}}{\delta_{q+1}^{\sfrac12}\lambda_{q+1}^{\sfrac13}}\right)^{\sfrac34}\right) +C \frac{\delta_{q+1}^{\sfrac{1}{2}}\lambda_{q+1}^{2\alpha+\varepsilon}}{\mu}\nonumber\\
&\stackrel{\eqref{e:condition_deltalambda}}{\leq} C\delta_{q+1}^{\sfrac34}\delta_q^{\sfrac14}\lambda_q^{\sfrac12}\lambda_{q+1}^{\eps-\sfrac12}+C \frac{\delta_{q+1}^{\sfrac{1}{2}}\lambda_{q+1}^{2\alpha+\varepsilon}}{\mu}\, .\label{e:Rq+1}
\end{align}
Observe also that by \eqref{e:Dt_R_all}, we have
\begin{equation}\label{e:Rq+1_adv}
\|\partial_t \mathring{R}_{q+1} + v_{q+1} \cdot \nabla \mathring{R}_{q+1}\|_0
\leq C \delta_{q+1}^{\sfrac{1}{2}} \lambda_{q+1} \left(\delta_{q+1}^{\sfrac34}\delta_q^{\sfrac14}\lambda_q^{\sfrac12}\lambda_{q+1}^{\eps-\sfrac12} +\frac{\delta_{q+1}^{\sfrac{1}{2}}\lambda_{q+1}^{2\alpha+\varepsilon}}{\mu} \right).
\end{equation}
Let us check that the conditions \eqref{e:conditions_lambdamu_2} are satisfied for some $\beta>0$ (remember that $\beta$ should be independent of $q$).
To this end we calculate
\begin{align*}
\frac{\delta_q^{\sfrac12}\lambda_q\ell}{\delta_{q+1}^{\sfrac12}}&=\left(\frac{\delta_q^{\sfrac12}\lambda_q^{\sfrac35}}{\delta_{q+1}^{\sfrac12}\lambda_{q+1}^{\sfrac35}}\right)^{\sfrac54}\,,\qquad
\frac{\delta_q^{\sfrac12}\lambda_q}{\mu}=\left(\frac{\delta_q^{\sfrac12}\lambda_q}{\delta_{q+1}^{\sfrac12}\lambda_{q+1}}\right)^{\sfrac12}\,,\\
\frac{1}{\ell\lambda_{q+1}}&=\left(\frac{{\delta_{q+1}^{\sfrac12}}\lambda_q}
{{ \delta_{q}^{\sfrac12}}\lambda_{q+1}}\right)^{\sfrac14}\,,\qquad 
\frac{\mu}{\delta_{q+1}^{\sfrac12}\lambda_{q+1}}=\left(\frac{\delta_q^{\sfrac12}\lambda_q}{\delta_{q+1}^{\sfrac12}\lambda_{q+1}}\right)^{\sfrac12}\,.
\end{align*}
Hence the conditions \eqref{e:conditions_lambdamu_2} follow from
\eqref{e:condition_deltalambda} choosing $\beta=\frac{b-1}{5b+5}$.

\subsection{Proof of Proposition \ref{p:iterate}}\label{s:hoeld_exp}
Recall that $c>\max\{\frac{5}{2},\frac{3-2\alpha}{2(1-2\alpha)}\}$ and $b>1$. We also
keep the small positive parameter $\eps>0$ whose choice will be specified later. 
The proposition is proved inductively. The initial triple is defined to be the triple $(v_0, p_0, \mathring{R}_0)$ derived in Lemma \ref{l:partenza}
(observe that, since $(c-1) b -\frac{1}{2} >1$, \eqref{e:condizioni_su_abc_2} is stronger than \eqref{e:condizioni_su_abc}). 
Given now $(v_q, p_q, \mathring{R}_q)$ satisfying the estimates \eqref{e:v_C0_iter}-\eqref{e:R_Dt_iter},
we claim that the triple $(v_{q+1}, p_{q+1}, \mathring{R}_{q+1})$ constructed above  
satisfies again all the corresponding estimates.

\smallskip

\noindent{\bf Estimates on $\mathring{R}_{q+1}$.}
Note first of all that, using the form of the estimates in \eqref{e:allR} and \eqref{e:Dt_R_all}, the estimates \eqref{e:R_C1_iter} and \eqref{e:R_Dt_iter} follow from \eqref{e:R_C0_iter}. On the other hand, in light of \eqref{e:Rq+1}, \eqref{e:R_C0_iter} follows from the recursion relations
$$
C\delta_{q+1}^{\sfrac34}\delta_q^{\sfrac14}\lambda_q^{\sfrac12}\lambda_{q+1}^{\eps-\sfrac12}\leq \frac{\eta}{2}\delta_{q+2},
$$

$$
C \frac{\delta_{q+1}^{\sfrac{1}{2}}\lambda_{q+1}^{2\alpha+\varepsilon}}{\mu}\leq \frac{\eta}{2}\delta_{q+2}.
$$
Using our choice of $\delta_q$ and $\lambda_q$ from Proposition \ref{p:iterate}, we see that the first inequality is equivalent to
$$
C\leq a^{\frac{1}{4}b^q(1+3b-2cb+(2c-4-4\eps c)b^2)},
$$
which, since $b>1$, is satisfied for all $q\geq 1$ for a sufficiently large fixed constant $a>1$, provided 
$$
\left(1+3b-2cb+(2c-4-4\eps c)b^2\right)>0.
$$
Factorizing, we obtain the inequality $(b-1)((2c-4) b-1)-4\eps cb^2>0$. It is then easy to see that for any $b>1$ and $c>5/2$ there exists
$\eps>0$ so that this inequality is satisfied. In this way we can choose $\eps>0$ (and $\beta$ above) depending solely on $b$ and $c$. Regarding the second recursion relation, it is equivalent to
\[
C\leq a^{b^q(-b^2+\frac{1}{4}b-(2\alpha+\varepsilon-\frac{1}{2})cb^2-\frac{1}{4}+\frac{1}{2}cb)},
\]
Thus, evaluating the exponent in $b=1$, we have that the last inequality holds (choosing b sufficiently near 1) for every $c>\frac{1}{1-2\alpha+\varepsilon}$ (note that $\frac{1}{1-2\alpha+\varepsilon}<\frac{3-2(\alpha+\varepsilon)}{2(1-2\alpha -\varepsilon)} $ for every $\alpha <\sfrac{1}{2}$ and $\varepsilon$ sufficiently small). We can then pick $a>1$ sufficiently large so that, by 
\eqref{e:Rq+1} and \eqref{e:Rq+1_adv}, the inequalities \eqref{e:R_C0_iter}, \eqref{e:R_C1_iter} and \eqref{e:R_Dt_iter}  hold for $\mathring{R}_{q+1}$. Note that in all these requirements, the energy profile $e$ is not playing any role.

\medskip

\noindent{\bf Estimates on $v_{q+1}-v_q$.} By \eqref{e:W_est_0}, Lemma \ref{l:ugly_lemma} and
\eqref{e:conditions_lambdamu_2} we conclude,  for $a$ sufficiently large, 
\begin{align}
\|v_{q+1} - v_q\|_0 &\leq \|w_o\|_0 + \|w_c\|_0 \leq \delta_{q+1}^{\sfrac{1}{2}} \left(\frac{M}{2} +
\lambda_{q+1}^{-\beta}\right)\, ,\\
\|v_{q+1} - v_q\|_1 &\leq \|w_o\|_1 + \|w_c\|_1 \leq \delta_{q+1}^{\sfrac{1}{2}} \lambda_{q+1} \left(\frac{M}{2} +
\lambda_{q+1}^{-\beta}\right)\, .
\end{align}
Since $\lambda_{q+1} \geq \lambda_1 \geq a^{cb^2}\geq 1$ and $M\geq 2$, we conclude
\eqref{e:v_C0_iter} and \eqref{e:v_C1_iter}.

\medskip

\noindent{\bf Estimate on the energy.} Recall Lemma \ref{l:energy}
and observe that, by \eqref{e:conditions_lambdamu_2}, 
$\frac{\delta_{q+1}^{\sfrac12}\delta_q^{\sfrac12}\lambda_q}{\lambda_{q+1}}\leq 
\frac{\delta_{q+1}^{\sfrac12}\mu}{\lambda_{q+1}}$. So the right hand side of \eqref{e:energy} is smaller than
\[
\frac{\|e'\|_0}{\mu} +C\delta_{q+1}^{\sfrac34}\delta_q^{\sfrac14}\lambda_q^{\sfrac12}\lambda_{q+1}^{-\sfrac12}+C \frac{\delta_{q}^{\alpha +\varepsilon}\lambda_{q}^{2\alpha+\varepsilon}}{\mu} .
\]
The term $C\delta_{q+1}^{\sfrac34}\delta_q^{\sfrac14}\lambda_q^{\sfrac12}\lambda_{q+1}^{-\sfrac12} $ is the same (up to a factor $ \lambda_{q+1}^\varepsilon$) of the first term in the estimate for  $ \mathring{R}_{q+1}$ Thus, the argument used above also gives $ C\delta_{q+1}^{\sfrac34}\delta_q^{\sfrac14}\lambda_q^{\sfrac12}\lambda_{q+1}^{-\sfrac12} \leq \frac{\delta_{q+2}}{16} e(t)$. Moreover it turns out that, for $b$ sufficiently near $1$
\[
C \frac{\delta_{q}^{\alpha +\varepsilon}\lambda_{q}^{2\alpha+\varepsilon}}{\mu} \leq \frac{\delta_{q+2}}{16} e(t)
\]
for any $ c> \frac{3-2(\alpha+\varepsilon)}{2(1-2\alpha -\varepsilon)}>\frac{3-2\alpha}{2(1-2\alpha)}$.
Regarding the last term $\frac{\|e'\|_0}{\mu} $ we have to require that (using the definition of $\mu$)
\begin{equation}
C \delta_{q+1}^{-\sfrac{1}{4}}\delta_q^{-\sfrac{1}{4}}\lambda_q^{-\sfrac12}\lambda_{q+1}^{-\sfrac12} \delta_{q+2}^{-1}\leq \frac{1}{\| e'\|_0}.
\end{equation}
The last inequality surely holds if we take the constant $a$ sufficiently large, more precisely
\begin{equation}
a\geq C \|e'\|_0
\end{equation}
is certainly sufficient.

\medskip

\noindent{\bf Estimates on $p_{q+1} - p_q$.}
From the definition of $p_{q+1}$ in \eqref{e:def_p_1} we deduce
\begin{align*}
\|p_{q+1}-p_q\|_0 &\leq \frac{1}{2} (\|w_o\|_0 + \|w_c\|_0)^2 + C\ell \|v_q\|_1 \|w\|_0\, .
\end{align*}
As already argued in the estimate for \eqref{e:v_C0_iter}, $\|w_o\| + \|w_c\|\leq M \delta^{\sfrac12}_q$.
Moreover $C \ell \|v_{q}\|_1\|w\|_0 \leq C M \delta_{q+1}^{\sfrac12} \delta_q^{\sfrac12} \lambda_q \ell$,
which is smaller than the right hand side of \eqref{e:allR}. Having already argued that such
quantity is smaller than $\eta \delta_{q+2}$ we can obviously bound $C \ell \|v_q\|_1 \|w\|_0$
with $\frac{M^2}{2} \delta_{q+1}$. This shows \eqref{e:p_C0_iter}. Moreover,
differentiating \eqref{e:def_p_1} we achieve the
bound
\begin{align*}
\|p_{q+1}-p_q\|_1 &\leq (\|w_o\|_1 + \|w_c\|_1) (\|w_o\|_0 + \|w_c\|_0) + C \delta_{q+1}^{\sfrac{1}{2}}
\delta_q^{\sfrac{1}{2}} \lambda_q \lambda_{q+1} \ell 
\end{align*}
and arguing as above we conclude \eqref{e:p_C1_iter}.

\smallskip

\noindent{\bf Estimates \eqref{e:t_derivatives}.} Here we can use the obvious identity 
$\partial_tw_q=D_tw_q - (v_q)_\ell\cdot\nabla w_q$ together with Lemmas \ref{l:ugly_lemma} and \ref{l:ugly_lemma_2}
to obtain $\|\partial_t v_{q+1} - \partial_t v_q\|_0 \leq C \delta_{q+1}^{\sfrac{1}{2}} \lambda_{q+1}$
Then, using \eqref{e:summabilities}, we conclude $\|\partial_t v_q\|_0 \leq C \delta_q^{\sfrac{1}{2}} \lambda_q$. 

To handle $\partial_t p_{q+1} - \partial_t p_q$ observe first that, by our construction,
\begin{align}
&\|\partial_t (p_{q+1} - p_q)\|_0 \leq (\|w_c \|_0 + \|w_o\|_0) (\|\partial_t w_c\|_0+ \|\partial_t w_o\|_0)\nonumber\\
&\qquad\qquad\qquad\qquad + 2 \|w\|_0\|\partial_t v_q\|_0 
+ \ell\|v_q\|_1\|\partial_t w\|_0\, .\nonumber
\end{align}
As above, we can derive the estimates 
$\|\partial_t w_o\|_0 +\|\partial_t w_c\|_0\leq C \delta_{q+1}^{\sfrac{1}{2}} \lambda_{q+1}$ from Lemmas \ref{l:ugly_lemma} and \ref{l:ugly_lemma_2}.
Hence
\begin{align}
\|\partial_t (p_{q+1} - p_q)\|_0 &\leq C \delta_{q+1} \lambda_{q+1} + C \delta_{q+1}^{\sfrac{1}{2}} \delta_q^{\sfrac{1}{2}} \lambda_q + C \delta_q^{\sfrac{1}{2}} \lambda_q \ell \delta_{q+1}^{\sfrac{1}{2}} \lambda_{q+1}\, .
\end{align}
Since $\ell \leq \lambda_q^{-1}$ and $\delta_q^{\sfrac{1}{2}}\lambda_q\leq \delta_{q+1}^{\sfrac{1}{2}} \lambda_{q+1}$, the desired inequality follows.
This concludes the proof.

\subsection{Proof of Proposition \ref{p:tecnica}} First, we set up the same iteration as above and for each energy profile we create a corresponding sequence $(v_{e,q}, p_{e,q}, \mathring{R}_{e,q})$ so that $(v_{e,q}, p_{e,q})$ converges uniformly to $(v_e, p_e)$. However, we choose the $\delta_q$ and $\lambda_q$ ``universally'' for all energy profiles $e\in \mathscr{E}$: it suffices to notice that we just need to replace the $\|e\|_{C^1}$ and $\|e\|_{C^2}$ in
\eqref{e:condizioni_su_abc_2} with $E_1$ and $E_2$. In particular, we fix the same $b$ and $c$ for every $e\in \mathscr{E}$ and we choose $a$ as
\begin{equation}\label{def_e}
a = \max\left\{a_0 (b,c), C_0E_1, C_0 E_2^{\frac{1}{(2c-1)b-1}}\right\}
\end{equation}
We then choose the starting triple $(v_{e,0}, p_{e,0}, \mathring{R}_{e,0})$ as in the proof of Lemma \ref{l:partenza} but where we define the parameter $\bar\lambda$ as
\begin{equation}\label{e:barlambda2}
\bar\lambda = C_0 \max \left\{ a^{\frac{b}{1-2\alpha}}, E_1 a^b, E_2 a^{- (c-1) b + 1/2}\right\}
\end{equation}
rather than by \eqref{e:barlambda}. In particular this means that, since $e (0)$ and $e' (0)$ are independent of $e\in \mathscr{E}$, the velocity $v_{e,0}$ and the Reynolds stress $\mathring{R}_{e,0}$ have the same initial value $v_{e, 0} (\cdot, 0)$ and $\mathring{R}_{e,0} (\cdot, 0)$ for every $e\in \mathscr{E}$. Following then the inductive construction of the triple $(v_{e,q}, p_{e,q}, \mathring{R}_{e,q})$, it is straightforward to conclude that each $v_{e,q+1} (\cdot, 0)$ and $\mathring{R}_{e,q+1} (\cdot, 0)$ depend only upon the $v_{q,e} (\cdot, 0)$, $\mathring{R}_{q,e} (\cdot, 0)$ and $e (0)$, hence concluding that such values are also independent of the chosen $e\in \mathscr{E}$. Passing to the limit such information we conclude that $v_e (\cdot, 0)$ is independent of $e\in \mathscr{E}$, namely each $v_e$ takes the same initial data.

\medskip

Coming to \eqref{e:chiave}, assume $\alpha < \frac{1}{5}$ and first of all observe that we can use the same argument as in the proof of Theorem \ref{t:main} to conclude that
$\|v-v_0\|_{C^{\alpha+\varepsilon}} \leq C_0$ provided $\varepsilon$ is sufficiently small. Moreover, the argument of Lemma \ref{l:partenza} gives the corresponding estimate
\[
\|v_0\|_{C^1} \leq \max \left\{ a^{\frac{b}{1-2\alpha}}, a^b E_1, E_2^{\frac{cb - 1/2}{(2c-1)b -1}}\right\}
\]
We can thus estimate
\begin{align*}
\|v\|_{C^{\alpha+\varepsilon}} &\leq C_0 + \|v_0\|_{C^{\alpha+\varepsilon}} \leq C_0 + C_0 \|v_0\|_1^{\alpha+\varepsilon}\\
& \leq C_0 \left(\max \left\{ a^{\frac{b}{1-2\alpha}}, a^b E_1, E_2^{\frac{cb - 1/2}{(2c-1)b -1}}\right\}\right)^{\alpha+\varepsilon}\, ,
\end{align*}
where $C_0$ is a geometric constant.
Now, observe first that 
\[
\lim_{(c,b) \to ({5}/{2}, 1)} \frac{cb-1/2}{(2c-1)b-1} = \frac{2}{3}\, ,
\]
Thus choosing $c-\frac{5}{2}$ and $b-1$ suitably small we can achieve
\[
\|v\|_{C^{\alpha+\varepsilon}} \leq C_0 \max \left\{a^{\frac{(\alpha+\varepsilon) b}{1-2\alpha}}, 
a^{(\alpha+\varepsilon)b} E_1^{\alpha+\varepsilon}, E_2^{\frac{2\alpha+ 4\varepsilon}{3}}\right\}\, .
\]
Note next that
\[
\lim_{\varepsilon \to 0} \frac{\alpha+\varepsilon}{1-2\alpha}=  \frac{\alpha}{1-2\alpha}
\]
whereas 
\[
\lim_{(\varepsilon,c,b) \to (0, {5}/{2}, 1)} \frac{(\alpha+\varepsilon) b}{(c-1)b-1/2} = \alpha\, 
\] 
Moreover $2\alpha > \frac{\alpha}{1-2\alpha}$, because $\alpha < \frac{1}{5}$.

Similarly,
\[
\lim_{(\varepsilon,c,b) \to (0, {5}/{2}, 1)}\frac{(\alpha+\varepsilon)b}{(1-2\alpha) ((2c-1)b -1)}
= \frac{\alpha}{3(1-2\alpha)} \leq \frac{2\alpha+4\varepsilon}{3}\, .
\]
Thus we conclude
\[
\|v\|_{C^{\alpha+\varepsilon}} \leq C_0 \max 
\left\{a^{\alpha+2 \varepsilon}E_1^{\alpha+\varepsilon}, E_2^{\frac{2\alpha+ 4\varepsilon}{3}}\right\}\, ,
\]
provided $\varepsilon$, $c-\frac{5}{2}$ and $b-1$ are sufficiently small. Inserting now the choice of $a$ in this last inequality (and again choosing the parameters $b-1, c -\frac{5}{2}$ and $\varepsilon$ appropriately small) we conclude
\begin{align*}
\|v\|_{C^{\alpha+\varepsilon}} &\leq C (\alpha, \varepsilon)  \max \left\{E_1^{2\alpha+3\varepsilon},
E_2^{\frac{\alpha+2\varepsilon}{3}}E_1^{\alpha+\varepsilon}, E_2^{\frac{2\alpha +4\varepsilon}{3}} \right\}\\
&\leq C (\alpha, \varepsilon) \max \left\{E_1^{2\alpha+3\varepsilon},  E_2^{\frac{2\alpha +4\varepsilon}{3}} \right\}\, .
\end{align*}
This shows \eqref{e:chiave} and completes the proof.

\appendix

\section{Proof of Theorem \ref{t:leray}}

We first consider the operator $\mathbf{P}_K : L^2 (\T^3) \to L^2 (\T^3)$ which truncates the Fourier series of each function $f\in L^2 (\T^3)$:
\[
\mathbf{P}_K (f) (x) = \sum_{|k|\leq K} \hat{f}_k e^{ik\cdot x}\, 
\]
and we extend it to vector functions by applying it to each component. Observe that the operator commutes with the derivatives. 

We then consider the regularized Cauchy problem:
\begin{equation}\label{NS_reg}
\left\{\begin{array}{l}
\partial_t w + \div \mathbf{P}_K (w\otimes w) + \nabla q+(-\Delta)^\alpha w =0\\   \\
\div w = 0 \\ \\
w (\cdot, 0) = \mathbf{P}_K (\overline v)\, .
\end{array}\right.
\end{equation}
The latter reduces to a system of ordinary differential equations for the Fourier coefficients of the solution
\[
w = \sum_{|k|\leq K} \hat{w}_k (t) e^{ik\cdot x}\, ,
\]
ensuring local well--posedness. On the other hand, if we scalar multiply the first equation by $w$ and use Plancherel's theorem, we easily see that
\[
\frac{d}{dt} \int_{\T^3} |w|^2 (x,t)\, dx = - 2 \int_{\T^3} |(-\Delta)^{\sfrac{\alpha}{2}} w|^2 (x,t)\, dx\, , 
\]
proving therefore that any solution stays bounded in $L^2$ in its interval $[0, T[$ of existence. This, by a standard continuation argument, proves that the system of ODEs for $\hat{w}_k (t)$ has a global solution on $\R^+$, namely that \eqref{NS_reg} is globally solvable. We let $w_K$ be such solution and observe therefore that 
\begin{align*}
\frac{1}{2} \int_{\T^3} |w_K|^2 (x,t)\, dt + \int_0^t \int_{\T^3}  |(-\Delta)^{\sfrac{\alpha}{2}} w_K| (x,s)\, dx\, ds &= \frac{1}{2} \int_{\T^3} |\mathbf{P}_K (\overline v) (x)|^2\, dx\\
&\leq \frac{1}{2} \int_{\T^3} |\overline v|^2 (x)\, dx\, .
\end{align*}
Let $K\in \mathbb N$: the sequence $\{w_K\}_{K\in \mathbb N}$ is thus bounded in $L^2 (\T^3 \times [0,T])$ for every $T$ and we can extract a subsequence, not relabeled, so that $w_K \rightharpoonup v$ in $L^2 (\T^3\times [0,T])$. With a standard diagonal argument we can then assume that such convergence takes place on every $\T^3\times [0,T]$. We now wish to show that in fact the sequence converges locally strongly, which would show that $v$ is a Leray solution.

\medskip

Since we have a uniform estimate for $w_K$ in $L^2 (\R^+, H^{\alpha} (\T^3))$ and $H^{\alpha} (\T^3)$ embeds compactly in $L^2 (\T^3)$, the proof follows a classical Aubin--Lions type argument. First of all, by Sobolev embeddings, $\|w_K\|_{L^2(\R^+,L^\beta(\T^3))}\le C$ for some $\beta >2$. Hence, by interpolation with the $L^\infty (\R^+, L^2)$ bound, we have also
$\|w_K\|_{L^\gamma (\T^3\times [0,\infty[)}\leq C$ for some exponent $\gamma>2$ 

 Let us fix $T>0$ and define
\[
A_{K,J}:=\|w_K-w_J\|_{L^2(\T^3 \times [0,T])}\,.
\]
Let $\varepsilon > 0$ be given. 
We want to show that $\exists\,N\in\N$ sucht that $A_{K,J}<\varepsilon$ for every $K,J\ge N$. Fix a standard mollifier $\varphi_\delta$ in the variable $x$ and observe that
\[
\|w_K(\cdot,t)-w_K\ast\varphi_\delta(\cdot,t)\|_{L^2}\le C \delta^{\alpha} \|w_k(\cdot,t)\|_{H^{\alpha}}\,.
\]
So, for $\delta$ sufficiently small, we have that
\[
\|w_K\ast\varphi_\delta-w_K\|_{L^2(K)}<\frac{\varepsilon}{3}\qquad\forall\,K\in\N\,.
\]
Next, observe that
\[
\|\mathbf{P}_K (w_K\otimes w_K)\|_{L^{\sfrac{\gamma}{2}}} \leq C
\]
and, using the equation
\[
- \Delta q_K = \div \div (\mathbf{P}_K (w_K\otimes w_K))
\]
for the corresponding pressure $q_K$
and Calderon--Zygmund estimates,
\[
\|q_K\|_{L^{\sfrac{\gamma}{2}}} \leq C\, .
\]
Thus, mollifying the equation for $w_K$ we find
\[
\partial_t w_K*\varphi_\delta = - \sum_{i=1}^3 f_{i,K} * \partial_{x_i} \varphi_\delta - w_K * (-\Delta)^{\alpha} \varphi_\delta\, ,
\]
where the functions $f_{i, K}$ enjoy a uniform $L^{\sfrac{\gamma}{2}}$ bound. 
Using the estimate $\|\zeta * \varphi_\delta \|_{W^{1, \infty} (\T^3)} \leq C (\delta) \|\zeta\|_{L^{\sfrac{\gamma}{2}}}$ for each time slice, we easily conclude a bound of the form
\[
\int_0^T \|\partial_t w_K * \varphi_\delta (\cdot, t)\|^{\sfrac{\gamma}{2}}_{W^{1, \infty}}\, dt \leq C (\delta)\, ,
\]
where $C (\delta)$ is a constant depending upon $\delta$ but independent of $K$. 

So we can regard $[0,T]\ni t \mapsto w_K*\varphi_\delta (\cdot, t)$ as a sequence of equicontinuous and equibounded curves taking values in $W^{1, \infty} (\T^3)$. Let $B_R$ be a (closed) ball of $W^{1,\infty} (\T^3)$ so that the images of $w_K*\varphi_\delta$ are all contained inside it. If we endow $B_R$ with the $\|\cdot\|_\infty$ norm, then we have a compact metric space $X$. Hence we can regard $[0,T]\ni t \mapsto w_K*\varphi_\delta (\cdot, t)$ as an equicontinuous and equibounded sequence in the compact metric space $X$. By the Ascoli--Arzel\`a theorem the sequence is then precompact. Since the limit is unique (namely $v * \varphi_\delta$), we can conclude that the sequence $w_K*\varphi_\delta$ converges uniformly on $\T^3 \times [0,T]$. 

Thus there exists $N$ large enough such that 
\[
\|w_K\ast\varphi_\delta-w_J\ast\varphi_\delta\|_{L^2(\T^3 \times [0,T])}<\frac{\varepsilon}{3} \qquad \mbox{for all $K,J\geq N$.}
\]
Therefore, for $J,K\geq N$ we have
\begin{align*}
\|w_K-w_J\|_{L^2 (\T^3\times [0,T])} &\leq \|w_K - w_K\ast \varphi_\delta\|_{L^2 (\T^3 \times [0,T])} + 
\|w_K\ast \varphi_\delta - w_J \ast \varphi_\delta\|_{L^2 (\T^3\times [0,T])}\\
&\quad  + \|w_J - w_J\ast \varphi_\delta\|_{L^2 (\T^3\times [0,T])}
< \varepsilon\, .
\end{align*}
This completes the proof of the strong convergence of $w_K$ and hence the proof of Theorem \ref{t:leray}. 

\section{H\"older spaces}\label{s:hoelder}

In the following $m=0,1,2,\dots$, $\alpha\in (0,1)$, and $\beta$ is a multi-index. We introduce the usual (spatial) 
H\"older norms as follows.
First of all, the supremum norm is denoted by $\|f\|_0:=\sup_{\T^3\times [0,1]}|f|$. We define the H\"older seminorms 
as
\begin{equation*}
\begin{split}
[f]_{m}&=\max_{|\beta|=m}\|D^{\beta}f\|_0\, ,\\
[f]_{m+\alpha} &= \max_{|\beta|=m}\sup_{x\neq y, t}\frac{|D^{\beta}f(x, t)-D^{\beta}f(y, t)|}{|x-y|^{\alpha}}\, ,
\end{split}
\end{equation*}
where $D^\beta$ are {\em space derivatives} only.
The H\"older norms are then given by
\begin{eqnarray*}
\|f\|_{m}&=&\sum_{j=0}^m[f]_j\\
\|f\|_{m+\alpha}&=&\|f\|_m+[f]_{m+\alpha}.
\end{eqnarray*}
Moreover, we will write $[f (t)]_\alpha$ and $\|f (t)\|_\alpha$ when the time $t$ is fixed and the
norms are computed for the restriction of $f$ to the $t$-time slice.

Recall the following elementary inequalities:
\begin{equation}\label{e:Holderinterpolation}
[f]_{s}\leq C\bigl(\varepsilon^{r-s}[f]_{r}+\varepsilon^{-s}\|f\|_0\bigr)
\end{equation}
for $r\geq s\geq 0$, $\eps>0$, and 
\begin{equation}\label{e:Holderproduct}
[fg]_{r}\leq C\bigl([f]_r\|g\|_0+\|f\|_0[g]_r\bigr)
\end{equation}
for any $1\geq r\geq 0$. From \eqref{e:Holderinterpolation} with $\eps=\|f\|_0^{\frac1r}[f]_r^{-\frac1r}$ we obtain the 
standard interpolation inequalities
\begin{equation}\label{e:Holderinterpolation2}
[f]_{s}\leq C\|f\|_0^{1-\frac{s}{r}}[f]_{r}^{\frac{s}{r}}.
\end{equation}

Next we collect two classical estimates on the H\"older norms of compositions. These are also standard, for instance
in applications of the Nash-Moser iteration technique.

\begin{proposition}\label{p:chain}
Let $\Psi: \Omega \to \mathbb R$ and $u: \R^n \to \Omega$ be two smooth functions, with $\Omega\subset \R^N$. 
Then, for every $m\in \mathbb N \setminus \{0\}$ there is a constant $C$ (depending only on $m$,
$N$, $n$) such that
\begin{align}
\left[\Psi\circ u\right]_m &\leq C ([\Psi]_1 [u]_{m}+\|D\Psi\|_{m-1} \|u\|_0^{m-1} [u]_{m})\label{e:chain0}\\
\left[\Psi\circ u\right]_m &\leq C ([\Psi]_1 [u]_{m}+\|D\Psi\|_{m-1} [u]_1^{m} )\, .
\label{e:chain1}
\end{align} 
\end{proposition}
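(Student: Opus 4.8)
The plan is to deduce both inequalities from the Fa\`a di Bruno formula for iterated derivatives of a composition, together with the interpolation estimate \eqref{e:Holderinterpolation2} and Young's inequality. The case $m=1$ is the ordinary chain rule $\partial_i(\Psi\circ u)=\sum_\ell(\partial_\ell\Psi)(u)\,\partial_i u_\ell$, which gives $[\Psi\circ u]_1\le C[\Psi]_1[u]_1$, and both right-hand sides of \eqref{e:chain0} and \eqref{e:chain1} reduce to exactly this; so from now on assume $m\ge 2$.

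For a multi-index $\beta$ with $|\beta|=m$, the Fa\`a di Bruno formula (for a vector-valued inner map $u$) expands $D^\beta(\Psi\circ u)$ as a finite linear combination, with coefficients and number of terms depending only on $m,N,n$, of monomials of the form $(\partial^\gamma\Psi)(u)\,\prod_{j=1}^k D^{\beta^{(j)}}u_{\ell_j}$, where $1\le k\le m$, $|\gamma|=k$, each $\beta^{(j)}\neq 0$, and $\beta^{(1)}+\dots+\beta^{(k)}=\beta$, so that in particular $\sum_j|\beta^{(j)}|=m$. Passing to sup norms, each such monomial is $\le C[\Psi]_k\prod_{j=1}^k[u]_{|\beta^{(j)}|}$. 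I would then interpolate all the factors by \eqref{e:Holderinterpolation2}: applied to $D\Psi$ it yields $[\Psi]_k=[D\Psi]_{k-1}\le C[\Psi]_1^{\frac{m-k}{m-1}}\|D\Psi\|_{m-1}^{\frac{k-1}{m-1}}$; applied to $u$ it yields $[u]_s\le C\|u\|_0^{1-s/m}[u]_m^{s/m}$; and applied to $Du$ it yields $[u]_s=[Du]_{s-1}\le C[u]_1^{\frac{m-s}{m-1}}[u]_m^{\frac{s-1}{m-1}}$ for $1\le s\le m$.

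Combining the factor bounds and using $\sum_j|\beta^{(j)}|=m$ collapses each monomial into a weighted geometric mean. For \eqref{e:chain0} one uses the first interpolation of $u$, giving $\prod_j[u]_{|\beta^{(j)}|}\le C\|u\|_0^{k-1}[u]_m$, hence the monomial is
\[
\le C\big([\Psi]_1[u]_m\big)^{\frac{m-k}{m-1}}\big(\|D\Psi\|_{m-1}\,\|u\|_0^{m-1}[u]_m\big)^{\frac{k-1}{m-1}};
\]
for \eqref{e:chain1} one uses the second, giving $\prod_j[u]_{|\beta^{(j)}|}\le C[u]_1^{\frac{m(k-1)}{m-1}}[u]_m^{\frac{m-k}{m-1}}$, hence the monomial is
\[
\le C\big([\Psi]_1[u]_m\big)^{\frac{m-k}{m-1}}\big(\|D\Psi\|_{m-1}\,[u]_1^{m}\big)^{\frac{k-1}{m-1}}.
\]
In both cases the two exponents are nonnegative and sum to $1$, so Young's (weighted AM--GM) inequality bounds each monomial by $C$ times the sum of the two base quantities; summing over the finitely many monomials and maximizing over $\beta$ with $|\beta|=m$ gives \eqref{e:chain0} and \eqref{e:chain1} respectively. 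The endpoints need no special care: $k=1$ produces exactly $C[\Psi]_1[u]_m$, while $k=m$ produces $C[\Psi]_m[u]_1^m\le C\|D\Psi\|_{m-1}[u]_1^m$, which is also $\le C\|D\Psi\|_{m-1}\|u\|_0^{m-1}[u]_m$ by \eqref{e:Holderinterpolation2}.

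I expect the only genuinely delicate point to be combinatorial rather than analytic: one must set up Fa\`a di Bruno carefully enough for a vector-valued $u$ to be sure that every monomial carries total derivative order exactly $m$, split into exactly $k=|\gamma|$ nonempty blocks, since this is precisely what makes the exponent arithmetic in the geometric-mean step exact. Once that bookkeeping is fixed, the interpolation-plus-Young argument is entirely routine, and I would devote the bulk of the write-up to the combinatorial identity and treat the estimates as above.
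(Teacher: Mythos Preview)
Your argument is correct and is precisely the standard Nash--Moser type proof of these composition estimates. The paper does not actually supply a proof of this proposition: it merely states it and remarks that ``These are also standard, for instance in applications of the Nash--Moser iteration technique.'' So there is nothing to compare against; your Fa\`a di Bruno expansion followed by interpolation \eqref{e:Holderinterpolation2} on each factor and the weighted AM--GM to collapse the geometric means is exactly the textbook route alluded to.

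One small caveat worth flagging in a final write-up: you are applying the interpolation inequality \eqref{e:Holderinterpolation2} to $D\Psi$ on the domain $\Omega\subset\R^N$, not on the torus. For this to be valid one needs $\Omega$ to support such interpolation (e.g.\ convex, or a smooth bounded domain with the extension property); in the applications of the paper the relevant $\Omega$ is always a ball $B_{r_0}(\Id)$, so this is harmless, but it is the one place where the argument silently uses a hypothesis on $\Omega$ beyond what is stated.
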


\section{Estimates on the fractional laplacian}

For a proof of the following theorem we refer to \cite[Theorem 1.4]{Roncal}

 \begin{theorem}{(Interaction with Holder spaces)}\label{lapla.holder}
Let $ 0< \alpha < \dfrac{1}{2}$. If $f \in C^{0,2 \alpha + \varepsilon} $, for some $\varepsilon >0$ such that $ 0<2 \alpha + \varepsilon \leq 1$, then $ (-\Delta)^\alpha f$ is a continous function and

\[ 
 \| (-\Delta)^\alpha f\|_0 \leq C(\varepsilon) [f]_{2 \alpha + \varepsilon}.
\]

 \end{theorem}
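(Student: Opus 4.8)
The plan is to represent $(-\Delta)^\alpha$ on $\T^3$ as a singular integral operator and then split the integral into a near-field and a far-field piece. First I would reduce to the case $\fint_{\T^3} f = 0$: since $(-\Delta)^\alpha$ annihilates constants, replacing $f$ by $f - \fint_{\T^3} f$ changes neither $(-\Delta)^\alpha f$ nor $[f]_{2\alpha+\varepsilon}$, and when $\fint_{\T^3} f = 0$ the (continuous) function $f$ vanishes at some point, whence $\|f\|_0 \leq C [f]_{2\alpha+\varepsilon}$ with $C$ depending only on $\diam \T^3$. This is the reason why only the seminorm appears on the right-hand side of the claimed bound.

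Next I would establish the representation
\begin{equation*}
(-\Delta)^\alpha f(x) = \int_{\T^3} \bigl(f(x) - f(x-y)\bigr)\, G(y)\, dy\, ,
\end{equation*}
where $G$ is the periodization of the Riesz kernel, i.e. $G(y) = c_{3,\alpha}\sum_{n\in\Z^3} |y + 2\pi n|^{-3-2\alpha}$; thus $G$ is smooth on $\T^3 \setminus \{0\}$ and $G(y) = c_{3,\alpha}|y|^{-3-2\alpha} + (\textrm{smooth})$ near the origin. The identity is checked on the Fourier modes $e^{ik\cdot x}$: unwinding the periodization reduces the computation to the classical symbol identity $c_{3,\alpha}\int_{\R^3}(1 - e^{ik\cdot z})|z|^{-3-2\alpha}\, dz = |k|^{2\alpha}$. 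A point worth stressing is that, because $\alpha < \tfrac12$, the integrand $(1-e^{ik\cdot z})|z|^{-3-2\alpha} = O(|z|^{-2-2\alpha})$ is absolutely integrable near $z=0$, so no principal value is needed; more generally, for $f \in C^{0,2\alpha+\varepsilon}$ one has $|f(x)-f(x-y)| \leq [f]_{2\alpha+\varepsilon}|y|^{2\alpha+\varepsilon}$, which makes the integrand in the display above $\leq C[f]_{2\alpha+\varepsilon}|y|^{\varepsilon - 3}$ near $y=0$, integrable on $\R^3$ since $\varepsilon > 0$. This is exactly where the hypothesis that $2\alpha+\varepsilon$ be only slightly above $2\alpha$, rather than above $1$, is enough.

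Given the representation, the estimate is routine. Fix $r_0 > 0$ small enough that $G(y) \leq C |y|^{-3-2\alpha}$ on $\{|y| \leq r_0\}$. On this near-field region the Hölder bound gives
\begin{equation*}
\Bigl|\int_{|y|\leq r_0} \bigl(f(x)-f(x-y)\bigr) G(y)\, dy\Bigr| \leq C [f]_{2\alpha+\varepsilon} \int_0^{r_0} r^{\varepsilon - 1}\, dr = \frac{C r_0^\varepsilon}{\varepsilon}\, [f]_{2\alpha+\varepsilon}\, ,
\end{equation*}
while on the far-field region $\{r_0 < |y|\}$ (within a fundamental domain) $G$ is bounded and $|f(x)-f(x-y)| \leq 2\|f\|_0 \leq C[f]_{2\alpha+\varepsilon}$ by the first step, so that contribution is also $\leq C[f]_{2\alpha+\varepsilon}$. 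Summing the two pieces proves $\|(-\Delta)^\alpha f\|_0 \leq C(\varepsilon)[f]_{2\alpha+\varepsilon}$. Finally, continuity of $(-\Delta)^\alpha f$ follows from dominated convergence applied directly to the representation: for $x_n \to x$, the integrand $(f(x_n)-f(x_n-y))G(y)$ tends to $(f(x)-f(x-y))G(y)$ pointwise in $y$ by continuity of $f$, and it is bounded, uniformly in $n$, by the integrable function $C[f]_{2\alpha+\varepsilon}\bigl(|y|^{\varepsilon-3}\mathbf 1_{\{|y|\leq r_0\}} + \mathbf 1_{\{|y|>r_0\}}\bigr)$.

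The main obstacle in this scheme is the first part of the second paragraph: identifying the periodic kernel $G$, verifying that it inherits the $|y|^{-3-2\alpha}$ singularity of the Riesz kernel at the origin and is otherwise smooth, and reducing the symbol identity on $\T^3$ to the known one on $\R^3$. Once this representation is in hand, the size bound and the continuity are each a one-paragraph argument. Alternatively, one may simply invoke the explicit pointwise formula for $(-\Delta)^\alpha$ on the torus and pass directly to the splitting argument, which is in effect what is done here via \cite{Roncal}.
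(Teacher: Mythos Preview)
Your argument is correct; the paper itself does not prove this statement but simply refers to \cite[Theorem~1.4]{Roncal}, and what you outline---the periodized Riesz-kernel representation of $(-\Delta)^\alpha$ on $\T^3$, the near-field/far-field split with the near-field controlled by the $C^{2\alpha+\varepsilon}$ modulus and the far-field by $\|f\|_0\leq C[f]_{2\alpha+\varepsilon}$, and dominated convergence for continuity---is exactly the standard proof behind that citation. All the steps you sketch (mean-zero reduction, integrability of $|y|^{\varepsilon-3}$ near the origin since $\varepsilon>0$, smoothness of the remainder of the periodization) are sound.
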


\begin{corollary}\label{vlaplv}
Let $\alpha \in ]0,1[$, $\varepsilon >0$ be such that $0<\alpha+\varepsilon\leq 1$, and let $f:\mathbb{T}^3 \rightarrow \mathbb{R}^3$ as before. There exist a constant $C=C(\varepsilon)>0$ such that
\begin{equation}
\int_{\mathbb{T}^3} |(-\Delta)^{\sfrac{\alpha}{2}} f|^2 (x) dx \leq C [f]_{\alpha+\varepsilon}^2
\qquad \forall f \in C^{\alpha+\varepsilon} (\T^3)\, .
\end{equation}
\end{corollary}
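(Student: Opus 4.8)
The statement reduces immediately to the scalar case $f : \T^3 \to \R$ applied componentwise, so I would prove the scalar inequality $\int_{\T^3} |(-\Delta)^{\sfrac{\alpha}{2}} f|^2\,dx \leq C(\varepsilon)[f]_{\alpha+\varepsilon}^2$ and then sum over the three components (which only changes the constant by a factor $3$). The plan is to compute the left-hand side on the Fourier side via Plancherel's theorem: writing $f = \sum_{k\in\Z^3} \hat f_k e^{ik\cdot x}$, one has $(-\Delta)^{\sfrac{\alpha}{2}} f = \sum_k |k|^\alpha \hat f_k e^{ik\cdot x}$, hence
\[
\int_{\T^3} |(-\Delta)^{\sfrac{\alpha}{2}} f|^2\,dx = (2\pi)^3 \sum_{k\in\Z^3} |k|^{2\alpha} |\hat f_k|^2\, .
\]
So the task becomes: bound $\sum_k |k|^{2\alpha}|\hat f_k|^2$ by $C[f]_{\alpha+\varepsilon}^2$.

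The key step is to control the Fourier coefficients of a H\"older function. The standard bound is $|\hat f_k| \leq C |k|^{-(\alpha+\varepsilon)} [f]_{\alpha+\varepsilon}$ for $k\neq 0$, obtained in the usual way: since $\hat f_k = (2\pi)^{-3}\int f(x) e^{-ik\cdot x}\,dx$, a change of variables $x \mapsto x + \pi k/|k|^2$ (so that $e^{-ik\cdot x}$ picks up a factor $-1$) gives
\[
\hat f_k = \frac{1}{2(2\pi)^3}\int_{\T^3}\Bigl(f(x) - f\bigl(x + \tfrac{\pi k}{|k|^2}\bigr)\Bigr) e^{-ik\cdot x}\,dx\, ,
\]
and the H\"older seminorm bounds the integrand by $[f]_{\alpha+\varepsilon}(\pi/|k|)^{\alpha+\varepsilon}$. (Here I only use the spatial H\"older seminorm, and the time variable plays no role — if a time slice is fixed, apply the estimate slice by slice and note $\sup_t$ of the right-hand side is still $[f]_{\alpha+\varepsilon}^2$.) Plugging this in,
\[
\sum_{k\neq 0} |k|^{2\alpha}|\hat f_k|^2 \leq C [f]_{\alpha+\varepsilon}^2 \sum_{k\neq 0} |k|^{2\alpha - 2(\alpha+\varepsilon)} = C[f]_{\alpha+\varepsilon}^2 \sum_{k\neq 0} |k|^{-2\varepsilon}\, ,
\]
and the zero mode contributes nothing since $|k|^{2\alpha} = 0$ at $k=0$.

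The one genuine obstacle is that the series $\sum_{k\in\Z^3\setminus\{0\}} |k|^{-2\varepsilon}$ diverges for small $\varepsilon$ (it converges only when $2\varepsilon > 3$), so the crude Fourier bound alone is not enough and the constant would be infinite. The fix is to exploit the extra decay more carefully, or better, to avoid counting each frequency individually: instead I would use a Littlewood--Paley / dyadic decomposition. Split $\Z^3\setminus\{0\} = \bigcup_{j\geq 0} A_j$ with $A_j = \{k : 2^j \leq |k| < 2^{j+1}\}$; on $A_j$ one has $|k|^{2\alpha} \leq 2^{2\alpha(j+1)}$, and the Bessel-type inequality $\sum_{k\in A_j}|\hat f_k|^2 \leq C 2^{-2(\alpha+\varepsilon)j}[f]_{\alpha+\varepsilon}^2$ follows from the pointwise bound above together with $\#A_j \leq C 2^{3j}$ — wait, that reintroduces the growth. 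The clean way is: the H\"older bound gives $\sup_{k\in A_j}|\hat f_k| \le C 2^{-(\alpha+\varepsilon)j}[f]_{\alpha+\varepsilon}$, but the \emph{right} estimate is that the $j$-th Littlewood--Paley block $\Delta_j f$ satisfies $\|\Delta_j f\|_0 \le C 2^{-(\alpha+\varepsilon)j}[f]_{\alpha+\varepsilon}$ (the Besov characterization $C^{\alpha+\varepsilon} = B^{\alpha+\varepsilon}_{\infty,\infty}$), and then
\[
\sum_{k\in A_j}|k|^{2\alpha}|\hat f_k|^2 \le C 2^{2\alpha j}\|\Delta_j f\|_{L^2}^2 \le C 2^{2\alpha j}\|\Delta_j f\|_0^2 \le C 2^{2\alpha j} 2^{-2(\alpha+\varepsilon)j}[f]_{\alpha+\varepsilon}^2 = C 2^{-2\varepsilon j}[f]_{\alpha+\varepsilon}^2\, ,
\]
using $\|\Delta_j f\|_{L^2(\T^3)}\le C\|\Delta_j f\|_{L^\infty(\T^3)}$ on the torus. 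Summing the geometric series $\sum_{j\geq 0} 2^{-2\varepsilon j} = (1-2^{-2\varepsilon})^{-1} < \infty$ yields the claim with $C(\varepsilon) = C(1-2^{-2\varepsilon})^{-1}$, which blows up as $\varepsilon \to 0$ as expected. Alternatively, and perhaps more in the spirit of the paper's minimalism, one can simply cite Theorem \ref{lapla.holder}: apply it with exponent $\alpha/2$ in place of $\alpha$ to get $\|(-\Delta)^{\sfrac{\alpha}{2}} f\|_0 \leq C(\varepsilon)[f]_{\alpha+\varepsilon}$, then bound $\int_{\T^3}|(-\Delta)^{\sfrac{\alpha}{2}} f|^2 \leq (2\pi)^3 \|(-\Delta)^{\sfrac{\alpha}{2}} f\|_0^2 \leq C(\varepsilon)[f]_{\alpha+\varepsilon}^2$ — this is the shortest route and I would present it as the main argument, relegating the Fourier computation to a remark.
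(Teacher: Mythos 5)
Your final ``shortest route'' is precisely the paper's intended (and unwritten) proof: the corollary is stated immediately after Theorem~\ref{lapla.holder} and follows by applying that theorem with $\alpha/2$ in place of $\alpha$ (legitimate since $\alpha/2 \in ]0,\tfrac12[$ and $0 < 2\cdot\tfrac{\alpha}{2}+\varepsilon = \alpha+\varepsilon \leq 1$), then using $\int_{\T^3}|g|^2 \leq (2\pi)^3\|g\|_0^2$ and summing componentwise. The Littlewood--Paley/Plancherel detour you sketch is also correct and essentially reproves the relevant piece of Theorem~\ref{lapla.holder}, but it is not needed; your instinct to cite the theorem and present the Fourier computation only as a remark is the right call.
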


\section{Estimates for transport equations}\label{s:transport_equation}

In this section we recall some well known results regarding smooth solutions of
the \emph{transport equation}:
\begin{equation}\label{e:transport}
\left\{\begin{array}{l}
\partial_t f + v\cdot  \nabla f =g,\\ 
f|_{t_0}=f_0,
\end{array}\right.
\end{equation}
where $v=v(t,x)$ is a given smooth vector field. 
We denote the advective derivative $\partial_t+v\cdot \nabla$ by $D_t$. We will consider solutions
on the entire space $\R^3$ and treat solutions on the torus simply as periodic solution in $\R^3$. 

\begin{proposition}\label{p:transport_derivatives}
Assume $t>t_0$. Any solution $f$ of \eqref{e:transport} satisfies
\begin{align}
\|f (t)\|_0 &\leq \|f_0\|_0 + \int_{t_0}^t \|g (\tau)\|_0\, d\tau\,,\label{e:max_prin}\\
[f(t)]_1 &\leq [f_0]_1e^{(t-t_0)[v]_1} + \int_{t_0}^t e^{(t-\tau)[v]_1} [g (\tau)]_1\, d\tau\,,\label{e:trans_est_0}
\end{align}
and, more generally, for any $N\geq 2$ there exists a constant $C=C_N$ so that
\begin{align}
[f (t)]_N & \leq \Bigl([ f_0]_N + C(t-t_0)[v]_N[f_0]_1\Bigr)e^{C(t-t_0)[v]_1}+\nonumber\\
&\qquad +\int_{t_0}^t e^{C(t-\tau)[v]_1}\Bigl([g (\tau)]_N + C (t-\tau ) [v ]_N [g (\tau)]_{1}\Bigr)\,d\tau.
\label{e:trans_est_1}
\end{align}
Define $\Phi (t, \cdot)$ to be the inverse of the flux $X$ of $v$ starting at time $t_0$ as the identity
(i.e. $\frac{d}{dt} X = v (X,t)$ and $X (x, t_0 )=x$). Under the same assumptions as above:
\begin{align}
\norm{D\Phi (t) -\Id}_0&\leq e^{(t-t_0)[v]_1}-1\,,  \label{e:Dphi_near_id}\\
[\Phi (t)]_N &\leq C(t-t_0)[v]_Ne^{C(t-t_0)[v]_1} \qquad \forall N \geq 2.\label{e:Dphi_N}
\end{align}
\end{proposition}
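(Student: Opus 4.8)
The plan is to argue by the method of characteristics and to run a Gr\"onwall argument at each order of differentiation, deducing the flow estimates \eqref{e:Dphi_near_id}--\eqref{e:Dphi_N} from the estimates on $f$ by applying them to $\Phi-\mathrm{Id}$.

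\textbf{Characteristics and the maximum principle.} Let $X(x,t)$ be the flux of $v$, so that $\frac{d}{dt}X(x,t)=v(X(x,t),t)$ and $X(x,t_0)=x$; then $x\mapsto X(x,t)$ is a diffeomorphism with inverse $\Phi(\cdot,t)$. Along characteristics \eqref{e:transport} becomes $\frac{d}{dt}\bigl(f(X(x,t),t)\bigr)=g(X(x,t),t)$, hence
\[
f(X(x,t),t)=f_0(x)+\int_{t_0}^t g(X(x,\tau),\tau)\,d\tau ,
\]
and taking the supremum over $x$ yields \eqref{e:max_prin}.

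\textbf{The first-order estimate.} Differentiating \eqref{e:transport} in $x_i$ gives $D_t(\partial_i f)=\partial_i g-\sum_j(\partial_i v^j)(\partial_j f)$, so along $X(x,\cdot)$
\[
\Bigl|\tfrac{d}{dt}\nabla f(X(x,t),t)\Bigr|\le [v]_1\,\bigl|\nabla f(X(x,t),t)\bigr|+[g(t)]_1 .
\]
Gr\"onwall's inequality followed by the supremum over $x$ gives \eqref{e:trans_est_0}.

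\textbf{Higher-order estimates.} For a multi-index $\beta$ with $|\beta|=N\ge 2$, applying $D^\beta$ to \eqref{e:transport} yields $D_t(D^\beta f)=D^\beta g-[D^\beta,v\cdot\nabla]f$, and the Leibniz rule writes $[D^\beta,v\cdot\nabla]f$ as a sum of terms $D^\gamma v\cdot\nabla D^{\beta-\gamma}f$, $0<\gamma\le\beta$: those with $|\gamma|=1$ are bounded by $[v]_1[f]_N$, while for $2\le|\gamma|\le N$ the interpolation inequality \eqref{e:Holderinterpolation2} together with Young's inequality give $\|D^\gamma v\cdot\nabla D^{\beta-\gamma}f\|_0\le C\bigl([v]_1[f]_N+[v]_N[f]_1\bigr)$. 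Thus along characteristics $|\tfrac{d}{dt}D^\beta f(X(x,t),t)|\le C[v]_1[f(t)]_N+C[v]_N[f(t)]_1+\|D^\beta g(t)\|_0$; integrating, taking the maximum over $|\beta|=N$, and applying Gr\"onwall gives $[f(t)]_N\le [f_0]_N e^{C(t-t_0)[v]_1}+\int_{t_0}^t e^{C(t-\tau)[v]_1}\bigl(C[v]_N[f(\tau)]_1+[g(\tau)]_N\bigr)\,d\tau$. Inserting \eqref{e:trans_est_0} for $[f(\tau)]_1$ and using Fubini's theorem to reorganize the resulting iterated integral produces exactly the mixed terms $C(t-t_0)[v]_N[f_0]_1$ and $C(t-\tau)[v]_N[g(\tau)]_1$ appearing in \eqref{e:trans_est_1}.

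\textbf{The flow.} Since $\Phi$ is the inverse flow of $v$ it solves $\partial_t\Phi+v\cdot\nabla\Phi=0$ with $\Phi|_{t_0}=x$, so each component of $\psi:=\Phi-x$ solves \eqref{e:transport} with forcing $g=-v$ and zero initial datum. Applying \eqref{e:trans_est_0} componentwise,
\[
\|D\Phi(t)-\Id\|_0=[\psi(t)]_1\le [v]_1\int_{t_0}^t e^{(t-\tau)[v]_1}\,d\tau=e^{(t-t_0)[v]_1}-1 ,
\]
which is \eqref{e:Dphi_near_id}; and applying \eqref{e:trans_est_1} with $f_0=0$, $g=-v$ bounds $[\psi(t)]_N$ by $\int_{t_0}^t e^{C(t-\tau)[v]_1}\bigl([v]_N+C(t-\tau)[v]_N[v]_1\bigr)\,d\tau\le C(t-t_0)[v]_N e^{C(t-t_0)[v]_1}$, which is \eqref{e:Dphi_N} since $[\Phi]_N=[\psi]_N$ for $N\ge 2$.

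\textbf{Main obstacle.} The only step that is not routine is the commutator estimate in the higher-order case: one must split the Leibniz expansion of $[D^\beta,v\cdot\nabla]$ so that precisely one contribution feeds the Gr\"onwall exponent $e^{C(t-t_0)[v]_1}$ while every other contribution, after interpolation and Young's inequality, collapses onto the scale $[v]_N[f]_1$; and then to keep the $(t-\tau)$ weights straight through the iterated Gr\"onwall/Fubini step that couples the $N$-th order estimate to the first-order one.
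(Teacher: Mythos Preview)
Your proof is correct and follows the standard route (characteristics for the maximum principle, differentiate and Gr\"onwall for $N=1$, then Leibniz $+$ interpolation $+$ iterated Gr\"onwall for $N\ge 2$, and finally apply everything to $\Phi-\mathrm{Id}$); this is exactly the argument the paper invokes by citing \cite{BDIS} rather than spelling it out.
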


The proof can be found in \cite{BDIS}.

\section{Constantin-E-Titi commutator estimate}\label{s:CET}

We recall here the quadratic commutator estimate from \cite{CET} 
(cf. also with \cite[Lemma 1]{CDSz}):

\begin{proposition}\label{p:CET}
Let $f,g\in C^{\infty}(\T^3\times\T)$ and $\psi$ be the mollifier of Section \ref{s:mollifier}. For any $r\geq 0$ we have the estimate
\[
\Bigl\|(f*\psi_\ell)( g*\psi_\ell)-(fg)*\psi_\ell\Bigr\|_r\leq C\ell^{2-r}\|f\|_1\|g\|_1\, ,
\]
where the constant $C$ depends only on $r$.
\end{proposition}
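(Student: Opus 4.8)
The plan is to derive everything from the classical Constantin--E--Titi algebraic identity. Writing $h_\ell:=h*\psi_\ell$ and, for a spatial shift $y$, $\delta_yh(x):=h(x-y)-h(x)$, one checks by expanding the product as a double convolution integral and using $\int\psi_\ell=1$ that (abbreviating the left-hand side by $Q_\ell(f,g)$)
\[
(f*\psi_\ell)(g*\psi_\ell)-(fg)*\psi_\ell=(f_\ell-f)(g_\ell-g)-\int_{\R^3}\psi_\ell(y)\,\delta_yf(x)\,\delta_yg(x)\,dy\,.
\]
The structural point is that \emph{both} summands on the right are quadratic in increments of $f$ and of $g$, and since $\psi_\ell$ is supported in $\{|y|\le C\ell\}$ every such increment is of size $O(\ell)$ in $C^0$; this is what produces the gain of two powers of $\ell$.

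First I would treat the case $r=0$. On the support of $\psi_\ell$ one has $|\delta_yf(x)|\le|y|\,\|f\|_1\le C\ell\|f\|_1$, and likewise $\|f_\ell-f\|_0\le\int\psi_\ell(y)|y|\,dy\,\|f\|_1\le C\ell\|f\|_1$, and the same with $g$. Multiplying the two increments in each summand of the identity gives at once $\|Q_\ell(f,g)\|_0\le C\ell^2\|f\|_1\|g\|_1$. For $0<r<1$ the identical computation, now estimating the Hölder seminorm of each product via \eqref{e:Holderproduct} together with $[\delta_yf]_r\le C|y|^{1-r}\|f\|_1$ (a consequence of \eqref{e:Holderinterpolation2}), yields $[Q_\ell(f,g)]_r\le C\ell^{2-r}\|f\|_1\|g\|_1$.

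For integer $r=N\ge1$ I would differentiate the identity and apply Leibniz's rule to the two summands. The delicate point is that some of the individual terms produced are \emph{not} of the claimed order, since they contain an undifferentiated increment multiplied by a top-order derivative such as $D^\beta f$, which is controlled only by $[f]_N$ and not by $\|f\|_1$; the resolution, which is the heart of the Constantin--E--Titi computation, is that these dangerous terms cancel in pairs between the two summands, so that in every surviving term the spatial derivatives not absorbed by a remaining increment fall instead on a mollified factor. For such a factor one has the elementary bound $\|D^\gamma(h_\ell)\|_0\le C\ell^{1-|\gamma|}\|h\|_1$ for $|\gamma|\ge1$ (move one derivative onto $h$ and the remaining $|\gamma|-1$ onto $\psi_\ell$, each of the latter costing $\ell^{-1}$ since $\|D^\gamma\psi_\ell\|_{L^1}\le C\ell^{-|\gamma|}$). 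Counting powers, each surviving term carries one increment ($O(\ell)$ in $C^0$) and at most $N-1$ further derivatives costing $\ell^{-1}$ apiece, for a net $\ell^{2-N}$; this gives $\|Q_\ell(f,g)\|_N\le C\ell^{2-N}\|f\|_1\|g\|_1$. Non-integer $r$ is then recovered by interpolating with \eqref{e:Holderinterpolation2} between the estimates at $\lfloor r\rfloor$ and $\lfloor r\rfloor+1$.

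The only genuinely non-routine step is the bookkeeping just described for $r\ge2$: organizing the Leibniz expansion of the identity so that the cancellation of the top-order terms between its two summands is manifest. This is precisely the computation of \cite{CET} (and, in exactly the form needed here, \cite[Lemma~1]{CDSz}), to which I would refer for the combinatorial details; all the remaining estimates are the elementary ones recalled above.
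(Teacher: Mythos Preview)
Your proposal is correct and in fact goes well beyond the paper, which gives no proof of this proposition at all: it simply recalls the statement and cites \cite{CET} and \cite[Lemma~1]{CDSz}. Since you ultimately defer to exactly the same two references for the bookkeeping in the range $r\ge 2$, your treatment is consistent with (and more informative than) the paper's.

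One small remark on your sketch for $r\ge 2$: the cancellation you describe between the two summands of the CET identity removes the terms of the form $D^\beta f\cdot(g_\ell-g)$, but what survives from the integral summand is $\int\psi_\ell(y)\,D^\beta f(x-y)\,\delta_yg(x)\,dy$, which still carries an uncontrolled $D^\beta f$; this needs one further integration by parts in $y$ to push the extra derivatives onto $\psi_\ell$ and $\delta_yg$. A slicker route (which is the one in \cite[Lemma~1]{CDSz}) avoids any cancellation: rewrite
\[
Q_\ell(x)=-\tfrac12\iint\psi_\ell(y)\psi_\ell(z)\,[f(x-y)-f(x-z)]\,[g(x-y)-g(x-z)]\,dy\,dz,
\]
observe that the $x$-dependence enters only through translations, and convert each $\partial_{x_i}$ into a $-\partial_{z_i}$ which is then integrated by parts onto the mollifiers. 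Each derivative then costs exactly one factor of $\ell^{-1}$, while the two brackets supply $\ell^2$, giving $\ell^{2-r}$ directly.
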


\section{Schauder Estimates}
\label{s:schauder}

We recall the following consequences of the classical Schauder estimates  (cf.\ 
\cite[Proposition 5.1]{DS-Inv}). 

\begin{proposition}\label{p:GT}
For any $\alpha\in (0,1)$ and any $m\in \N$ there exists a constant $C (\alpha, m)$ with the following properties.
If $\phi, \psi: \T^3\to \R$ are the unique solutions of
\[
\left\{\begin{array}{l}
\Delta \phi = f\\ \\
\fint \phi =0
\end{array}\right.
\qquad\qquad 
\left\{\begin{array}{l}
\Delta \psi = {\rm div}\, F\\ \\
\fint \psi =0
\end{array}\right. \, ,
\]
then
\begin{equation}\label{e:GT_laplace}
\|\phi\|_{m+2+\alpha} \leq C (m, \alpha) \|f\|_{m, \alpha}
\quad\mbox{and}\quad \|\psi\|_{m+1+\alpha} \leq C (m, \alpha) \|F\|_{m, \alpha}\, .
\end{equation}
Moreover we have the estimates
\begin{eqnarray}
&&\|\mathcal{R} v\|_{m+1+\alpha} \leq C (m,\alpha) \|v\|_{m+\alpha}\label{e:Schauder_R}\\
&&\|\mathcal{R} ({\rm div}\, A)\|_{m+\alpha}\leq C(m,\alpha) \|A\|_{m+\alpha}\label{e:Schauder_Rdiv}
\end{eqnarray}
\end{proposition}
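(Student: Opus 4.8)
This is entirely classical: it is the standard Schauder estimate for the Poisson equation on $\T^3$, pushed through the explicit formula defining $\RR$ together with the H\"older boundedness of the Leray projection $\P$; indeed it is precisely \cite[Proposition 5.1]{DS-Inv}, which I would simply cite. The plan, should one want to spell it out, is as follows.

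First I would establish \eqref{e:GT_laplace}. It suffices to treat $m=0$, since constant-coefficient spatial derivatives commute with $\Delta$ on $\T^3$ and preserve the zero-average normalization, so differentiating the equations reduces the general case to $m=0$. For $\Delta\phi=f$ with $\fint\phi=0$ I would invoke the classical interior Schauder estimate (e.g. Gilbarg--Trudinger, Chapter 4) on a finite cover of $\T^3$ by balls, using periodicity, to get $[D^2\phi]_\alpha\leq C[f]_\alpha$; the lower-order norm is controlled by $\|\phi\|_1\leq C\|f\|_0$ thanks to $\fint\phi=0$ (Poincar\'e, or directly from $\widehat{\phi}_k=-|k|^{-2}\widehat{f}_k$ for $k\neq 0$), so that $\|\phi\|_{2+\alpha}\leq C\|f\|_{\alpha}$. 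For $\Delta\psi=\div F$ with $\fint\psi=0$, I would let $\theta$ solve $\Delta\theta=F$ componentwise with $\fint\theta=0$, so $\|\theta\|_{2+\alpha}\leq C\|F\|_{\alpha}$ by the previous case, and note that $\psi=\div\theta$ (since $\Delta\div\theta=\div F$ and $\fint\div\theta=0$), whence $\|\psi\|_{1+\alpha}\leq\|\theta\|_{2+\alpha}\leq C\|F\|_{\alpha}$.

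Next I would record the H\"older boundedness of $\P$: writing $\P w=w-\nabla\varphi$ with $\Delta\varphi=\div w$ and $\fint\varphi=0$, the second estimate just proved (with $w$ in the role of $F$) gives $\|\nabla\varphi\|_{m+\alpha}\leq C\|w\|_{m+\alpha}$, hence $\|\P w\|_{m+\alpha}\leq C\|w\|_{m+\alpha}$ for every $m$ and $\alpha$. For \eqref{e:Schauder_R}, given $v\in C^{m+\alpha}$ I would take $u$ with $\Delta u=v-\fint v$, $\fint u=0$, so $\|u\|_{m+2+\alpha}\leq C\|v\|_{m+\alpha}$ by the first estimate and $\|\P u\|_{m+2+\alpha}\leq C\|v\|_{m+\alpha}$ by boundedness of $\P$; since each summand in
\[
\RR v=\tfrac14\big(D\P u+(D\P u)^T\big)+\tfrac34\big(Du+(Du)^T\big)-\tfrac12(\div u)\Id
\]
is a single spatial derivative of $\P u$ or of $u$, I get $\|\RR v\|_{m+1+\alpha}\leq C\big(\|\P u\|_{m+2+\alpha}+\|u\|_{m+2+\alpha}\big)\leq C\|v\|_{m+\alpha}$. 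For \eqref{e:Schauder_Rdiv} I would instead take $v=\div A$ with $A\in C^{m+\alpha}$ matrix-valued: then $\fint v=0$ and $u$ solves $\Delta u_i=\div(A_{i\cdot})$ row by row, so the second estimate of the first step gives $\|u\|_{m+1+\alpha}\leq C\|A\|_{m+\alpha}$ and then $\|\P u\|_{m+1+\alpha}\leq C\|A\|_{m+\alpha}$; as $\RR(\div A)$ is again a sum of first derivatives of $u$ and $\P u$, this yields $\|\RR(\div A)\|_{m+\alpha}\leq C\|A\|_{m+\alpha}$.

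There is no genuine obstacle here — it is textbook elliptic regularity — and the only point I would take care over is that one needs the \emph{full} two-derivative H\"older gain of $\Delta^{-1}$ on $\T^3$ (the true Schauder estimate), not merely the Calder\'on--Zygmund $L^p$ bound on $D^2\Delta^{-1}$, which would cost an $\varepsilon$; that is exactly why one invokes the interior Schauder estimate (or the mapping properties of Riesz-type singular integral operators on $C^{m+\alpha}$) rather than an $L^p$ argument.
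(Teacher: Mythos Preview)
Your proposal is correct and matches the paper's treatment: the paper does not give a proof but simply records the statement as a consequence of classical Schauder theory and cites \cite[Proposition 5.1]{DS-Inv}, exactly as you propose to do. The additional sketch you supply (reduce to $m=0$ by differentiating, invoke interior Schauder/Gilbarg--Trudinger plus Poincar\'e for \eqref{e:GT_laplace}, deduce H\"older boundedness of $\P$, and read off \eqref{e:Schauder_R}--\eqref{e:Schauder_Rdiv} from the explicit formula for $\RR$) is a sound and standard way to flesh it out, and goes beyond what the paper itself provides.
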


\section{Stationary phase and commutator lemmas}\label{s:stationary}

Finally, we will need the following stationary phase lemma (for a proof see \cite{DS-Inv}) and a useful commutator estimate (for a proof see \cite{BDIS}). 

\begin{proposition}\label{p:stat_phase}
(i) Let $k\in\Z^3\setminus\{0\}$ and $\lambda\geq 1$ be fixed. 
For any $a\in C^{\infty}(\T^3)$ and $m\in\N$ we have
\begin{equation}\label{e:average}
\left|\int_{\T^3}a(x)e^{i\lambda k\cdot x}\,dx\right|\leq \frac{[a]_m}{\lambda^m}.
\end{equation}

(ii) Let $k\in\Z^3\setminus\{0\}$ be fixed. For a smooth vector field $a\in C^{\infty}(\T^3;\R^3)$ let 
$F(x):=a(x)e^{i\lambda k\cdot x}$. Then we have
\begin{equation}\label{e:R(F)}
\|\RR(F)\|_{\alpha}\leq \frac{C}{\lambda^{1-\alpha}}\|a\|_0+\frac{C}{\lambda^{m-\alpha}}[a]_m+\frac{C}{\lambda^m}[a]_{m+\alpha},
\end{equation}
where $C=C(\alpha,m)$.
\end{proposition}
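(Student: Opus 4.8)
The plan is to treat the two parts separately: part (i) is an elementary integration by parts, while part (ii) is the standard parametrix (geometric–series) estimate for $\RR$ acting on a single modulated Fourier mode. Throughout I take $\lambda\in\N$, as in all the applications, so that $a(x)e^{i\lambda k\cdot x}$ is a bona fide element of $C^\infty(\T^3)$.

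For part (i), I would integrate by parts repeatedly. Pick a coordinate $j$ with $k_j\neq 0$, hence $|k_j|\ge 1$; since $\partial_{x_j}e^{i\lambda k\cdot x}=i\lambda k_j\,e^{i\lambda k\cdot x}$, one integration by parts in $x_j$ (no boundary term, by periodicity) moves one derivative off the exponential and onto $a$ at the cost of a factor $(i\lambda k_j)^{-1}$. Iterating $m$ times and using $|k_j|\ge 1$ together with $\|\partial_{x_j}^m a\|_0\le[a]_m$ gives \eqref{e:average} at once.

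For part (ii), the key structural point is that $\RR$ factors as $\RR=T\circ\Delta^{-1}$, where $\Delta^{-1}$ is the solution operator of $\Delta u=v-\fint v$, $\fint u=0$, and $T$ is a fixed matrix of first-order constant-coefficient operators composed with the Leray projector $\P=\Id-\nabla\Delta^{-1}\div$. Since $\P$ is a zeroth-order Fourier multiplier, bounded on $C^r$ for every non-integer $r>0$, one has $\|\RR(F)\|_\alpha\le C\|\Delta^{-1}F\|_{1+\alpha}$ for $F=a\,e^{i\lambda k\cdot x}$, and the whole statement reduces to proving
\[
\|\Delta^{-1}(a\,e^{i\lambda k\cdot x})\|_{1+\alpha}\le \frac{C}{\lambda^{1-\alpha}}\|a\|_0+\frac{C}{\lambda^{m-\alpha}}[a]_m+\frac{C}{\lambda^{m}}[a]_{m+\alpha}\,.
\]
To obtain this I would use the parametrix construction. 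Conjugating the Laplacian yields the identity $\Delta(b\,e^{i\lambda k\cdot x})=(\Delta b+2i\lambda\,k\cdot\nabla b-\lambda^2|k|^2 b)\,e^{i\lambda k\cdot x}$, so solving $\Delta(b\,e^{i\lambda k\cdot x})=a\,e^{i\lambda k\cdot x}$ is the same as inverting $\Id-S$ on $-a/(\lambda^2|k|^2)$, where $S:=(\lambda^2|k|^2)^{-1}(\Delta+2i\lambda\,k\cdot\nabla)$. For large $\lambda$ the operator $S$ is ``small'': each application gains a factor $\lambda^{-1}$ at the cost of one spatial derivative (the $\lambda^{-1}\nabla$-branch dominating the $\lambda^{-2}\Delta$-branch). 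Truncating the Neumann series after $m$ terms, $b^{(m)}:=-\sum_{n=0}^{m-1}S^n\bigl(a/(\lambda^2|k|^2)\bigr)$, the identity gives $\Delta(b^{(m)}\,e^{i\lambda k\cdot x})=(a-S^m a)\,e^{i\lambda k\cdot x}$, so that, up to an additive constant fixing the mean (the mean of $a\,e^{i\lambda k\cdot x}$ being $O(\lambda^{-m}[a]_m)$ by part (i)),
\[
\Delta^{-1}(a\,e^{i\lambda k\cdot x})=b^{(m)}\,e^{i\lambda k\cdot x}+\Delta^{-1}\bigl((S^m a)\,e^{i\lambda k\cdot x}\bigr)\,.
\]
I would then estimate the principal term directly — $\|b^{(m)}\|_0\le C\lambda^{-2}\|a\|_0$ plus faster-decaying multiples of $[a]_n$ for $1\le n\le m-1$, and since each spatial derivative of $b^{(m)}\,e^{i\lambda k\cdot x}$ costs at most a factor $\lambda$, one gets $[b^{(m)}\,e^{i\lambda k\cdot x}]_{1+\alpha}\le C\lambda^{\alpha-1}\|a\|_0+(\text{lower order})$ — and the remainder via the Schauder estimate of Proposition \ref{p:GT}: $\|\Delta^{-1}((S^m a)\,e^{i\lambda k\cdot x})\|_{1+\alpha}\le C\|(S^m a)\,e^{i\lambda k\cdot x}\|_\alpha\le C\lambda^\alpha\|S^m a\|_0+C[S^m a]_\alpha$, where $\|S^m a\|_0\le C\lambda^{-m}[a]_m$ and $[S^m a]_\alpha\le C\lambda^{-m}[a]_{m+\alpha}$ (again up to faster-decaying pieces). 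Adding the two contributions, and absorbing every intermediate $\lambda^{\alpha-1-n}[a]_n$-type term into the three displayed terms by one interpolation between $\|a\|_0$ and $[a]_{m+\alpha}$, gives the displayed bound and hence \eqref{e:R(F)}. All powers of $|k|$ occurring along the way are harmless, since $|k|\ge 1$ and both $\RR$ and $\Delta^{-1}$ divide by powers of $|\lambda k|$, so the constant depends only on $\alpha$ and $m$.

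The step I expect to be the main obstacle is precisely this last bookkeeping: tracking, through the $m$ iterations of $S$ and in the $C^{1+\alpha}$ estimates of $b^{(m)}\,e^{i\lambda k\cdot x}$ and of the $S^m$-remainder, that every intermediate term of the form $\lambda^{\alpha-1-n}[a]_n$ is genuinely dominated — after one interpolation — by $\lambda^{\alpha-1}\|a\|_0$, $\lambda^{\alpha-m}[a]_m$ and $\lambda^{-m}[a]_{m+\alpha}$. A secondary point worth flagging is that $\P$, and hence $T$, is bounded on $C^r$ only for non-integer $r$; this is exactly why the conclusion is stated in $C^\alpha$ rather than $C^0$, and why for the remainder one genuinely needs the two-derivative Schauder gain rather than merely the division by $|\lambda k|^2$. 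Everything else is routine stationary-phase bookkeeping.
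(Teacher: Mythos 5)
Part (i) is fine: iterating the integration by parts in a coordinate with $k_j\neq 0$ (so $|k_j|\ge 1$) produces exactly the stated bound, and periodicity kills the boundary terms. The paper itself does not reprove this proposition but refers to \cite{DS-Inv}, so there is no internal proof to compare against; the standard argument there is however a parametrix/iteration argument very much in the spirit of what you sketch for part (ii). The problem is in the bookkeeping, and it is more than routine: the specific parametrix you chose does not close.

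Concretely, you set $S=(\lambda^2|k|^2)^{-1}(\Delta+2i\lambda\,k\cdot\nabla)=\lambda^{-2}T_2+\lambda^{-1}T_1$ with $T_1$ of order $1$ and $T_2$ of order $2$, and truncate the Neumann series after $m$ terms, so the remainder is $S^m a$. But
\[
S^m a=\sum_{j=0}^{m}\binom{m}{j}\lambda^{-m-j}\,T_1^{m-j}T_2^{j}a\,,
\]
and $T_1^{m-j}T_2^{j}a$ is a derivative of $a$ of order $m+j$. Hence $\|S^m a\|_0$ is controlled by $\sum_{j=0}^m\lambda^{-m-j}[a]_{m+j}$, i.e.\ it genuinely involves $[a]_{m+1},\dots,[a]_{2m}$. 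These terms do have extra negative powers of $\lambda$ (they are the pieces you call ``faster-decaying''), but that does not help here: the quantity you must produce on the right-hand side is a combination of $\|a\|_0$, $[a]_m$ and $[a]_{m+\alpha}$ only, and $[a]_p$ for $p>m+\alpha$ is \emph{not} bounded by any combination of those by interpolation, no matter what power of $\lambda$ multiplies it. The same issue reappears in $[S^m a]_\alpha$. So as written the argument proves a weaker inequality involving $[a]_{2m}$ and $[a]_{2m+\alpha}$, not \eqref{e:R(F)}.

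The fix is to iterate an identity that gains exactly one derivative of $a$ per power of $\lambda^{-1}$, rather than iterating the full operator $S$. The cleanest route is to use $\RR$ directly rather than $\Delta^{-1}$: pick $j$ with $k_j\neq 0$, write
\[
a\,e^{i\lambda k\cdot x}=\frac{1}{i\lambda k_j}\,\partial_j\!\bigl(a\,e^{i\lambda k\cdot x}\bigr)-\frac{1}{i\lambda k_j}\,(\partial_j a)\,e^{i\lambda k\cdot x}\,,
\]
apply $\RR$, and note that $\RR$ commutes with $\partial_j$ while $\|\partial_j\RR v\|_\alpha\le\|\RR v\|_{1+\alpha}\le C\|v\|_\alpha$ by \eqref{e:Schauder_R}. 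Iterating $m$ times gives
\[
\|\RR(a\,e^{i\lambda k\cdot x})\|_\alpha\le C\sum_{n=0}^{m-1}\frac{\lambda^\alpha[a]_n+[a]_{n+\alpha}}{\lambda^{n+1}}+\frac{C}{\lambda^m}\bigl(\lambda^\alpha[a]_m+[a]_{m+\alpha}\bigr)\,,
\]
where now only derivatives of $a$ up to order $m$ appear. The $n=0$ piece gives $\lambda^{\alpha-1}\|a\|_0$, the last term gives $\lambda^{\alpha-m}[a]_m+\lambda^{-m}[a]_{m+\alpha}$, and every intermediate term $\lambda^{\alpha-n-1}[a]_n$ or $\lambda^{-n-1}[a]_{n+\alpha}$ (with $0<n<m$, resp.\ $0\le n<m$) is absorbed into the three extremal terms by the interpolation inequality \eqref{e:Holderinterpolation2} followed by Young's inequality, because here the exponent bookkeeping does close. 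Your part (ii) should be repaired along these lines; the idea of the parametrix is right, but the operator you chose to iterate overshoots the regularity budget.
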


\begin{proposition}\label{p:commutator}
Let $k\in\Z^3\setminus\{0\}$ be fixed. For any smooth vector field $a\in C^\infty  (\T^3;\R^3)$ and any smooth function $b$, if we set $F(x):=a(x)e^{i\lambda k\cdot x}$, we then have
\begin{align}
&\|[b, \mathcal{R}] (F)\|_\alpha \leq  \lambda^{\alpha-2} \|a\|_0 \|b\|_1
+ C \lambda^{\alpha-m} \left(\|a\|_{m-1+\alpha} \|b\|_{1+\alpha} + \|a\|_{\alpha} \|b\|_{m+\alpha}\right)\label{e:main_est_commutator}
\end{align}
where $C=C(\alpha,m)$.
\end{proposition}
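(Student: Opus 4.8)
The plan is to combine a kernel representation of $\mathcal{R}$ with the elementary observation that $b(x)-b(y)$ vanishes to first order on the diagonal, thereby gaining one extra power of $\lambda^{-1}$ over the naive bound. We first write $[b,\mathcal{R}](F)=b\,\mathcal{R}(F)-\mathcal{R}(bF)$ and note that $bF=(ab)\,e^{i\lambda k\cdot x}$ is again a single Fourier mode with carrier frequency $\lambda k$, so that $\mathcal{R}(bF)$ can be analysed by the same stationary phase expansion as $\mathcal{R}(F)$; applying Proposition~\ref{p:stat_phase}(ii) to the two terms separately only gives a bound of order $\lambda^{\alpha-1}$, and the whole point is to do better.

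Recall that in the definition of $\mathcal{R}$ the pieces $Du$, $D\mathcal{P}u$ and $(\div u)\,\Id$ are Fourier multipliers of degree $-1$ (the Leray projection $\mathcal{P}$ having degree $0$ and $\Delta^{-1}$ degree $-2$), so that $\mathcal{R}$ is, modulo a smooth periodic remainder, a periodised convolution against a kernel $K$ homogeneous of degree $-2$ on $\R^3\setminus\{0\}$. Consequently, up to harmless contributions of the zero Fourier mode,
\[
[b,\mathcal{R}](F)(x)=\int_{\T^3}K(x-y)\,\bigl(b(x)-b(y)\bigr)\,F(y)\,dy\, .
\]
Taylor expanding $b(y)$ at $y=x$ we write $b(x)-b(y)=-\nabla b(x)\cdot(y-x)+\tfrac12\,(x-y)^{T}(\nabla^{2}b)(\xi)(x-y)$. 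The first term produces $\nabla b(x)\cdot\widetilde{\mathcal{R}}(F)(x)$, where $\widetilde{\mathcal{R}}$ is convolution against $z\mapsto K(z)\,z$, a kernel homogeneous of degree $-1$, i.e. an operator of order $-2$; evaluating on $F=a\,e^{i\lambda k\cdot x}$ by stationary phase gives $\|\widetilde{\mathcal{R}}(F)\|_0\le C\lambda^{-2}\|a\|_0$ (the remaining pieces decaying faster), and the product rule \eqref{e:Holderproduct} together with the elementary inequality $\|c\,e^{i\lambda k\cdot x}\|_\alpha\le C(\lambda^{\alpha}\|c\|_0+\|c\|_\alpha)$ turns $\nabla b\cdot\widetilde{\mathcal{R}}(F)$ into the leading contribution $C\lambda^{\alpha-2}\|a\|_0\|b\|_1$. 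In the Fourier picture the same gain appears as the exact cancellation of the degree $-1$ symbol $\widehat{\mathcal{R}}(\lambda k)$, a constant matrix, between $b\,\mathcal{R}(F)$ and $\mathcal{R}(bF)$.

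It remains to expand to higher order. Iterating the Taylor expansion of $b$ to order $m$ and correspondingly expanding the amplitudes $z\mapsto K(z)\,z^{\beta}$, the remainders carry $m$ derivatives, which the Leibniz rule distributes between $a$ and $b$ in the two patterns appearing in the statement; each extra factor $(x-y)$, respectively each extra derivative falling on the oscillatory amplitude, is converted into a power of $\lambda^{-1}$ by the stationary phase lemma, and then \eqref{e:Holderproduct} and the interpolation inequalities \eqref{e:Holderinterpolation2} produce exactly $C\lambda^{\alpha-m}\bigl(\|a\|_{m-1+\alpha}\|b\|_{1+\alpha}+\|a\|_\alpha\|b\|_{m+\alpha}\bigr)$. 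The main obstacle I anticipate is purely one of bookkeeping: one has to organise the Leibniz expansion so that at every order the derivatives land on $a$ and $b$ precisely in the two prescribed ways, and one has to deal with the fact that $\mathcal{R}$ is not literally a convolution operator (the mean-value subtractions, the degree $0$ Leray projection $\mathcal{P}$, and the periodic setting), which is done by splitting $\mathcal{R}$ into its constituent classical singular integral operators and running the scheme above on each. We refer to \cite{BDIS} for the complete argument.
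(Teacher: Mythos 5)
The paper itself does not prove Proposition~\ref{p:commutator}; it simply points the reader to \cite{BDIS}, and your sketch ultimately does the same. Your outline does capture the one idea that matters — the commutator gains one power of $\lambda^{-1}$ over the naive bound, which appears either as the cancellation of the constant leading symbol $\widehat{\mathcal R}(\lambda k)$ between $b\,\mathcal R(F)$ and $\mathcal R(bF)$, or equivalently as the first-order vanishing of $b(x)-b(y)$ in the kernel picture — and your degree count for $K$ and $K(z)z$ is right.

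Where you diverge from the cited argument is in the choice of picture. The proof in \cite{BDIS} (which the present paper leans on) stays entirely on the Fourier side: $\mathcal R$ is a finite combination of Fourier multipliers with symbols homogeneous of degree $-1$, one Taylor-expands each symbol around the carrier frequency $\lambda k$ to obtain the asymptotic expansion of Proposition~\ref{p:stat_phase}(ii), and the commutator gain is read off from the cancellation of the $j=0$ term. This avoids ever discussing the real-space kernel, the periodization, or the Leray projector $\mathcal P$ as a singular integral — precisely the issues you flag as the ``main obstacle.'' Your real-space (Constantin–E–Titi style) route, writing the commutator as $\int K(x-y)(b(x)-b(y))F(y)\,dy$ and Taylor-expanding $b$, is a legitimate alternative, but it obliges you to justify that $\mathcal R$ on $\T^3$ is a periodized homogeneous convolution modulo smoothing, and that $\mathcal P$ and the mean-value subtractions in the definition of $\mathcal R$ contribute only harmless terms. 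These are real (if routine) extra steps; the Fourier-multiplier route sidesteps them. Since both you and the paper defer to \cite{BDIS} for the full bookkeeping, the sketch is acceptable, but you should be aware that the cited source runs the argument in a cleaner coordinate system than the one you have chosen.
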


\bibliographystyle{plain}
\bibliography{NS-alfa}

\begin{thebibliography}{10}

\bibitem{Buck}
T.~Buckmaster.
\newblock Onsager's conjecture almost everywhere in time.
\newblock {\em Communications in Mathematical Physics}, 333(3):1175--1198,
  2015.

\bibitem{BDIS}
T.~Buckmaster, C.~De~Lellis, P.~Isett, and L.~Sz{\'e}kelyhidi~Jr.
\newblock Anomalous dissipation for {1/5}-holder {E}uler flows.
\newblock {\em Annals of Mathematics}, 182(1):127--172, 2015.

\bibitem{BDS}
T.~Buckmaster, C.~De~Lellis, and L.~Sz\'ekelyhidi, Jr.
\newblock Dissipative {E}uler flows with {O}nsager-critical spatial regularity.
\newblock {\em Comm. Pure Appl. Math.}, 69(9):1613--1670, 2016.

\bibitem{BDSV}
T.~{Buckmaster}, C.~{De Lellis}, L.~{Sz{\'e}kelyhidi}, Jr., and V.~{Vicol}.
\newblock {Onsager's conjecture for admissible weak solutions}.
\newblock {\em ArXiv e-prints}, January 2017.

\bibitem{BSV}
T.~{Buckmaster}, S.~{Shkoller}, and V.~{Vicol}.
\newblock {Nonuniqueness of weak solutions to the SQG equation}.
\newblock {\em ArXiv e-prints}, October 2016.

\bibitem{CET}
P.~Constantin, W.~E, and E.S. Titi.
\newblock Onsager's conjecture on the energy conservation for solutions of
  {E}uler's equation.
\newblock {\em Comm. Math. Phys.}, 165(1):207--209, 1994.

\bibitem{CDSz}
S.~Conti, C.~De~Lellis, and L.~Sz{{\'e}}kelyhidi~Jr.
\newblock $h$-principle and rigidity for {$C^{1,\alpha}$} isometric embeddings.
\newblock In {\em Nonlinear partial differential equations}, pages 83--116.
  Springer, 2012.

\bibitem{Da}
S.~Daneri.
\newblock Cauchy problem for dissipative {H}\"older solutions to the
  incompressible {E}uler equations.
\newblock {\em Comm. Math. Phys.}, 329(2):745--786, 2014.

\bibitem{DaSz}
S.~Daneri and L.~Sz{\'e}kelyhidi~Jr.
\newblock Non-uniqueness and h-principle for h{\"o}lder-continuous weak
  solutions of the euler equations.
\newblock {\em Archive for Rational Mechanics and Analysis}, 224(2):471--514,
  2017.

\bibitem{DS-Inv}
C.~De~Lellis and L.~Sz{{\'e}}kelyhidi, Jr.
\newblock Dissipative continuous {E}uler flows.
\newblock {\em Invent. Math.}, 193(2):377--407, 2013.

\bibitem{DS-JEMS}
C.~De~Lellis and L.~Sz\'ekelyhidi, Jr.
\newblock Dissipative {E}uler flows and {O}nsager's conjecture.
\newblock {\em J. Eur. Math. Soc. (JEMS)}, 16(7):1467--1505, 2014.

\bibitem{DR}
S.~De~Rosa.
\newblock In preparation.

\bibitem{Is2013}
P.~Isett.
\newblock {\em Holder continuous {E}uler flows with compact support in time}.
\newblock ProQuest LLC, Ann Arbor, MI, 2013.
\newblock Thesis (Ph.D.)--Princeton University.

\bibitem{Is2016}
P.~{Isett}.
\newblock {A Proof of Onsager's Conjecture}.
\newblock {\em ArXiv e-prints}, August 2016.

\bibitem{IV}
P.~Isett and V.~Vicol.
\newblock H\"older continuous solutions of active scalar equations.
\newblock {\em Annals of PDE}, 1(1):1--77, 2015.

\bibitem{JS-Inv}
H.~Jia and V.~Sverak.
\newblock Local-in-space estimates near initial time for weak solutions of the
  {N}avier-{S}tokes equations and forward self-similar solutions.
\newblock {\em Invent. Math.}, 196(1):233--265, 2014.

\bibitem{JS}
H.~Jia and V.~Sverak.
\newblock Are the incompressible 3d {N}avier-{S}tokes equations locally
  ill-posed in the natural energy space?
\newblock {\em J. Funct. Anal.}, 268(12):3734--3766, 2015.

\bibitem{Roncal}
L.~Roncal and P.~R. Stinga.
\newblock Fractional {L}aplacian on the torus.
\newblock {\em Commun. Contemp. Math.}, 18(3):1550033, 26, 2016.

\end{thebibliography}
\end{document}